\newtheorem{theorem}{Theorem}[section]
\newtheorem{lemma}[theorem]{Lemma}
\newtheorem{prop}[theorem]{Proposition}
\newtheorem{cor}[theorem]{Corollary}
\newtheorem{ex}[theorem]{Example}
\newtheorem{defn}[theorem]{Definition}
\newtheorem{rem}[theorem]{Remark}
\newtheorem{question}[theorem]{Question}
\newcommand{\z}{{\mathbb Z}}
\begin{document}

\title{Relative Subgroup Growth and Subgroup Distortion}
\author{Tara C. Davis, Alexander Yu. Olshanskii}
\maketitle

\abstract{We study the relative growth of finitely generated subgroups in finitely generated groups, and the corresponding distortion function of the embeddings. We explore which functions are equivalent to the relative growth functions and distortion functions of finitely generted subgroups. We also study the connections between these two asymptotic invariants of group embeddings. We give conditions under which a length function on a finitely generated group can be extended to a length function on a larger group.}

\section{Introduction and Background}

We will use the notation that for a group $G$ with finite generating set $X$, and for an element $g \in G$, then $|g|_X$ represents the word length of the element $g$ with respect to the generating set $X$. For this paper, $\mathbb{N} = \{1,2, \dots,\}$. For $r \in \mathbb{N}$ we use the notation that $B_G(r)$ represents the ball in $G$ centered at the identity with radius $r$. That is, $B_G(r) = \{ g \in G : |g|_X \leq r\}$.

\begin{defn}
Let $G$ be a group finitely generated by a set $X$ with any subset $H$. The growth function of $H$ in $G$ is $$g_H^G: \mathbb{N} \rightarrow \mathbb{N}: r \rightarrow \# \{ h \in H: |h|_X \leq r\}=\#(B_G(r) \cap H).$$
\end{defn}

The growth functions of subsets and other functions of this type become asymptotic invariants independent of the choice of a finite set $X$ if one takes them up to a certain equivalence $\sim$ (see the definitions of $\sim$, $\preceq$, $\succeq$ and $\approx$ in Section \ref{three}.) 

The growth of a subgroup $H$ with respect to a finite set of generators of a bigger group $G$ is called the {\it relative growth} of $H$ in $G$ and denoted by $g_H^G$. In particular, $g_G(r)= g_G^G$ is the usual growth function of the entire finitely generated group $G$.

The relative growth of subgroups, as an asymptotic invariant independent on the choice of generating sets, was studied by Osin in \cite{osin}. He provided a description of relative growth functions of cyclic subgroups in solvable groups, up to a rougher (so called polynomial, based on inequalities of the form $f(r)\le \phi_1(g(\phi_2(r)))$ for some polynomials $\phi_1, \phi_2$ depending on the functions $f$ and $g$) equivalence relation than the ones we employ, $\sim$ and $\approx$. He also provided a negative example to the following question attributed to A.Lubotzky by Grigorchuk and De La Harpe (\cite{gdlh}): Is it true that the relative growth functions of subgroups in solvable groups (linear groups) are either polynomial or exponential? However, the relative growth of any finitely generated subgroup of a free solvable groups is either exponential or polynomial. Moreover, it was shown in \cite{osins} that the relative growth of any subgroup of a policyclic group is either exponential or at most polynomial.

\begin{defn}\label{distortion}(\it{cf.} \cite{gromov2})
Let $H$ be a subgroup of a group $G$, where $H$ and $G$ are generated by finite sets $Y$ and $X$, respectively. Then the distortion function of $H$ in $G$ is defined as $$\Delta^{G}_{H} : \mathbb{N} \rightarrow \mathbb{N} : r \mapsto \max \{ |h|_Y : h \in H, |h|_X \leq r \}.$$ 
\end{defn}

The distortion function does not depend on the choice of finite generating sets $X$ and $Y$, so long as it is considered only up to the equivalence of Definition \ref{equivalence}. Moreover, if the subgroup $H$ is infinite, then the distortion function of $H$ in $G$ is at least linear up to equivalence.

\begin{defn}
The finitely generated subgroup $H$ of the finitely generated group $G$ is said to be undistorted if $\Delta^{G}_{H}(r) \approx r$.
\end{defn}

If a subgroup $H$ is not undistorted, then it is said to be distorted, and its distortion refers to the equivalence class of $\Delta_H^G(r)$. 

\begin{ex}\label{pok} (\it{cf.} \cite{gromov2})
\begin{enumerate}
\item Consider the three-dimensional Heisenberg group $$\mathcal{H}^3 = \langle a, b, c | c=[a,b], [a,c]=[b,c] = 1 \rangle.$$ Consider the cyclic subgroup $H = \langle c \rangle_{\infty}$. Then $H$ is distorted and has quadratic distortion.

\item Consider the Baumslag-Solitar Group BS$(1,2) = \langle a, b | b a b^{-1} = a^2 \rangle$. 
It has infinite cyclic subgroup $\langle a \rangle$ with exponential distortion. 

\end{enumerate}
\end{ex}

\section{Summary of Main Results}
In Section \ref{ekh} we prove the following result which classifies which functions are equivalent to relative growth functions of infinite cyclic subgroups in finitely generated groups.

Recall that for a function $g: \mathbb{N} \rightarrow \mathbb{N}$ we say that $g$ is superadditive if $g(r+s) \geq g(r) + g(s),$ and that $g$ is subadditive if $g(r+s) \leq g(r) + g(s)$, for all $r,s$.

\begin{theorem}\label{2.2}
\begin{enumerate}
\item If $H$ is a non-locally-finite subgroup of a finitely generated group $G$, then the function $g_H^G$ is $\sim$-equivalent to a superadditive function $f: \mathbb{N} \rightarrow \mathbb{N}$.
\item If $H$ is a countably infinite and locally finite group, then there is an embedding of $H$ into a finitely generated group $G$ so that $g_H^G$ is not equivalent to any nonzero superadditive  function.
\item Let $H$ be infinite cyclic, and let $f: \mathbb{N} \rightarrow \mathbb{N}$ be any non-zero superadditive  function of at most exponential growth; that is, $f(r) \preceq 2^r$. Then there exists a finitely generated (solvable) group $G$ and an embedding of $H$ into $G$ such that the relative growth function of the embedding is equivalent to the function $f$.
\end{enumerate}
\end{theorem}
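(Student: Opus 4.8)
The plan is to prove part (3) by constructing $G$ explicitly as a matrix group or wreath-product-like object that encodes the function $f$ into its geometry. I would start from the observation that a non-zero superadditive function $f$ with $f(r)\preceq 2^r$ has a well-defined exponential-type ``inverse rate'': since $f$ is superadditive and nondecreasing (after replacing it by $\tilde f(r)=\max_{s\le r} f(s)$, which is still superadditive and $\sim$-equivalent), one can track the sequence $r_n$ at which $f$ first reaches $n$ (or reaches $2^n$), and the gaps $r_{n+1}-r_n$ are what we must realize as the cost, in the ambient word metric, of moving the generator of $H$ from ``level $n$'' to ``level $n+1$''. The target is that the ball of radius $r$ in $G$ meets $H=\langle t\rangle$ in roughly $f(r)$ elements, i.e. $t^k\in B_G(r)$ iff $k\lesssim f(r)$, equivalently $|t^k|_X\approx f^{-1}(k)$ in the appropriate generalized-inverse sense. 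So what we really need is a finitely generated solvable group containing an infinite cyclic subgroup whose distortion function is $\approx f^{-1}$, and then relative growth $\approx f$ follows; indeed there is a general ``inverse'' relationship between the distortion of a cyclic subgroup and its relative growth that I would isolate as a preliminary lemma (if $H$ is infinite cyclic then $g_H^G(r)\sim \Delta^G_H{}^{-1}(r)$ up to the usual care with generalized inverses).

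Concretely, I would build $G$ as an iterated construction over the ``ruler'' given by the gaps $d_n = r_{n+1}-r_n$. A clean device is a metabelian group of the form $A\rtimes F$ where $F$ is free (or free abelian of rank $2$) acting on an abelian group $A$ (e.g. a sum or product of copies of $\mathbb Z$ or of $\mathbb Z/p\mathbb Z$) in such a way that a distinguished element $a\in A$ can be ``shrunk'' to have small length by conjugating by a word in $F$ whose length is prescribed by the sequence $d_n$: this is exactly the BS$(1,2)$ trick of Example \ref{pok}(2) but with the conjugating word lengthened at a controlled, possibly non-geometric rate so that the distortion is $f^{-1}$ rather than $\log$. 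One must be careful that $a$ generates an \emph{infinite} cyclic subgroup; using $A=\mathbb Z[1/2]$ with the doubling action, or better a direct sum over $n$ of $\mathbb Z[1/2]$'s each compressed at its own rate, lets one superimpose the compression schedule dictated by $f$. Throughout, the superadditivity of $f$ is what guarantees the schedule $\{d_n\}$ is ``consistent'' — it forces $f^{-1}$ to be subadditive-ish / the gaps to behave monotonically enough — and the bound $f(r)\preceq 2^r$ is exactly the constraint that the $d_n$ do not shrink below a constant, i.e. that no more than exponentially many powers of $t$ fit in a ball, which is automatic for a cyclic subgroup of a finitely generated group and hence is the natural ceiling.

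The two estimates I need are: (upper bound on relative growth) the word length in $G$ of $t^k$ is at least $f^{-1}(k)$, which comes from a normal-form / Magnus-embedding argument bounding how much compression one conjugation ``layer'' can buy in terms of the chosen schedule; and (lower bound on relative growth) exhibiting explicit short words in $X$ equal to $t^k$ for every $k\le f(r)$, which is the straightforward direction once the compressing generators are in place. I would prove these by choosing $X$ to consist of the $F$-generators together with $a$, writing a general element of $G$ in the semidirect-product normal form, and checking that the ``abelian part'' $a^k$ can have $X$-length at most $r$ only if the $F$-part involved in its expression spells out enough of the schedule to reach level $\approx k$; the length of that $F$-part is then $\gtrsim f^{-1}(k)$ by construction. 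Finally I would invoke the preliminary inverse lemma to pass from the distortion statement $\Delta^G_H(r)\approx f^{-1}(r)$ to the desired $g_H^G\sim f$, and note solvability is clear since $G$ is (by construction) metabelian, which is stronger than the asserted solvability.

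I expect the main obstacle to be the \textbf{bookkeeping of generalized inverses and the faithful encoding of an arbitrary superadditive $f$} — as opposed to a nice closed-form $f$ — into a finitely presented compression schedule: one cannot simply pick relations $b_n a b_n^{-1} = a^{?}$ tuned to each $n$ without making the group infinitely generated, so the real work is to spread the (possibly very irregular) sequence $d_n$ over finitely many generators via a single recursive action, while keeping both length estimates tight enough that the errors stay inside the $\sim$-equivalence class. Handling the locally-finite-versus-not dichotomy is not an issue here since $H=\mathbb Z$, but the interplay between superadditivity of $f$ and the monotonicity/consistency one needs from $d_n$ will require a careful preliminary normalization of $f$, which I would do first.
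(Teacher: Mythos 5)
Your reduction for Part (3) is the same as the paper's: normalize $f$, pass to the generalized inverse $l(k)=\min\{m: f(m)\ge |k|\}$ viewed as a length function on $\mathbb{Z}$, use superadditivity of $f$ to get subadditivity of $l$ and the bound $f\preceq 2^r$ to get the exponential counting condition, and then realize $l$ as $|t^k|_X$ up to $\Theta$-equivalence. But there is a genuine gap at exactly the point you flag as ``the real work'': the paper does not build $G$ by hand; it invokes Theorem \ref{olshd}/\ref{olshc} (Olshanskii's embedding theorem, with the solvable refinement of Theorem \ref{olshsolv}), which says that \emph{any} function on $\mathbb{Z}$ satisfying the $(C)$ condition is $\Theta$-realized by the restriction of a word metric of a finitely generated solvable overgroup. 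Your proposed substitute --- a BS$(1,2)$-style metabelian semidirect product with a ``compression schedule'' read off from the gaps of $f^{-1}$ --- cannot work for an arbitrary superadditive $f\preceq 2^r$: such an $f$ need not be recursive or have any regularity, the explicit $\mathbb{Z}[1/2]$-type actions you describe only produce very particular distortion profiles, and you yourself note that tuning a relation to each level $n$ destroys finite generation. Since you neither carry out nor cite a mechanism that encodes an arbitrary irregular schedule into finitely many generators, the existence of $G$ is not established; the proof is incomplete without appealing to (or reproving) the embedding theorem.

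Two further points. First, your ``preliminary inverse lemma'' ($g_H^G\sim \Delta_H^G{}^{-1}$ for cyclic $H$) is false as a general statement: only $g_H^G\preceq \Delta_H^G$ holds (Lemma \ref{ygs}), and the gap can be enormous --- the paper's Theorem \ref{cm} Part (3) exhibits a cyclic subgroup with relative growth $o(r^2)$ whose distortion is not bounded by any recursive function. (You do not actually need this lemma, since in your setup you control the length function $k\mapsto|t^k|_X$ directly and it is monotone in $|k|$, which is what makes the count $\#\{k: |t^k|_X\le r\}\approx f(r)$ go through; but as stated the lemma is wrong, and you also conflate the distortion function with the length function of powers, which are generalized inverses of one another.) Second, the theorem has three parts; Parts (1) (superadditive closure of $g_H^G$ via packing disjoint balls centered along a suitable sequence in an infinite finitely generated subgroup) and (2) (the locally finite counterexample) are not addressed at all in your proposal.
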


\begin{rem}
One can observe a discrepancy between the conditions
in the above Theorem \ref{2.2} and in Theorem 1 about polynomial
equivalence from \cite{osin}. Moreover, there are 
functions equivalent to  superadditive ones, which do not
satisfy Condition (a) of Theorem 1 in \cite{osin}. The correction
is easy: In \cite{osin}, one should not require that Condition (a)
holds for the function, but to say that  the relative growth
function is $\sim \mathrm{equivalent}$ to a function satisfying 
Condition (a). This does not affect the other results from \cite{osin}.
\end{rem}

In Section \ref{pdicg} we investigate the following. The paper \cite{olshanskii} gives information about how to obtain various distortions of infinite cyclic subgroups. The authors were asked by Mark Sapir about what all possible distortions for the embeddings of infinite cyclic subgroups are.

\begin{theorem}\label{s12}
\begin{enumerate}
\item Let $H$ be an infinite, finitely generated (solvable) group, and let $f$ be an increasing and superadditive function on $\mathbb{N}$. Then there exists an embedding of $H$ into a finitely generated (solvable) group $G$ so that $\Delta_H^G \approx f$.
\item Any distortion function of an infinite cyclic group is equivalent to a superadditive and increasing function.
\item There is an embedding of a finitely generated (solvable) group $H$ into a finitely generated (solvable) group $G$ so that the distortion function is not equivalent to a superadditive function. Moreover, the distortion is not bounded by a recursive function, but it is bounded by a linear function on an infinite subset of $\mathbb{N}$. 
\item There is a \emph{finitely presented} group $K$ with a distorted finitely generated subgroup $H$ such that the distortion of $H$ is bounded by a linear function on an infinite subset of $\mathbb{N}$ (and so nonequivalent to a superadditive function).
\end{enumerate}
\end{theorem}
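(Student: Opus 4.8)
The statement (4) is a finitely presented refinement of (3), and the plan is to obtain it by pushing the finitely generated ambient group of (3) into a finitely presented one by a distortion‑controlled Higman embedding. Recall first the elementary fact behind the parenthetical assertion in the statement (and already used in proving (3)): by Fekete's lemma, for a superadditive $f\colon\mathbb{N}\to\mathbb{N}$ the ratio $f(n)/n$ tends to $\sup_n f(n)/n$, so a superadditive function that is bounded above by a linear function on an infinite set is in fact $\preceq r$. Hence it suffices to produce a finitely presented group $K$ with a finitely generated subgroup $H$ such that $\Delta^{K}_{H}(r)$ is superlinear for infinitely many $r$, while $\Delta^{K}_{H}(r)\le Cr$ for all $r$ in some infinite $S\subseteq\mathbb{N}$.

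I would take as input a variant of the construction underlying (3): a finitely generated solvable group $G$ with a finitely generated subgroup $H$ whose distortion already oscillates in this way, obtained by switching bounded‑capacity ``compression gadgets'' on and off at controlled but now \emph{recursively, indeed explicitly, computable} scales. The non‑recursive feature needed in (3) plays no role here and is dropped; what one keeps is that $\Delta^{G}_{H}$ is $\le$‑linear on an infinite set $S$ which is a union of intervals $[a_k,b_k)$ with $b_k/a_k\to\infty$, and superlinear on the complementary windows. With the parameters explicitly computable and the solvable structure laid out, $G$ can be arranged so that its word problem is tame --- say, solvable in polynomial time (a fortiori in $\mathrm{NP}$).

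Next apply a quasi‑isometric Higman embedding: a finitely generated group whose word problem is tame enough (e.g.\ in $\mathrm{NP}$) embeds into a finitely presented group $K$ as an \emph{undistorted} subgroup (Olshanskii--Sapir; Birget--Olshanskii--Rips--Sapir; see \cite{olshanskii} and the references there), and we may take the finite generating set of $K$ to contain the generating set $X$ of $G$. Then $\Delta^{K}_{G}(r)\le Cr$ for some constant $C$. For $h\in H$ we have $|h|_K\le|h|_X$, so $B_G(r)\cap H\subseteq B_K(r)\cap H$ and hence $\Delta^{K}_{H}(r)\ge\Delta^{G}_{H}(r)$; since the latter is superlinear for infinitely many $r$, $H$ is distorted in $K$. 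Conversely, if $h\in H$ and $|h|_K\le r$ then $|h|_X\le\Delta^{K}_{G}(r)\le Cr$, whence $|h|_Y\le\Delta^{G}_{H}(Cr)$, so $\Delta^{K}_{H}(r)\le\Delta^{G}_{H}(Cr)$. For $r\in[a_k,b_k/C)$ we have $Cr\in[a_k,b_k)\subseteq S$, so $\Delta^{K}_{H}(r)\le\Delta^{G}_{H}(Cr)\le C'r$; since these intervals are nonempty for large $k$, $\Delta^{K}_{H}$ is bounded by a linear function on an infinite set, and therefore, by the first paragraph, not equivalent to any superadditive function. This yields (4).

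The main obstacle is the first step: one must verify that the mechanism of (3) survives restriction to a recursive, polynomially computable family of compression scales and still produces a solvable group whose word problem is tame enough for the quasi‑isometric Higman embedding to apply, while the compressions remain \emph{clocked} --- active only on the sparse windows and of bounded capacity --- since it is precisely this, and not a Baumslag--Solitar‑type compression active at every scale, that pushes the distortion down to linear on an infinite set. A more self‑contained alternative avoids (3) and builds $K$ directly from an $S$‑machine that turns the compression of the relevant elements on only near the scales $a_k$; this affords sharper control of $\Delta^{K}_{G}$ and $\Delta^{K}_{H}$, at the price of invoking the full Olshanskii--Sapir machinery. In either route one should keep track of the generating‑set compatibility used in the two‑sided length comparisons above, and note that, in contrast with (3), the distortion of $H$ in $K$ will here be a recursive function --- which is consistent with the statement, as (4) asserts only the linear‑on‑an‑infinite‑set property.
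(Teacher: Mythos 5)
Your proposal addresses only Part (4) of the theorem: Parts (1)--(3) are either taken as given or not discussed, so as a proof of the stated theorem it is incomplete. For Part (4) itself your architecture is close in spirit to the paper's --- both reduce the problem to realizing a \emph{computable} length function inside a finitely presented group --- but your specific route leaves its essential hypothesis unverified. You propose to rerun the construction of (3) with recursive parameters to get a finitely generated solvable $G\supseteq H$, to ``arrange'' that $G$ has word problem in $\mathrm{NP}$, and then to apply an undistorted Higman embedding of $G$ into a finitely presented $K$. The group $G$ in (3) is produced by the black-box embedding theorem (Theorem \ref{olshd}), and nothing in the statement of that theorem controls the complexity, or even the decidability, of the word problem of $G$; you give no argument for this step and yourself call it ``the main obstacle.'' Since it is precisely the content of the passage to a finitely presented group, this is a genuine gap. (The downstream pieces are fine: the transfer of distortion through an undistorted embedding with compatible generating sets, and the Fekete-type observation that a superadditive function bounded by a linear function on an infinite set is linear, are both correct; by monotonicity of the distortion function you do not even need $S$ to contain long intervals --- the paper's single points $d_i-1$ suffice.)

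The paper sidesteps the obstacle by applying the finitely presented machinery at a different spot: rather than embedding the ambient group $G$, it invokes Olshanskii's theorem from \cite{olshr} \emph{directly for $H$} and the weight function $l$ of Equation \eqref{pct}, obtaining a finitely presented $K\supseteq H$ with $l(h)=\Theta(|h|_X)$; the oscillation argument of Lemma \ref{vd} then applies verbatim. To make $l$ computable, the paper (i) replaces the non-recursive $g$ by $g(x)=x^2$ and (ii) replaces Hall's abstract existence statement by an explicit presentation of $H$ (the group $B$ on the $b_i$ with $b_i^2=1$, $[[b_i,b_j],b_k]=1$ and $c_{ij}=c_{kl}$ for $j-i=l-k$, extended by the shift automorphism), precisely so that $H$ has solvable word problem. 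Your proposal does not engage with this point: Hall's theorem alone does not provide a group on which $l$ is computable, so even the input to your first step requires the explicit construction. The natural repair of your argument is the paper's: work with the explicit $H$ and the computable $l$, and apply \cite{olshr} to $H$ itself instead of trying to control the word problem of $G$.
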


The statements $(1)$ and $(2)$ of Theorem \ref{s12} yield the answer to Sapir's question:

\begin{cor}
If $H$ is an infinte cyclic group, then all possible distortion functions for the embeddings of H into finitely generated (solvable) groups G are, up to the equivalence $\approx$ all the increasing superadditive functions $f: \mathbb{N} \rightarrow \mathbb{N}$.
\end{cor}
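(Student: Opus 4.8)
The plan is to obtain the corollary as a direct combination of parts (1) and (2) of Theorem~\ref{s12}, which between them supply the two inclusions of the claimed characterization.

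First I would establish that every distortion function in question is $\approx$-equivalent to an increasing superadditive function. Let $H$ be infinite cyclic and let $H\hookrightarrow G$ be any embedding into a finitely generated group $G$. Part (2) of Theorem~\ref{s12} says precisely that $\Delta_H^G$ is $\approx$-equivalent to some increasing superadditive $f\colon\mathbb{N}\to\mathbb{N}$. So the $\approx$-class of every such distortion function lies in the set of $\approx$-classes of increasing superadditive functions.

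Next I would establish the converse: every increasing superadditive function is attained. Let $f\colon\mathbb{N}\to\mathbb{N}$ be increasing and superadditive. The group $H=\z$ is an infinite, finitely generated, solvable group, so part (1) of Theorem~\ref{s12} applies directly and produces an embedding of $H$ into a finitely generated solvable group $G$ with $\Delta_H^G\approx f$. Hence every increasing superadditive function is the distortion function, up to $\approx$, of some embedding of an infinite cyclic group, and moreover one may take $G$ solvable.

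Putting the two together gives the stated equivalence: the $\approx$-classes of distortion functions for embeddings of an infinite cyclic group into finitely generated groups are exactly the $\approx$-classes of increasing superadditive functions $f\colon\mathbb{N}\to\mathbb{N}$, with the solvable case yielding the same family. The argument is short because all the work sits in Theorem~\ref{s12}; the one point to keep in mind is the internal consistency of the two parts, namely that an increasing superadditive function automatically satisfies $f(n)\ge n f(1)\ge n$, hence $f\succeq r$, which matches the remark that the distortion function of an infinite subgroup is at least linear. The main ``obstacle'', such as it is, is therefore not in the corollary at all but in the proofs of parts (1) and (2) of Theorem~\ref{s12}.
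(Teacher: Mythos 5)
Your proposal is correct and matches the paper's own argument exactly: the paper derives the corollary by combining part (2) of Theorem \ref{s12} (every distortion function of an infinite cyclic subgroup is $\approx$-equivalent to an increasing superadditive function) with part (1) applied to $H=\z$ (every increasing superadditive function is realized, with $G$ solvable). Nothing further is needed.
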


It is interesting to compare this class of functions with another class of functions. In \cite{osinnilp}, Corollary 2.4, Osin obtained the description of distortion functions for subgroups in finitely generated nilpotent groups: they can be only of the form $r^q$ for some rational exponents $q$. 

We are able to make the following connections with the relative growth functions of embeddings.

\begin{theorem}\label{ellp}
\begin{enumerate}
\item Let $f$ be an increasing, superadditive  function on $\mathbb{N}$. The embedding of $H=\z$ into a finitely generated group $G$ having distortion function equivalent to $f$ given by Theorem \ref{s12} can be chosen so that the relative growth of $H$ in $G$ is bounded up to equivalence $\approx$ from above by $2^{\sqrt{r}}$. 
\item The function $2^{\sqrt{r}}$ in $(1)$ cannot be replaced by a function with less growth, since if the distortion function of a finitely generated subgroup is exponential, then its relative growth is at least $2^{\sqrt{r}}$, up to equivalence.
\end{enumerate}
\end{theorem}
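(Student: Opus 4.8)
\textbf{Proof proposal for Theorem \ref{ellp}.}

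The plan is to treat the two parts separately, since part (2) is a clean general inequality relating distortion and relative growth, while part (1) requires inspecting the specific construction from Theorem \ref{s12}. I would begin with part (2), as it is the conceptual heart of the statement. Suppose $H$ is finitely generated by $Y$, sits inside $G$ finitely generated by $X$, and the distortion $\Delta_H^G$ is exponential, say $\Delta_H^G(r) \succeq 2^{r}$. Fix $r$ and let $n$ be the largest value of $|h|_Y$ attained by elements $h \in H$ with $|h|_X \le r$; by hypothesis $n \succeq 2^{r}$, i.e. $r \preceq \log_2 n$. Now I would count: the set $B_H^Y(n)$ of elements of $H$ of $Y$-length at most $n$ has at least $n+1$ elements (since $H$ is infinite, its ball of radius $n$ in any finite generating set has size at least $n+1$ — and if $H$ has exponential distortion it is certainly infinite), but all of these elements with $|h|_X \le r$ lie in $B_G^X(r) \cap H$. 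The subtlety is that not every element of $B_H^Y(n)$ has small $X$-length; rather, the definition of $\Delta_H^G$ only tells us that some element of $Y$-length exactly $n$ is reached within $X$-radius $r$. So the correct move is: pick a geodesic-type witnessing sequence. For each $k \le n$ there is an element $h_k \in H$ with $|h_k|_Y = k$ lying on a $Y$-geodesic to a far element; more carefully, take $h \in H$ with $|h|_Y = n$ and $|h|_X \le r$, write $h = y_1 \cdots y_n$ as a $Y$-geodesic word, and set $h_k = y_1 \cdots y_k$. Then $|h_k|_Y \le k \le n$ and, crucially, $|h_k|_X \le |h|_X + |y_{k+1}\cdots y_n|_X \le r + C n$ where $C = \max_{y \in Y} |y|_X$. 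That gives $n+1$ distinct elements (the $h_k$ are distinct since their $Y$-lengths are distinct) of $X$-length at most $r + Cn$, hence $g_H^G(r + Cn) \ge n+1$. Since $n \succeq 2^r$, we have $r + Cn \approx n$, so substituting $\rho \approx n \approx 2^r$ gives $g_H^G(\rho) \succeq \log_2 \rho$...

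that is too weak; the issue is that I have only exhibited $n$ elements along a single geodesic, which only recovers linear growth in the radius. To get $2^{\sqrt r}$ I need to branch. The fix: because the distortion is exponential at \emph{every} scale (or cofinally many scales $r_i$), I can build a binary-tree-like family. At scale $r$, I have an element of $Y$-length $\approx 2^r$ reachable in $X$-radius $r$. Concatenating $\sqrt r$ many such ``jump'' elements, each of $X$-cost $\approx \sqrt r$ (using scale $\sqrt r$, where the $Y$-length gain is $\approx 2^{\sqrt r}$), and making independent sign/letter choices at each jump, produces $\approx 2^{\sqrt r}$ distinct elements of $H$, each of total $X$-length $\approx \sqrt r \cdot \sqrt r = r$, and each of $Y$-length at most $\approx \sqrt r \cdot 2^{\sqrt r} \preceq 2^{2\sqrt r}$. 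Distinctness is the delicate point and would be argued by choosing the jump elements to generate a free abelian or free subsemigroup on the relevant coordinates, or directly from the structure of the embedding; abstractly, one uses that an infinite group has at least $k+1$ elements in a $Y$-ball of radius $k$, applied inside each branch. This yields $g_H^G(r) \succeq 2^{\sqrt r}$, which is part (2). The main obstacle is exactly this distinctness/independence argument — ensuring the concatenated jumps do not collapse — and I expect the cleanest route is to phrase it as: whenever $\Delta_H^G(r) \succeq 2^r$, for the witnessing elements $w$ one has $|w|_Y \approx 2^{|w|_X}$ along a sequence, and then $g_H^G(n) \ge \#B_H^Y(\log_2 n\text{-ish})$ composed appropriately — i.e. relate $g_H^G$ to the growth of $H$ itself evaluated at the inverse of the distortion, $g_H^G(r) \succeq g_H^Y(\Delta^{-1}\text{-companion})$, and use that $\langle H, Y\rangle$ infinite forces $g_H^Y(k) \succeq k$, combined with the superadditivity from Theorem \ref{2.2}(1) to boost linear-along-a-geodesic growth into the stated bound.

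For part (1), I would return to the group $G$ constructed in the proof of Theorem \ref{s12}(1) realizing a prescribed increasing superadditive distortion $f$ for $H = \z$. That construction (presumably a wreath-product or iterated-HNN type tower, given the solvable conclusion and the notation \verb|\wre|, \verb|\Sal|, \verb|\zb| set up in the preamble) embeds $\z$ with controlled distortion; I would read off from it that the number of subgroup elements reachable within $X$-radius $r$ is governed by how many ``digits'' or ``levels'' are active at radius $r$, which in these constructions grows like $2^{O(\sqrt r)}$: roughly, reaching into level $k$ costs $\Theta(k^2)$ (a $k$-fold commutator or a word spelling out $k$ in a counter both cost quadratically), so within radius $r$ only levels up to $\approx \sqrt r$ are accessible, and each contributes a bounded branching factor, giving $g_H^G(r) \preceq 2^{c\sqrt r}$. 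Concretely I would verify the inequality $g_H^G(r) \preceq 2^{\sqrt r}$ by bounding $\#(B_G(r) \cap \langle t\rangle)$ via the normal form of elements of $G$: each element of $\langle t \rangle$ of bounded $X$-length has a normal form supported on $O(\sqrt r)$ coordinates with entries of size $O(1)$ per coordinate, whence $O(\sqrt r)$ bits of information, i.e. at most $2^{O(\sqrt r)}$ such elements. If the construction of Theorem \ref{s12}(1) does not already have this feature, the remark to make is that it can be modified to do so without changing the distortion class — padding the construction with ``spacer'' generators that slow down how fast deep levels are reached, at quadratic cost, which affects neither the solvability nor the value of $\Delta_H^G$ up to $\approx$. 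I expect part (1) to be mostly bookkeeping on the earlier construction, with the only real care needed being the claim that slowing the relative growth down to $2^{\sqrt r}$ is compatible with keeping $\Delta_H^G \approx f$ for the given arbitrary $f$ — and part (2) then shows this is sharp precisely in the exponential-distortion case, which is the extremal case of the allowed $f$'s.
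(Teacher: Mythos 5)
Your skeleton for Part (2) --- a family of $2^k$ products of ``jump'' elements all landing in a ball of radius about $k^2$ --- is the right one, but the step you yourself call ``the delicate point,'' the pairwise distinctness of those products, is exactly the step you never supply, and your candidate fixes fail. You cannot assume the jumps generate a free or free abelian subsemigroup: Part (2) is asserted for an arbitrary finitely generated exponentially distorted subgroup of an arbitrary finitely generated group. The composition bound $g_H^G(r)\succeq g_H(\Delta^{-1}\text{-companion})$ is also false in general, because exponential distortion only guarantees that \emph{some} element of $Y$-length about $2^r$ lies in $B_G(r)$, not that the whole $Y$-ball of that radius does; and taking all $\sqrt r$ jumps ``at scale $\sqrt r$'' literally collapses the family to the powers of a single element. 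The paper's missing ingredient is a \emph{selection} of the jumps: for each $k$ choose $g_k\in H$ with $|g_k|_G\le Dk$ and $3|g_{k-1}|_H<|g_k|_H$ (possible by exponential distortion). Then the $2^k$ products $g_k^{\nu_k}\cdots g_1^{\nu_1}$, $\nu_j\in\{0,1\}$, all lie in $B_G(Dk(k+1)/2)$, and an equality between two of them would express $g_k$ as a product of $g_1^{\pm1},\dots,g_{k-1}^{\pm1}$ with each factor occurring at most twice, forcing $|g_k|_H\le 2\sum_{j<k}|g_j|_H<|g_k|_H(\tfrac23+\tfrac29+\cdots)<|g_k|_H$, a contradiction. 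This geometric-series device is what makes the count work with no structural hypotheses on $H$ or $G$, and it is absent from your argument.

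For Part (1) your plan is to ``read off'' the bound from a levels-cost-$\Theta(k^2)$ heuristic about normal forms in the ambient group, but that is not how the construction of Theorem \ref{s12}(1) is organized, and no actual counting argument is given. In that construction the relative growth is $\approx\#\{i\in\z: l(i)\le r\}$ for the weight function $l$ of \eqref{ps}, so one must count integers admitting an expression $\pm n_{i_1}\pm\cdots\pm n_{i_k}\pm n$ of total weight $l_{i_1}+\cdots+l_{i_k}+n\le r$. The paper (Lemma \ref{dd}) bounds this by the number of partitions of integers at most $r$, which is $\preceq 2^{\sqrt r}$ by Hardy--Ramanujan \cite{partition}, multiplied by the number of sign arrangements, which is at most $2^{\sqrt{2r}}$ because an expression of weight at most $r$ has fewer than $\sqrt{2r}$ pairwise distinct summands (since $1+2+\cdots+m>r$ once $m\ge\sqrt{2r}$). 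Neither the partition-function input nor the distinct-summand bound appears in your sketch, and the fallback of ``padding with spacer generators'' is neither needed nor justified. So both halves of your proposal are missing their decisive counting step.
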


One of the motivations for studying recursive bounds on the distortion function is because of connections with the computational complexity of the algorithmic membership problem. It was observed in \cite{gromov} and proved in \cite{farb} that for a finitely generated subgroup $H$ of a finitely generated group $G$ with solvable word problem, the membership problem is solvable in $H$ if and only if the distortion function of $H$ in $G$, $\Delta_{H}^{G}(r)$, is bounded by a recursive function. 

The next result will be proved in Section \ref{swz}. The authors will recall the definition of the little $o$ notation, as well as explain the meaning of the effective version of little $o$, denoted by $o^e$, in Section \ref{three}. Although the statement \ref{cm} Part $1$ is well known, we include it in the statement of the Theorem for completeness.

\begin{theorem}\label{cm}
\begin{enumerate}
\item  If $H$ be a finitely generated subgroup of a finitely generated group $G$ and $g_H^G(r)$ is $o(r^2)$, then $H$ must be virtually cyclic.
\item Let $H$ be a finitely generated subgroup of a finitely generated group $G$. If $g_H^G(r)$ is $o^e(r^2)$, then $\Delta_H^G(r)$ is bounded above by a recursive function.

\item One may not replace $o^e(r^2)$ by $o(r^2)$ in the previous statement.
\end{enumerate}
\end{theorem}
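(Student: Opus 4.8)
The plan is to handle the three parts in turn, with part (2) as the core.

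\medskip
\noindent\textbf{Part (1).} I would reduce to the intrinsic growth of $H$. Since each generator of $H$ has bounded $X$-length, there is a constant $c$ with $g_H^H(r)\le g_H^G(cr)$, so the hypothesis $g_H^G(r)=o(r^2)$ forces $g_H^H(r)=o(r^2)$; that is, $H$ has growth strictly slower than $r^2$ as a finitely generated group. It is well known (via Gromov's polynomial growth theorem together with the Bass--Guivarc'h formula, or by more elementary arguments on groups of small growth) that such a group is virtually cyclic, and I would simply cite this.

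\medskip
\noindent\textbf{Part (2).} Apply part (1): $H$ is virtually cyclic, and we may assume $H$ infinite (else $\Delta_H^G$ is bounded), so $H$ contains an infinite cyclic $C=\langle h_0\rangle$ of finite index. Standard finite-index (quasi-isometry) arguments give $g_H^G\asymp g_C^G$ and $\Delta_H^G\approx\Delta_C^G$, and since $g_C^G(r)\le g_H^G(r)$ the effective little-$o$ is inherited: $g_C^G(r)=o^e(r^2)$. Write $\rho(n)=|h_0^n|_X$ and $C_0=|h_0|_X$, so $\rho$ is even, subadditive, and $\rho(n)\le C_0|n|$. The key combinatorial step is: \emph{if $\Delta_C^G(r)=m>Ar/C_0$ for an integer $A\ge 1$, then $g_C^G(2Ar)\ge A^2r/C_0$.} Indeed $\rho(m)\le r$, and for $1\le i\le A$, $0\le j\le\lfloor Ar/C_0\rfloor$ (note $\lfloor Ar/C_0\rfloor<m$) one has $\rho(im+j)\le i\rho(m)+jC_0\le Ar+Ar=2Ar$; the integers $im+j$ are pairwise distinct because $j$ runs over an interval of length less than $m$, so there are at least $A(\lfloor Ar/C_0\rfloor+1)\ge A^2r/C_0$ nontrivial elements of $C$ in the ball of radius $2Ar$. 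Now use the effectiveness of the little-$o$: it furnishes a recursive function $N$ with $g_C^G(s)\le s^2/k$ whenever $s\ge N(k)$. Put $k=\lceil 4rC_0\rceil+1$ and $A=\lceil N(k)/(2r)\rceil$, so that $s:=2Ar\ge N(k)$; then $g_C^G(2Ar)\le 4A^2r^2/k<A^2r/C_0$, which contradicts the displayed bound unless $\Delta_C^G(r)\le Ar/C_0$. Hence $\Delta_C^G(r)\le N(\lceil 4rC_0\rceil+1)/(2C_0)+r/C_0$, a recursive function of $r$, and therefore $\Delta_H^G$ is bounded above by a recursive function.

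\medskip
\noindent\textbf{Part (3).} I would produce an embedding with $g_H^G(r)=o(r^2)$ but $\Delta_H^G$ not bounded by any recursive function; by part (1) such an $H$ is virtually cyclic, and this is consistent with part (2) only because the little-$o$ here is not effective. I would invoke the embedding built for Theorem~\ref{s12}(3) (whose distortion is non-recursive yet linear on an infinite set of radii), or a variant tuned to the present purpose, and check directly from the construction that the relative growth is $o(r^2)$: the distortion is created by a sparse family of ``shortcuts'' $h_0^{n_i}$ with $s_i:=|h_0^{n_i}|_X\to\infty$, and a count analogous to the one in part (2) shows that the $i$-th shortcut contributes only $O(r^2/s_i)$ to $g_H^G(r)$ at scale $r$; spacing the $n_i$ (equivalently the $s_i$) according to a non-recursive recursively enumerable set drawn from, say, the halting problem then yields $g_H^G(r)=o(r^2)$ while evaluating $\Delta_H^G$ at suitable radii decides that set, so $\Delta_H^G$ is non-recursive. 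The non-recursiveness of the spacing is precisely what destroys the effectiveness of the little-$o$.

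\medskip
\noindent\textbf{Main obstacle.} The delicate point is part (3): strengthening the distortion tends to inflate the relative growth past $r^2$ (exactly as the counting inequality of part (2) reflects), so the shortcuts must be at once sparse enough and ``expensive'' enough, and one must verify that the accumulated contributions of \emph{all} shortcuts still sum to $o(r^2)$ at every scale. In part (2) the only care needed is extracting a genuinely recursive witness $N$ from the definition of $o^e$ and confirming that the finite-index passage $H\rightsquigarrow C$ preserves it — both routine once the counting inequality is established — while part (1) is a citation.
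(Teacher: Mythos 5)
Your parts (1) and (2) are correct. Part (1) is the paper's argument (reduce to the intrinsic growth of $H$ via \eqref{ng}, then Gromov plus Bass, as in Lemma \ref{cm3}). Part (2) reaches the same conclusion as the paper's Lemma \ref{cm2} by the same underlying counting idea --- a short power $h_0^m$ generates $\gtrsim A^2r/C_0$ distinct elements $h_0^{im+j}$ in a ball of radius $2Ar$ --- but you arrange it contrapositively and extract an explicit recursive bound $\Delta_C^G(r)\le Ar/C_0$ with $A=\lceil N(\lceil 4rC_0\rceil+1)/(2r)\rceil$ directly from the $o^e$ witness $N$, whereas the paper argues by contradiction against an arbitrary function $\phi$ of effective limit infinity (Lemma \ref{waffl} and Remark \ref{na}). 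Your version is cleaner and equally valid; the only cosmetic repair needed is to force $A\ge 1$ so the index range $1\le i\le A$ is nonempty.

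Part (3), however, has a genuine gap: it is a plan whose crucial step is asserted rather than proved. First, you cannot literally ``invoke the embedding built for Theorem \ref{s12}(3)'': that subgroup is Hall's two-generated group with infinite center, which is not virtually cyclic, so by your own part (1) its relative growth cannot be $o(r^2)$; the ``variant'' is therefore carrying the entire proof. Second, the claim that each shortcut $h_0^{n_i}$ of cost $s_i$ contributes only $O(r^2/s_i)$ and that these contributions ``sum to $o(r^2)$'' is exactly the point that does not follow from the part-(2) count: elements reachable using \emph{combinations} of several shortcuts are counted by a \emph{product} of coefficient ranges, not a sum, and without constraints this product is easily $\Theta(r^2)$ or worse. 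The paper's construction (Lemma \ref{seq}) imposes $a_i\ge 2^{i+2}n_{i-1}$, $n_i>n_{i-1}a_i/a_{i-1}$ and $n_{i-1}\mid n_i$ precisely so that in a minimal-cost representation $n=k_1n_1+\dots+k_{j-1}n_{j-1}$ the coefficients satisfy $|k_i|<n_{i+1}/n_i$ (Lemma \ref{atpy}), which makes the product telescope to at most $r^2/a_{j-2}=o(r^2)$ (Lemma \ref{fpt}); none of this is supplied or replaced in your sketch. Finally, two smaller points: you need the distortion to be \emph{not bounded above by any} recursive function (not merely non-recursive), which the paper gets by diagonalizing over all recursive functions in Lemma \ref{seq}; and you need an embedding theorem (Theorem \ref{olshc}) to realize the constructed length function on $\mathbb{Z}$ as $|a^n|_G$ up to $\Theta$-equivalence, which you never invoke.
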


\begin{rem}
Of course $o(r^2) \preceq 2^{\sqrt{r}}$; but Part $(1)$ of Thereom \ref{ellp} does not contradict Part $(3)$ of Theorem \ref{cm} because there are functions $\mathbb{N} \rightarrow \mathbb{N}$ which are not bounded by any recursive function from above and not bounded from below by any exponential function.
\end{rem}

In Section \ref{three} we introduce another equivalence {\it of functions on groups}, $\Theta$ equivalence.

We recall the following result of Olshanskii (see \cite{olshanskii}).

\begin{theorem}\label{olshd}
Let $H$ be a group and $l: H \rightarrow \{0,1,2,\dots,\}$ be a function satisfying: 
\begin{itemize}
\item (D1) $l(h)=l(h^{-1}), h \in H; l(h)=0$ if and only if $h=1$
\item (D2) $l(h_1h_2) \leq l(h_1)+l(h_2), h_1, h_2 \in H$
\item (D3) There exists $a>0$ such that $\#\{h \in H : l(h) \leq r\} \leq a^r$ for any $r \in \mathbb{N}.$
\end{itemize}
Then there exists a finitely generated group $G$ with generating set $X$ and an embedding of $H$ into $G$ such that the function $h \mapsto |h|_X$, $h \in H$ is $\Theta$-equivalent to the function $l$.
\end{theorem}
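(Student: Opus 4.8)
The plan is to realize $G$ by a length-sensitive refinement of the Higman--Neumann--Neumann embedding of a countable group into a finitely generated one: in the usual HNN construction an element $h_n$ of the enumerated group is recovered from the generators by a word whose length is proportional to the \emph{index} $n$, and the point is to replace the index by a codeword whose length is proportional to $l(h_n)$ instead. Condition (D3) is precisely what makes such short codewords available, condition (D2) is what converts a word in $G$ representing $h\in H$ into an upper bound for $l(h)$, and condition (D1) is needed both for the symmetry of word length and to guarantee that the embedding $H\hookrightarrow G$ is injective.

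\textbf{Step 1 (encoding via (D3)).} We may assume $H$ is countably infinite, and, enlarging $a$ if necessary, that $a\ge 2$. Enumerate $H=\{1=h_0,h_1,h_2,\dots\}$ so that $l(h_0)\le l(h_1)\le\cdots$. Since $\#\{h: l(h)\le r\}\le a^r$, every $h_n$ with $n\ge 1$ satisfies $n\le a^{\,l(h_n)}$, hence $\lceil\log_2 n\rceil\le C_1\, l(h_n)$. Thus there is an injection $\sigma\colon H\setminus\{1\}\to\{0,1\}^{<\omega}$ with $|\sigma(h)|\le C_1\, l(h)$ for all $h\ne 1$. One further arranges that the words $\sigma(h)$ are \emph{self-delimiting} and ``independent'' in the technical sense needed below (for instance, by passing to codewords of the form $\widetilde{\sigma}(h)=0^{k}1\,\sigma(h)\,1\,0^{k}$ read as a word in two free generators, so that the $\widetilde\sigma(h)$ freely generate a free subgroup of a rank-two free group by a ping-pong argument); this costs only a constant factor, so still $|\widetilde\sigma(h)|\le C_2\, l(h)+C_2$.

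\textbf{Step 2 (the group and the upper bound).} Form $H*\langle a\rangle*\langle b\rangle$ with $a,b$ of infinite order. Using the codewords of Step 1 in place of the indices, carry out the standard two-step (free-product plus HNN, or double-HNN) Higman--Neumann--Neumann construction: introduce finitely many stable letters and defining relations that conjugate an explicit free subgroup with basis indexed by $\{b_{\widetilde\sigma(h)}\}$ onto a free subgroup with basis indexed by $\{h\cdot b_{\widetilde\sigma(h)}\}$, so that in the resulting group $G$ each $h\in H$ equals a word $w_h$ in the finitely many generators $X$ with $|w_h|_X\le C_3\, l(h)+C_3$ (go out along the stable letters, spell $\widetilde\sigma(h)$, take a commutator, return). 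Finite generation of $G$ and injectivity of $H\hookrightarrow G$ follow from Britton's lemma for the HNN steps together with the independence of the codewords. This already gives $|h|_X\preceq l(h)$ in the $\Theta$-sense.

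\textbf{Step 3 (the lower bound) and the main obstacle.} It remains to show $l(h)\le C_4|h|_X+C_4$. Given a geodesic word $u$ over $X$ with $\overline u=h\in H$, apply Britton's lemma/normal-form analysis in each HNN layer of $G$ to rewrite $\overline u$ as a product $g_1g_2\cdots g_m$ in $G$ with $m\le C_5|u|$, where each $g_i$ lies in $H$ and has $l(g_i)$ bounded by an absolute constant (each $g_i$ is, after cancellation governed by the codeword structure, one of finitely many fixed elements of $H$, or the identity). Then (D2) yields $l(h)=l(g_1\cdots g_m)\le\sum_{i=1}^m l(g_i)\le C_6 m\le C_7|h|_X$. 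Combined with Step 2 this gives the $\Theta$-equivalence. The hard part of the whole argument is this last step: it requires choosing the codewords $\widetilde\sigma(h)$ rigidly enough (a small-cancellation / ping-pong condition over the relevant free products) that no short word over $X$ representing an element of $H$ can ``decode'' to an element of large $l$-value through unexpected cancellation — that is, that the embedding does not undistort $H$ beyond a linear factor. All of (D1)--(D3) enter here: (D3) to keep the codeword length linear in $l$ while $X$ stays finite, (D2) for the final summation, and (D1) both for the symmetry matching $|h|_X=|h^{-1}|_X$ and to ensure the only element recovered by the empty codeword is $h_0=1$.
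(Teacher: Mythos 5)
This theorem is not proved in the present paper at all: it is quoted verbatim as a known result of Olshanskii from \cite{olshanskii}, so there is no internal proof to compare against. Judged on its own merits, your proposal has the right general shape (embed $H$ by an HNN-type construction in which the ``address'' of $h$ is a codeword of length $O(l(h))$ rather than of length proportional to its index in an enumeration; this is indeed the philosophy behind such embedding theorems), and Steps 1 and 2 are plausible: the estimate $n\le a^{l(h_n)}$ from (D3) and the resulting upper bound $|h|_X\le C\,l(h)+C$ are fine.

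The genuine gap is Step 3, which you yourself flag as ``the hard part'' and then do not carry out. Two specific problems. First, the assertion that Britton's lemma rewrites a geodesic word $u$ with $\overline u=h$ as a product $g_1\cdots g_m$ of elements of $H$ ``each with $l(g_i)$ bounded by an absolute constant'' is false for the construction you describe: each pinch $t^{-1}wt$ with $w$ in the associated subgroup $\langle b_{\widetilde\sigma(h')}\rangle$ produces the factors $h'$ themselves, and these range over all of $H$, so their $l$-values are unbounded. What one actually needs is the quantitative statement that the \emph{sum} of the $l$-values of the $H$-factors created by a pinch is at most a constant times the length of the pinched subword; this requires the codewords $\widetilde\sigma(h)$ to form a basis with a Nielsen-type rigidity property (an element of the subgroup they generate has $\{a,b\}$-length at least a fixed fraction of the sum of the lengths of the basis elements occurring in its reduced expression), and then an induction over nested pinches combined with (D2). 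None of this is set up or verified: you have not shown that the two candidate associated subgroups are free on the indicated generating sets, that the isomorphism $b_{\widetilde\sigma(h)}\mapsto h\,b_{\widetilde\sigma(h)}$ is well defined, or that the rigidity survives the multiplication by the $H$-prefixes. Second, and relatedly, the lower bound $l(h)\preceq |h|_X$ is exactly the statement that the embedding realizes $l$ up to $\Theta$-equivalence rather than merely dominating it, and it is the entire content of the theorem; deferring it to an unproved ``small-cancellation / ping-pong condition'' means the proposal does not constitute a proof. The published argument of Olshanskii achieves this control by a substantially more delicate construction than the generic HNN embedding, precisely because this step does not come for free.
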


We refer to conditions $(D1), (D2),$ and $(D3)$ as the $(D)$ condition, and we often denote functions $l: H \rightarrow \mathbb{N} $ satisfying the $(D)$ condition as length functions. 

In Section \ref{elf} we study conditions under which a length function $l : H \rightarrow \{0,1,2,\dots,\} $ on a finitely generated group $H$ can be extended to a larger finitely generated group $G$ containing $H$ as a subgroup. This is a natural thing to study, because the proofs of the earlier Theorems were based on length functions, so we want to offer in conclusion some results on length functions themselves.

\begin{defn}
Let $H$ be a finitely generated subgroup of a finitely generated group $G$ and let $l : H \rightarrow \{0,1,2,\dots,\}$ satisfy the $(D)$ condition. We say that a function $L : G \rightarrow \mathbb{N} $ is an extension of $l$ if
\begin{itemize}
\item $L$ is a length function; that is, it satisfies the $(D)$ condition.
\item $L$ is an extension of $l$ up to $\Theta$-equivalence: the function $L$, when restricted to $H$, is $\Theta$-equivalent to $l$ on $H$.
\end{itemize}
\end{defn}

\begin{defn}\label{special}
Let $H$ be generated by a finite set $Y$ and consider a length function $l$ on $H$. If there exists a subadditive nondecreasing function $f$ such that for any $h \in H$, $l(h)= \Theta (f (|h|_Y))$ then we call the function $l$ special.

\end{defn}

\begin{theorem}\label{kam}
\begin{enumerate}
\item Let $H$ be a finitely generated undistorted subgroup of a finitel generated group $G$, and suppose that the length function $l$ on $H$ is special. Then there exists an extension $L$ of $l$ to $G$.
\item

Let $H$ be a finitely generated group, and let $l : H \rightarrow \{0,1,2,\dots\} $ be any function satisfying the $(D)$ condition. If $l$ is not special in the sense of Definition \ref{special} then there exists an undistorted embedding of $H$ into a finitely generated group $G$ such that $l$  does not extend to $G$.
\item If $H$ is a distorted subgroup of $G$, then there is a special length function $l$ on $H$ admitting no extension to $G$.
\end{enumerate}
\end{theorem}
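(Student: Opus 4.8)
The plan is to treat the three parts separately: $(1)$ by an explicit construction of an extension, $(3)$ by a one-line obstruction, and $(2)$ — the converse of $(1)$ — by a small-cancellation construction, which is where the work lies. Throughout I may assume $Y\subseteq X$, and I note that if $H$ is finite all three statements are trivial (every length function on a finite group is special), so assume $H$ infinite.

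\textbf{Part (1).} Write $l(h)=\Theta(f(|h|_Y))$ with $f$ subadditive and nondecreasing; after rescaling assume $f(0)=0$ and $f(n)\ge 1$ for $n\ge 1$. Define $\ell\colon G\to\mathbb{N}$ by $\ell(g)=l(g)$ if $g\in H$ and $\ell(g)=|g|_X$ if $g\notin H$, and set $L(g)=\min\{\sum_{j}\ell(g_j): g=g_1\cdots g_m\}$, the length function generated by $\ell$. Then (D1) and (D2) are immediate ($\ell$ is symmetric and vanishes only at $1$; $L$ is subadditive since factorizations concatenate). For (D3): a minimal factorization realizing $L(g)\le r$ has at most $r$ factors in $H$ and at most $r$ factors outside $H$ (each factor has $\ell$-value $\ge1$); summing over the position pattern, and noting that a non-$H$ factor of $X$-length $m$ has at most $(2|X|)^{m}$ choices while an $H$ factor of $l$-value $m$ has at most $a_0^{m}$ choices by (D3) for $l$, one bounds the number of such $g$ by $a^{r}$ for a suitable $a$. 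On $H$ the trivial factorization gives $L(h)\le\ell(h)=l(h)$. For the reverse inequality, take a minimal factorization of $h$ and collect it into alternating maximal runs of $H$-factors $A_1,\dots,A_p$ and non-$H$-factors $N_1,\dots,N_p$ (trivial runs allowed), with partial costs $\alpha_i\ge l(A_i)$, $\nu_i\ge |N_i|_X$, and $L(h)=\sum\alpha_i+\sum\nu_i$. The chain
$$|h|_Y\ \preceq\ |h|_X\ \le\ \sum_i|A_i|_X+\sum_i|N_i|_X\ \le\ \sum_i|A_i|_Y+\sum_i\nu_i$$
(triangle inequality, then undistortion of $H$, then $Y\subseteq X$), combined with applying $f$, the subadditivity bounds $f(x_1+\cdots+x_k)\le\sum f(x_i)$ and $f(cn)\le cf(n)$, and bounding $f(|A_i|_Y)$ by a constant multiple of $l(A_i)\le\alpha_i$ (and $f(\nu_i)\le f(1)\nu_i$), yields $l(h)=\Theta(f(|h|_Y))\preceq\sum_i\alpha_i+\sum_i\nu_i=L(h)$. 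Hence $L$ restricted to $H$ is $\Theta$-equivalent to $l$, so $L$ is the desired extension.

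\textbf{Part (3).} The function $l=|\cdot|_Y$ satisfies (D) and is special (take $f$ the identity). Suppose $L$ on $G$ extends it, so $L(h)\asymp|h|_Y$ on $H$. Every length function on $(G,X)$ satisfies $L(g)\le(\max_{x\in X}L(x))\,|g|_X$; applied to a sequence $h_k\in H$ witnessing the distortion of $H$ (so $|h_k|_Y/|h_k|_X\to\infty$) this gives $|h_k|_Y\preceq|h_k|_X$, a contradiction. So $l=|\cdot|_Y$ admits no extension to $G$.

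\textbf{Part (2).} Since $H$ is infinite, $\overline l(n):=\max_{|h|_Y\le n}l(h)$ tends to $\infty$ (otherwise $l$ would be bounded, contradicting (D3)); moreover $\overline l$ is automatically nondecreasing and subadditive (split a $Y$-geodesic), and ``$l$ is special'' is precisely ``$l(h)\ge c\,\overline l(|h|_Y)$'' for some $c>0$. As $l$ is not special, choose $h_k$ with $l(h_k)/\overline l(n_k)\to 0$, where $n_k:=|h_k|_Y\to\infty$, and $h_k^{*}$ with $|h_k^{*}|_Y\le n_k$ and $l(h_k^{*})=\overline l(n_k)$; thus $l(h_k^{*})/l(h_k)\to\infty$. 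Now let $G$ be a small-cancellation quotient of the free product $H*F(t_1,t_2)$ in which, for each $k$ in a suitable sparse subsequence, the relation $u_k h_k u_k^{-1}=h_k^{*}$ holds, where $u_k\in F(t_1,t_2)$ is chosen of slowly growing length $\ell_k=o(\overline l(n_k))$ and so that the relators $h_k^{*}u_k h_k^{-1}u_k^{-1}$ have only short pieces — i.e. so that a metric small-cancellation condition holds over $H*F(t_1,t_2)$. Then: (a) by the theory of small cancellation over free products, $H$ embeds in $G$ and is undistorted there (indeed $G$ is hyperbolic relative to $\{H\}$, and peripheral subgroups of relatively hyperbolic groups are undistorted); and (b) if $L$ is any length function on $G$ restricting on $H$ to a function $\Theta$-equivalent to $l$, then from $h_k^{*}=u_k h_k u_k^{-1}$ in $G$ we get $c\,\overline l(n_k)=c\,l(h_k^{*})\le L(h_k^{*})\le 2\ell_k\max_i L(t_i)+L(h_k)\le O(\ell_k)+O(l(h_k))=o(\overline l(n_k))$, which is impossible. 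Hence $l$ does not extend to this undistorted embedding.

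\textbf{The main obstacle} is step (a) of Part (2): one must choose the subsequence of witnesses $h_k,h_k^{*}$, the auxiliary words $u_k$, and (for non-abelian or otherwise complicated $H$) the precise form of the relators so that the metric small-cancellation hypotheses genuinely hold over $H*F(t_1,t_2)$ — in particular so that no element of $H$ is shortened by the new relations — and then deduce undistortion of $H$. This is the part that actually uses the small-cancellation / relative-hyperbolicity machinery (in the spirit of the cited work of Olshanskii); the rest — the (D1)--(D3) verification and counting estimate in Part (1), and the two short contradictions in Parts (2) and (3) — is routine.
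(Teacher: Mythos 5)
Your Parts (1) and (3) are essentially the paper's own arguments: the induced weight function $L(g)=\min_P\sum w(x_i)$ with $w\equiv 1$ on $X^{\pm1}$ and $w=l$ on $H$, verified against (D1)--(D3) and pinned to $l$ on $H$ via undistortion together with subadditivity and monotonicity of $f$; and, for (3), the observation that $|\cdot|_Y$ is special while any extension is Lipschitz-bounded by $|\cdot|_X$. Your reformulation of ``special'' as $l(h)\succeq\bar l(|h|_Y)$ with $\bar l(n)=\max_{|h|_Y\le n}l(h)$ is a correct and rather clean variant of the paper's Lemma \ref{oio}.

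The gap is in Part (2), exactly at the step you flag as the main obstacle, and it is not a routine verification: relators of the form $R_k=h_k^*u_kh_k^{-1}u_k^{-1}$ can never satisfy a metric small-cancellation condition over the free product. If $F(t_1,t_2)$ is taken as a single free factor, $R_k$ has syllable length $4$, so any nonempty piece already has relative syllable length at least $1/4$. If instead you work over $H*\langle t_1\rangle*\langle t_2\rangle$ to inflate the syllable length, then $u_k$ occurs as a subword of both $R_k$ and $R_k^{-1}=u_kh_ku_k^{-1}h_k^{*-1}$, so $u_k$ itself is a piece of relative length about $1/2$; no choice of $u_k$ avoids this. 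Consequently neither the injectivity of $H\to G$ nor its undistortion (nor relative hyperbolicity of $G$) is established, and all of Part (2) rests on these. The paper's fix is structural rather than technical: instead of conjugating, it writes the expensive element $a_k$ as a product $c^{\epsilon_{k,1}}b_kc^{\epsilon_{k,2}}b_k\cdots c^{\epsilon_{k,m_k}}$ containing $m_k-1\sim k^2/2$ copies of the cheap element $b_k$, with the exponent pattern $v_k=\prod_i(1^i\cdot 2)$ forcing pieces to have syllable length $O(k)$ against relators of syllable length $k(k+3)$, hence $C'(\frac{1}{10})$; and it strengthens your ``ratio tends to infinity'' to the quantitative gap $l(a_k)\ge k^3\,l(b_k)$ (also obtainable from non-specialness by passing to a sparser subsequence), which is needed so that the $\sim k^2$ copies of $b_k$ still yield $L(a_k)=o(l(a_k))$ in the final contradiction. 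With that relator shape your outline goes through; as written, Part (2) is incomplete.
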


\section{Embeddings and Relative Growth}\label{three}

In the definition of growth function, one should define an equivalence to get rid of the dependence on the choice of finite generating set $X$. In case of growth functions we need the  following equivalence, $\sim$.
 
For two functions $f,g: \mathbb{N} \rightarrow \mathbb{N}$, say that $f$ does not exceed $g$ up to equivalence if there exists a constant $c$ so that for all $r \in \mathbb{N}$ we have $f(r) \leq g(cr)$, and that $f$ is equivalent to $g$ ($f \sim g$) if both $f$ does not exceed $g$ and $g$ does not exceed $f$, up to equivalence. If we define the growth function to be the equivalence class of $g_H^G$ above, then it becomes independent of the choice of finite generating set. This is because if $X$ and $X'$ are two finite generating sets for $G$, then for the constant $c=\max\{|s|_{X'}:s \in X\}$ we have that $\{g \in G: |g|_X \leq r\} \subseteq \{g \in G: |g|_{X'} \leq cr\}$. 

For distortion functions of subgroups (see Definition \ref{distortion}) one needs another equivalence $\approx$.
\begin{defn}\label{equivalence}
One says that for $f,g: \mathbb{N} \rightarrow \mathbb{N}$ that $f \preceq g$ if there exists $c>0$ so that for all $r$, $f(r) \leq cg(cr)$. Two such functions $f$ and $g$ are said to be equivalent if $f \preceq g$ and $g \preceq f$, and we write this as $f \approx g$.
\end{defn}

The relative growth of an infinite finitely generated subgroup is always superadditive  (up to $\sim$-equivalence, see Theoerm \ref{2.2}) and so its $\sim$ class coincides with the $\approx$ class. This explains the preference for use of $\approx$ in our paper.

We will also need another equivalence of functions on {\it groups}.

\begin{defn}\label{theta}
Let $G$ be a group, and let $f, g : G \rightarrow \mathbb{N}$. We say that $f$ and $g$ are $\Theta$-equivalent, and write $f=\Theta(g)$ if there exists $c>0$ so that for all $r \in G$, $f(r) \leq cg(r)$ and $g(r) \leq cf(r)$.
\end{defn}

We will use some remarkable results regarding the usual growth function. A finitely generated group $G$ with finite generating set $X$ is said to have polynomial growth if $g_G(r) \preceq r^d$ for some $d \in \mathbb{N}$. A group $G$ is said to have exponential growth if for some $d >1$, $g_G(r) \succeq d^r$. It was proved by Wolf in \cite{wolf} that if $G$ is a finitely generated nilpotent group, then $G$ has polynomial growth. The degree of polynomial growth (up to $\sim$) in nilpotent groups is computed by Bass in \cite{bass} and is given by the following explicit formula
\begin{equation}\label{bass}
d=\displaystyle\sum_{k \geq 1}k \cdot \textrm{rk}(G_k/G_{k+1}),
\end{equation} 
where $\textrm{rk}$ represents the torsion free rank of an abelian group, and $G_k$ the terms of the lower central series for $G$. The famous theorem of Gromov says that a finitely generated group $G$ has polynomial growth only if it is virtually nilpotent (see \cite{gromov}). However, there are examples of groups of intermediate growth; that is, groups whose growth function is neither polynomial nor exponential (see \cite{grigorchuk2}).

In \cite{oo}, Corollary 1.2, the following result was obtained.

\begin{theorem}\label{olshsolv}
In the notation of Theorem \ref{olshd}, if $H$ is solvable, then so is $G$. In particular, if $H$ is solvable with solvability length $l$, then the solvability length of $G$ is $l+4$.
\end{theorem}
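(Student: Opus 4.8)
The plan is not to reprove an embedding theorem from scratch but to revisit the explicit finitely generated group $G$ produced in the proof of Theorem~\ref{olshd} (the construction of \cite{olshanskii}) and to bookkeep its solvability length operation by operation. That construction realises $G$ as a tower
\[
H = G_0 \le G_1 \le \cdots \le G_k = G,
\]
in which each $G_{i+1}$ arises from $G_i$ by one of: (a) a restricted wreath product $G_i \wr \z$ with an infinite cyclic group (the ``lamplighter/clock'' layers that both encode the prescribed length function $l$ and, at the top, reduce the number of generators to two); (b) a semidirect product $A \ltimes G_i$ with $A$ free abelian acting by automorphisms; (c) passage to a finitely generated subgroup of a group whose solvability length has already been controlled. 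The elementary facts I would invoke are: $\mathrm{sl}(N) \le \mathrm{sl}(G)$ whenever $N \le G$; $\mathrm{sl}(A \ltimes N) \le \mathrm{sl}(A) + \mathrm{sl}(N)$; and, for nontrivial groups $A,B$, $\mathrm{sl}(A \wr B) = \mathrm{sl}(A) + \mathrm{sl}(B)$ — the inequality ``$\le$'' because $(A \wr B)^{(\mathrm{sl}(B))}$ lies in the base group $\bigoplus_B A$, which is a direct sum of copies of $A$ and so has solvability length exactly $\mathrm{sl}(A)$; the reverse inequality via the usual iterated commutators.

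For the upper bound $\mathrm{sl}(G) \le l+4$ it then suffices to check that along the whole tower the operations of types (a) and (b) contribute in total exactly $4$ to the solvability length above the base $H$ — i.e. that the construction stacks four ``abelian-thick'' layers on top of $H$ (concretely, four wreath products with $\z$, or an equivalent solvable gadget of derived length $4$), while the subgroup passages of type (c) never increase it. Combined with the trivial bound $\mathrm{sl}(G) \ge \mathrm{sl}(H) = l$ coming from $H \le G$, this already gives $\mathrm{sl}(G) \le l+4$, and in particular that $G$ is solvable whenever $H$ is.

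The sharp equality $\mathrm{sl}(G) = l+4$ needs the matching lower bound $\mathrm{sl}(G) \ge l+4$, and this is where the real difficulty lies. One must exhibit a nontrivial element of the $(l+3)$-rd derived subgroup $G^{(l+3)}$ and verify that it is not destroyed when, at the last step, one passes from the ambient iterated wreath product $W$ (for which $\mathrm{sl}(W) = l+4$ by the formula above) down to the finitely generated subgroup $G$. The natural candidate is an iterated commutator formed from a nontrivial element of $H^{(l-1)}$ together with the four ``clock'' generators $t_1,\dots,t_4$ of the successive wreath layers; the point to be checked is that the finitely generated subgroup of $W$ chosen as $G$ contains enough conjugates of the lamp subgroups by powers of the $t_j$ for this commutator to be nonzero, i.e. that $G$ does not collapse to solvability length $<l+4$. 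I expect this non-collapse verification — tying the combinatorics of the generating set chosen in \cite{olshanskii} to the derived series of $W$ — to be the main obstacle; once it is in place, the two inequalities pin down $\mathrm{sl}(G)=l+4$, as asserted.
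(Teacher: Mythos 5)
This theorem is not proved in the paper at all: it is imported verbatim from \cite{oo} (Corollary 1.2), so the paper's ``own proof'' is just that citation, and your attempt has to be measured against the argument in \cite{oo} rather than against anything in this text. Your overall strategy --- trace the explicit construction behind Theorem \ref{olshd} and add up derived lengths layer by layer using $\mathrm{sl}(N)\le\mathrm{sl}(G)$ for $N\le G$, $\mathrm{sl}(A\ltimes N)\le \mathrm{sl}(A)+\mathrm{sl}(N)$, and the wreath-product formula --- is the right flavor, and it is essentially how the bound $l+4$ arises. But as written your text is a plan, not a proof: you never exhibit the actual tower $H=G_0\le\cdots\le G_k=G$ for the specific construction of \cite{olshanskii}/\cite{oo}, never identify which four layers contribute the $+4$, and never verify that the subgroup passages of type (c) are harmless. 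All of this is deferred with ``it suffices to check,'' yet these checks are the entire content of the statement; nothing in your proposal rules out, say, five abelian-thick layers or a semidirect factor $A$ of derived length $2$.

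The second gap is one you flag yourself: you spend most of the effort on the exact lower bound $\mathrm{sl}(G)\ge l+4$ and then explicitly leave its key step (the ``non-collapse verification'') open. Note that for every application of this theorem in the present paper (Theorems \ref{2.2}, \ref{s12}, \ref{ellp}) only solvability of $G$ with a controlled derived length is used; the clause ``the solvability length of $G$ is $l+4$'' functions as the statement that the constructed $G$ has derived length at most $l+4$. So the part of your argument that is merely sketched (the upper bound) is the part that is actually needed, while the part you correctly identify as the main obstacle (exact equality) is not required for anything here. In short: a reasonable framework, but no step of it is carried out, so this does not yet constitute a proof of the cited result.
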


In Theorem \ref{olshc}, we establish notation for the often occurring special case of Theorems \ref{olshd} and \ref{olshsolv}.

\begin{theorem}\label{olshc}
Let $l: \z \rightarrow \{0,1,2,\dots,\} $ be a function satisfying: for $m,n \in \mathbb{Z}$,
\begin{itemize}
\item (C1) $l(n)=l(-n), n \in \z; l(n)=0$ if and only if $n=0$
\item (C2) $l(n+m) \leq l(n)+l(m)$
\item (C3) There exists $a>0$ such that $\#\{i \in \z : l(i) \leq r\} \leq a^r$ for any $r \in \mathbb{N}.$
\end{itemize}
Then there exists a finitely generated  (solvable) group $G$ with generating set $X$ and an element $g \in G$ such that $$|g^n|_X  =\Theta( l(n)).$$
\end{theorem}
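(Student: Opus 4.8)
The plan is to obtain Theorem~\ref{olshc} as the special case $H=\z$ of Theorems~\ref{olshd} and~\ref{olshsolv}. First I would observe that, writing the group operation on $\z$ additively and using that $\z$ is finitely generated (by $1$), the hypotheses (C1), (C2), (C3) imposed on $l\colon\z\to\{0,1,2,\dots\}$ are, word for word, the hypotheses (D1), (D2), (D3) of Theorem~\ref{olshd} for the group $H=\z$. So Theorem~\ref{olshd} applies to $\z$ and the given $l$ with no further work.

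Next I would invoke Theorem~\ref{olshd} to produce a finitely generated group $G$ with finite generating set $X$ together with an embedding $\iota\colon\z\hookrightarrow G$ such that the function $n\mapsto |\iota(n)|_X$ is $\Theta$-equivalent to $l$ on $\z$. Put $g=\iota(1)$. Since $\iota$ is injective and $\z$ is torsion free, $g$ has infinite order and $g^m\ne g^n$ for $m\ne n$; moreover $\iota(n)=\iota(1)^n=g^n$ for every $n\in\z$. Hence the function $n\mapsto |g^n|_X$ coincides with $n\mapsto |\iota(n)|_X$ and is therefore $\Theta$-equivalent to $l$. Unwinding Definition~\ref{theta}, this means precisely that there is a constant $c>0$ with $c^{-1}l(n)\le |g^n|_X\le c\,l(n)$ for all $n\in\z$, i.e. $|g^n|_X=\Theta(l(n))$, which is the assertion of the theorem.

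Finally, for the parenthetical strengthening, note that $\z$ is abelian, hence solvable of solvability length $1$; Theorem~\ref{olshsolv} then permits $G$ in the previous paragraph to be chosen solvable (of solvability length $1+4=5$). There is no genuine obstacle here beyond this bookkeeping; the one point that deserves a moment's care is the identification of the function $n\mapsto |g^n|_X$ with the restriction of the word metric $|\cdot|_X$ to the cyclic subgroup $\langle g\rangle$, which is immediate once one knows that $\iota$ is an embedding and $\z$ is torsion free.
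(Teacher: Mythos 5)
Your proposal is correct and matches the paper exactly: the paper gives no separate argument for Theorem~\ref{olshc}, introducing it only as ``notation for the often occurring special case of Theorems~\ref{olshd} and~\ref{olshsolv}'' with $H=\z$, which is precisely the reduction you carry out. Your added bookkeeping (setting $g=\iota(1)$ and unwinding the $\Theta$-equivalence) is the same routine specialization the paper leaves implicit.
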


We refer to conditions $(C1), (C2),$ and $(C3)$ as the $(C)$ condition.

\begin{rem}
We may translate the geometric group theoretic functions into different terms as follows. Suppose that $l: H \rightarrow \mathbb{N} $ satisfies the $(D)$ condition, so that we have an embedding $H \rightarrow G$ with all notation as in Theorem \ref{olshd}. Then the relative growth of the finitely generated subgroup $H$ in $G$ is given by $$g_{H}^G(r) \approx \#\{h \in H :l(h) \leq r\},$$ and the distortion is $$\Delta_{H}^G(r) \approx \max \{|h|_H : h \in H, l(h) \leq r\}.$$

\end{rem}

The relative growth function is an asymptotic invariant of the embedding $H \leq G$ into a finitely generated group $G$ and can be studied in contrast with the usual growth functions of $H$ (if $H$ is also fintiely generated) and $G$ defined respectively as: $g_H(r)=\#B_H(r)$ and $g_G(r)=\#B_G(r)$. It is clear that when $H=G$ that $g_H^G(r) = g_G(r)$. Also, if $K \leq H \leq G$ then there are two relative growth functions, $g_K^G(r)$, the relative growth of $K$ in $G$, and $g_H^G(r)$, the relative growth of $H$ in $G$. In this case we have that $g_K^G(r) \preceq g_H^G(r)$.

Observe further that if $H$ is a finitely generated subgroup of a finitely generated group $G$, then we have that 
\begin{equation}\label{ng}
g_H(r) \preceq g_H^G(r) \preceq g_G(r) \preceq 2^r.
\end{equation}
This follows because $B_H(r) \subseteq B_G(cr) \cap H$ where $c$ is a constant depending only on the choice of finite generating sets.Because of the fact that $g_G(r) \leq (2\#X+1)^r$, we see that the regular and relative growth functions are always at most exponential.

We recall a couple more elementary notions. We say that a function $f(r)$ is $o(g(r))$ if $\displaystyle\lim_{r \to \infty}\frac{f(r)}{g(r)}=0$. The {\it effective} limit of a function $g(r)$ is infinity if there is an algorithm that given an integer $C$ computes $N=N(C)$ such that $g(r)\ge C$ for every $r > N$. In fact, the effective limit of a function can be any real number or $\pm \infty$; the effective limit of $f(r)$ equals $M$ if there is an algorithm that, given an integer $n >0$, computes $N=N(n)$ so that $|f(r)-M|<\frac{1}{n}$ for every $r>N$. We introduce the following notation: we say that $$f(r)=o^e(g(r))$$ if the effective limit of $\frac{f(r)}{g(r)}$ is $0$. For example, $(\log \log \log r)^{-1} = o^e(1)$.

The following facts are well-known and easily verified, and will be used implicitly throughout the text.

\begin{lemma}\label{findex}
Let $G$ be a finitely generated group, with finitely generated subgroup $H$.
\begin{itemize}
\item If $K \leq H \leq G$ and $[H:K] < \infty$ then $\Delta_H^G(r) \approx \Delta_{K}^G(r).$
\item If $[G:H] < \infty$ then $g_G(r) \approx g_H(r)$.
\item If $\Delta_H^G(r) \approx r$ then $g_H^G(r) \approx g_H(r)$.
\end{itemize}
\end{lemma}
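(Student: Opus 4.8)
The three statements are all standard consequences of two facts, so the plan is to record those first and then dispatch each bullet. The first fact: a finite-index subgroup of a finitely generated group is itself finitely generated and quasi-isometrically embedded, i.e., if $[H:K]<\infty$ and $Y,Y'$ are finite generating sets of $H,K$, then $|k|_{Y'}\le c\,|k|_Y\le c^2\,|k|_{Y'}$ for all $k\in K$ and some constant $c$. The second: if a subgroup is infinite then its distortion and growth functions are at least linear, so bounded additive errors and bounded dilations of the argument are invisible to $\approx$; hence in each bullet it suffices to prove the two one-sided inequalities. I would first dispose of the degenerate cases in which the relevant subgroup is finite — there all the functions in question are eventually constant and equivalence is immediate — and then assume all subgroups infinite.

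For the first bullet I would fix finite generating sets $X,Y,Y'$ of $G,H,K$. The inequality $\Delta_K^G\preceq\Delta_H^G$ is immediate: if $k\in K$ with $|k|_X\le r$, then since $k\in H$ we get $|k|_{Y'}\le c\,|k|_Y\le c\,\Delta_H^G(r)$. For the reverse, I would pick coset representatives $t_1,\dots,t_m$ of $K$ in $H$, set $D=\max_i\max\{|t_i|_X,|t_i|_Y\}$, and, given $h\in H$ with $|h|_X\le r$, write $h=t_ik$ with $k\in K$; then $|k|_X\le r+D$, so $|h|_Y\le D+|k|_Y\le D+c\,|k|_{Y'}\le D+c\,\Delta_K^G(r+D)$, and the right-hand side is $\preceq\Delta_K^G(r)$ because $\Delta_K^G$ is nondecreasing and at least linear.

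For the second bullet, $g_H(r)\preceq g_G(r)$ is contained in (\ref{ng}) (equivalently, $B_H(r)\subseteq B_G(cr)$). For $g_G(r)\preceq g_H(r)$ I would choose right coset representatives $t_1,\dots,t_m$ of $H$ in $G$ so that every $g\in G$ is uniquely of the form $ht_i$ with $h\in H$; the map $g\mapsto h$ is then at most $m$-to-one, and, with $D=\max_i|t_i|_X$, the inequality $|g|_X\le r$ forces $|h|_X\le r+D$, hence $|h|_Y\le c(r+D)$ for a suitable constant $c$ since $H$ is quasi-isometrically embedded in $G$. Therefore $g_G(r)\le m\cdot g_H(c(r+D))\preceq g_H(r)$.

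For the third bullet, $g_H(r)\preceq g_H^G(r)$ is again (\ref{ng}), and for the converse the hypothesis $\Delta_H^G(r)\approx r$ furnishes a constant $c$ with $|h|_Y\le\Delta_H^G(r)\le cr$ whenever $h\in H$ and $|h|_X\le r$, so $g_H^G(r)=\#\{h\in H:|h|_X\le r\}\le g_H(cr)\preceq g_H(r)$. I do not anticipate a genuine obstacle anywhere here: the only thing to watch is the routine bookkeeping among the multiplicative constant, the dilation of the argument, and the additive error that all appear in the definitions of $\preceq$ and $\approx$, together with keeping the finite-subgroup cases out of the way — which is exactly why the lemma is stated as ``easily verified'' rather than proved in full.
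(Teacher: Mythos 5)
Your proof is correct; the coset-representative arguments and the appeal to the undistortedness of finite-index subgroups are exactly the standard verifications intended here. The paper itself offers no proof of this lemma --- it is stated as ``well-known and easily verified'' --- so there is nothing to compare against, and your write-up simply supplies the routine details the authors chose to omit.
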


Therefore, we see that the {\it relative} growth effect appears only for distorted finitely generated subgroups. The study of the relative growth function is already motivated by the fact that it is an asymptotic invariant. However, in light of the above fact: that for undistorted subgroups the relative growth is $\sim$-equivalent to the usual growth, we are naturally curious about the relationship between these two functions. This serves to further justify the questions we address regarding the connections between the two asymptotic invariants.

Let the infinite cyclic group $\z=\langle a \rangle_{\infty}$ be embedded in a finitely generated group $G$ and $\Delta_{a}^G$ be the corresponding distortion function. 

\begin{lemma}\label{hfd}
The function $\Delta_a^G$ is equivalent to an increasing superadditive function.
\end{lemma}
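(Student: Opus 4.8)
The plan is to construct the desired increasing superadditive function explicitly from $\Delta_a^G$ and show the two are $\approx$-equivalent. Write $D(r) = \Delta_a^G(r) = \max\{|n| : |a^n|_X \le r\}$ (identifying $a^n$ with $n$ via the fixed generator $a$ of $\z$), so $D$ is already nondecreasing by definition, since enlarging $r$ can only enlarge the set over which we maximize. The subtlety is superadditivity, which need not hold on the nose, so I would pass to the natural "superadditive hull'' $f(r) = \max\{\, D(r_1) + \cdots + D(r_k) : k \ge 1,\ r_1 + \cdots + r_k \le r,\ r_i \in \mathbb{N}\,\}$. This $f$ is manifestly superadditive (concatenate an optimal decomposition of $r$ with one of $s$ to get a legal decomposition of $r+s$), and nondecreasing (a decomposition of $r$ is also available for any larger argument, or append a part), and $f \ge D$ (take $k=1$).

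The heart of the argument is the reverse inequality $f \preceq D$, i.e.\ $f(r) \le c\, D(cr)$ for some constant $c$. The geometric input is that word length in $G$ is subadditive: $|a^{m+n}|_X \le |a^m|_X + |a^n|_X$. Translating this to $D$: if $D(r_1) = |n_1|$ and $D(r_2) = |n_2|$ realized by group elements of length $\le r_1$ and $\le r_2$ respectively, then $a^{n_1 + n_2}$ has length $\le r_1 + r_2$, so $D(r_1 + r_2) \ge |n_1 + n_2|$. This only bounds $|n_1 + n_2|$, not $|n_1| + |n_2|$, so I cannot conclude superadditivity of $D$ directly — but I can choose the signs of the exponents realizing $D(r_1), \dots, D(r_k)$ so that they don't cancel. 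Concretely, in the decomposition achieving $f(r)$, replace each optimal exponent $n_i$ (with $|n_i| = D(r_i)$) by $\varepsilon_i n_i$ with $\varepsilon_i \in \{\pm 1\}$ chosen so that all the $\varepsilon_i n_i$ have the same sign; this is legal because $l(n) = l(-n)$ for the length function (condition (D1)/(C1)), equivalently $|a^n|_X = |a^{-n}|_X$. Then $a^{\varepsilon_1 n_1 + \cdots + \varepsilon_k n_k}$ has word length $\le r_1 + \cdots + r_k \le r$, and $|\varepsilon_1 n_1 + \cdots + \varepsilon_k n_k| = |n_1| + \cdots + |n_k| = D(r_1) + \cdots + D(r_k)$, so $D(r) \ge D(r_1) + \cdots + D(r_k)$. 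Taking the max over all decompositions gives $D(r) \ge f(r)$, hence in fact $f = D$ pointwise and $D$ was already superadditive — which means the "hull'' step was really just a device to see this.

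So the clean writeup is: observe $D = \Delta_a^G$ is nondecreasing by definition; then prove directly that $D(r+s) \ge D(r) + D(s)$ by taking $m, n$ with $|m| = D(r)$, $|n| = D(s)$ realized by short elements, flipping the sign of one of them using $|a^n|_X = |a^{-n}|_X$ so that $m$ and $n$ have the same sign, and noting $a^{m+n}$ has length $\le r + s$ with $|m + n| = |m| + |n| = D(r) + D(s)$. I expect the only real care needed is the bookkeeping that $\Delta_a^G$ as a function $\mathbb{N} \to \mathbb{N}$ is genuinely defined by a max over a set that is nonempty (which holds once $r$ is large enough that $|a|_X \le r$; for small $r$ one can set the value to $1$ or handle it by the $\approx$ slack) and that the sign-flipping does not increase word length — both of which are immediate from the definitions and from (D1), so no step here should present a genuine obstacle; the lemma is essentially a formal consequence of subadditivity of word length plus the symmetry $|g|_X = |g^{-1}|_X$.
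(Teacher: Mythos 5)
Your proposal is correct and, after the detour through the superadditive hull collapses, its ``clean writeup'' is exactly the paper's argument: both exhibit $a^{N+M}$ in the ball of radius $r+s$ (using $|a^n|_X=|a^{-n}|_X$ to align signs) to get superadditivity of $\Delta_a^G$ on the nose, and both get monotonicity from $\Delta_a^G(1)\ge 1$ once $a$ is placed in the generating set. The only cosmetic difference is that the paper states the sign alignment implicitly while you spell it out.
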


\begin{proof}

Let $a$ be included in the finite generating set of $G$. Let $\Delta_a^G (r) = N$ and $\Delta_a^G (s) = M$. Then the ball $B_G(r)$ of radius $r$ in $G$ has the element $a^{N}$ of $\z$. Likewise $a^M \in B_G(s)$, and so $a^{M +N} \in B_G(r+s)$. It follows that $\Delta_a^G (r+s) \geq M + N = \Delta_a^G (r)+\Delta_a^G (s)$. Therefore the distortion function is superadditive. Moreover, $\Delta(1) > 0$, since the ball of radius $1$ in $G$ contains $a$. By superadditivity, $\Delta(r+1) \ge \Delta(r)+\Delta(1)>\Delta(r)$; i.e. the distortion function strictly increases.

\end{proof}

Now that we have explained all relevant terminology and notation relating to embeddings and relative growth, we would like to introduce some of the elementary connections between the relative growth of a finitely generated subgroup in a finitely generated group, and the correspondng distortion function of the embedding. 

\begin{lemma}\label{erlr}
Suppose that $K$ is an infinite cyclic subgroup of a finitely generated group $G$. If $K$ is distorted, then the relative growth function of $K$ is not equivalent to a linear function.
\end{lemma}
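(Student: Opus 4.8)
The plan is to argue by contraposition: suppose the relative growth function $g_K^G$ were equivalent to a linear function, and derive that $K$ is undistorted. Write $K = \langle a \rangle_\infty$ with $a$ in the finite generating set $X$ of $G$, and let $\Delta(r) = \Delta_a^G(r)$ denote the distortion function. The key observation linking the two invariants is that the ball $B_G(r)$ contains exactly the powers $a^n$ with $|a^n|_X \le r$, and the largest such $|n|$ is by definition $\Delta(r)$; since $|a^n|_X = |a^{-n}|_X$, the set $B_G(r) \cap K$ consists of all $a^n$ with $-\Delta(r) \le n \le \Delta(r)$ together with possibly some sign constraints, so in any case
\begin{equation}\label{eq:rg-dist}
\Delta(r) + 1 \le g_K^G(r) \le 2\Delta(r) + 1.
\end{equation}
Thus $g_K^G(r) \approx \Delta(r)$ (using that $\Delta$ is at least linear for an infinite subgroup, so the additive constant is absorbed).

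First I would record that, by Lemma \ref{hfd}, $\Delta(r)$ is equivalent to an increasing superadditive function; combined with \eqref{eq:rg-dist}, the hypothesis $g_K^G(r) \approx r$ gives $\Delta(r) \approx r$, i.e. $K$ is undistorted, which is the desired contradiction. More carefully: if $g_K^G(r) \le cr$ for all $r$, then by the left inequality in \eqref{eq:rg-dist} we get $\Delta(r) \le cr$, and since $\Delta$ is always $\succeq r$ (the observation after Definition \ref{distortion}, or the strict-increase argument in Lemma \ref{hfd} giving $\Delta(r) \ge r$), we conclude $\Delta(r) \approx r$. By the third item of Lemma \ref{findex}, an undistorted subgroup has $g_K^G(r) \approx g_K(r)$, but this is a consistency check rather than a needed step — the contradiction is already reached since "distorted" means precisely $\Delta(r) \not\approx r$.

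The only subtlety, and the one place to be slightly careful, is the sign issue in \eqref{eq:rg-dist}: the definition of $\Delta$ uses $|h|_X$ over all $h \in K$, so $\Delta(r) = \max\{|n| : |a^n|_X \le r\}$, and then both $a^{\Delta(r)}$ and $a^{-\Delta(r)}$ and everything between them of the appropriate length lie in $B_G(r) \cap K$; at minimum the $\Delta(r)+1$ distinct elements $a^0, a^1, \dots, a^{\Delta(r)}$ are counted (there is no need for the upper bound in \eqref{eq:rg-dist} at all — only the lower bound $g_K^G(r) \ge \Delta(r)$ is used). I expect no real obstacle here; the statement is an immediate corollary of Lemma \ref{hfd} and the elementary dictionary between relative growth and distortion for cyclic subgroups, and the entire proof is two or three lines once \eqref{eq:rg-dist} is written down.
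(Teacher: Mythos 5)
There is a genuine gap, and it sits exactly at the inequality you call the ``key observation.'' The lower bound $\Delta(r)+1\le g_K^G(r)$ is false: from $\Delta(r)=\max\{|n| : |a^n|_X\le r\}$ you may only conclude that $B_G(r)\cap K$ is \emph{contained in} $\{a^n : |n|\le \Delta(r)\}$, not that it contains all these powers. The set of exponents $n$ with $|a^n|_X\le r$ need not be an interval --- word length along a cyclic subgroup is not monotone in the exponent, so a single far-away power $a^{n_0}$ of small length does not force the intermediate powers $a^1,\dots,a^{n_0-1}$ into the ball. The correct relation is only the one-sided bound $g_K^G(r)\preceq \Delta_K^G(r)$ (this is Lemma \ref{ygs} of the paper), which points the wrong way for your argument. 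The paper's own results show the gap between the two invariants can be enormous: Theorem \ref{ellp} exhibits exponentially distorted cyclic subgroups whose relative growth is only about $2^{\sqrt{r}}$, and Theorem \ref{cm} Part (3) gives a cyclic subgroup whose distortion is not bounded by any recursive function while its relative growth is $o(r^2)$. If your claimed equivalence $g_K^G\approx\Delta_K^G$ held, both of those results would be impossible. So the statement is not ``an immediate corollary of Lemma \ref{hfd} and an elementary dictionary''; no such dictionary exists.

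The actual proof needs the subadditivity of word length rather than a pointwise comparison with $\Delta$. Distortion gives you, for every $\varepsilon>0$, a single exponent $m$ with $|a^m|_G\le\varepsilon m$. Writing an arbitrary exponent as $n=km+l$ with $0\le l<m$ and setting $c=\max\{|a^l|_G : 0\le l<m\}$, one gets $|a^n|_G\le k|a^m|_G+c\le\varepsilon n+c$ for \emph{all} $n$. Hence the ball of radius $\varepsilon n+c$ contains at least $n$ distinct powers of $a$, i.e.\ $g_K^G(r)\gtrsim\varepsilon^{-1}r-C(\varepsilon)$; since $\varepsilon$ is arbitrary, $g_K^G$ exceeds every linear function. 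The point is that one short representative propagates, via the triangle inequality, to a short representative for every power --- that is the idea missing from your write-up, and without it the lower bound on $g_K^G$ cannot be obtained.
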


\begin{proof}
Let $K$ be generated by an element $a$. Then by assumption that $K$ is distorted, we have that for any $d$ there exists a $r \geq 0$ so that $\Delta_K^G(r)>dr$. Letting $\Delta_K^G(r)=\max\{ |m|: |a^m|_G \leq r\} = |m_0|$ for some integer $m_0 =m_0(r)$ we have that $|a^{m_0}|_G \leq r \leq \frac{|m_0|}{d}$. Rephrasing, we may say that for every $\varepsilon >0$, we can find $a^{m}$ so that $|a^m|_G \leq \varepsilon m$. Let us fix $m=m(\varepsilon)$. Consider any $a^r$. Write $r=km+l$ where $0 \leq l < m$. Let $c=c(m)=\max\{|a^l|_G: 0 \leq l < m\}$. Then $a^r=(a^{m})^ka^l$ and so $|a^r|_G \leq k|a^m|_G+c \leq r \varepsilon +c$. Because $r$ was arbitrary, it follows that the relative growth function of $K$ is at least $\varepsilon^{-1}r+C$ for $C=C(\varepsilon)$. Because $\varepsilon$ was arbitrarily small, the relative growth function is not bounded from above by any linear function.
\end{proof}

\begin{lemma}\label{cm3}
If $H$ is a finitely generated subgroup of a finitely generated group $G$, then $g_H^G(r) \approx r$ implies that $\Delta_H^G(r)$ must also be equivalent to a linear function. That is to say, if the embedding is distorted, then the relative growth is non-equivalent to any linear function. Moreover, if $g_H^G(r)$ is $o(r^2)$, then $H$ is virtually infinite cyclic.
\end{lemma}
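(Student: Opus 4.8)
The plan is to establish the three assertions of the lemma, and it is convenient to prove the last one first, since it feeds the others. For that last assertion, note that $g_H(r)\preceq g_H^G(r)$ by \eqref{ng}, and the property ``$=o(r^2)$'' is preserved under $\preceq$ (replacing $r$ by $cr$ only multiplies the ratio $g(r)/r^2$ by a constant), so $g_H^G(r)=o(r^2)$ forces $g_H(r)=o(r^2)$, in particular $g_H(r)\preceq r^2$. By Gromov's theorem \cite{gromov}, $H$ is virtually nilpotent, and by the Wolf--Bass computation \cite{wolf,bass} (see \eqref{bass}) we get $g_H(r)\sim r^d$ for a nonnegative integer $d$. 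If $d\ge 2$ then $r^2\preceq r^d\sim g_H(r)$, so $g_H(r)/r^2\not\to 0$, contradicting $g_H(r)=o(r^2)$; hence $d\le 1$. A virtually nilpotent group of growth degree $d\le 1$ is finite (if $d=0$) or virtually infinite cyclic (if $d=1$, since then a nilpotent finite index subgroup has Hirsch length one and is therefore virtually $\z$). A finite $H$ makes $g_H^G$ bounded, so in the interesting case $H$ is infinite and then $H$ is virtually infinite cyclic.

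For the first assertion, assume $g_H^G(r)\approx r$; then $g_H(r)\preceq r$, and $H$ is infinite because $g_H^G$ is unbounded. The argument of the previous paragraph now applies with $d\le 1$ (from $g_H(r)\preceq r$) and $d\ge 1$ (an infinite finitely generated group has growth $\succeq r$), so $d=1$ and $H$ has a finite index infinite cyclic subgroup $K=\langle a\rangle_\infty$. Since $K\subseteq H$ we have $g_K^G(r)\le g_H^G(r)\approx r$, so $g_K^G(r)\preceq r$, i.e. $g_K^G$ is bounded above by a linear function. The proof of Lemma \ref{erlr} in fact shows that a distorted infinite cyclic subgroup of a finitely generated group has relative growth not bounded above by \emph{any} linear function; its contrapositive gives that $K$ is undistorted, $\Delta_K^G(r)\approx r$. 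Finally $[H:K]<\infty$, so the first item of Lemma \ref{findex} yields $\Delta_H^G(r)\approx\Delta_K^G(r)\approx r$, so $H$ is undistorted. The ``that is to say'' reformulation, that a distorted embedding has $g_H^G(r)\not\approx r$, is exactly the contrapositive of this.

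The only non-formal ingredient is the passage from subquadratic (respectively linear) absolute growth of $H$ to its virtual cyclicity, which we import wholesale from Gromov's polynomial growth theorem and the Wolf--Bass description of growth degrees of nilpotent groups; the rest is bookkeeping with the equivalences. The point that most warrants care — and the likeliest place for a slip — is verifying that the growth hypotheses genuinely descend along the inclusions $K\subseteq H\subseteq G$: that ``$=o(r^2)$'' and $\preceq$-bounds transfer from $g_H^G$ to $g_H$ and from $g_H^G$ to $g_K^G$, and that the finite index hypothesis of Lemma \ref{findex} is what lets us push the distortion conclusion back up from $K$ to $H$.
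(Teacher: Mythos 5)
Your proof is correct and follows essentially the same route as the paper's: pass from $g_H^G$ to the intrinsic growth $g_H$ via \eqref{ng}, invoke Gromov's theorem and the Bass formula to get virtual cyclicity, and then combine Lemma \ref{erlr} (in its sharper ``not bounded above by any linear function'' form, which you correctly note is what its proof actually gives) with Lemma \ref{findex} to transfer undistortedness between $K$ and $H$. The only cosmetic difference is that you argue directly with the growth degree and Hirsch length where the paper routes through Baumslag's embedding of a nilpotent finite-index subgroup into a product of a finite group and a torsion-free nilpotent group.
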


\begin{proof}
By assumption, and by Equation \eqref{ng}, $g_H^G(r) \succeq g_H(r)$, we have that $g_H(r)$ is equivalent to a linear function (respectively, bounded above by $r^2$). Therefore, by Gromov's Theorem, we have that $H$ is virtually (finitely generated) nilpotent. So $H$ has a finite index (finitely generated) nilpotent subgroup $M$. $M$ is embedded in a direct product of a finite and a (finitely generated) torsion-free nilpotent group $T$, by \cite{baumslag}. The growth functions of $H$, $M$ and $T$ are equivalent, by Lemma \ref{findex}. Thus, by Bass's formula, in the case where the relative growth is linear, we have that $1=\textrm{rk}(H/H')$, which implies that $H'$ is finite and $H$ is virtually cyclic: there exists an infinite cyclic subgroup $K$ of $H$ with finite index. Suppose by way of contradiction that the embedding of $H$ to $G$ is distorted. Then because $[H:K]<\infty$, the embedding of $K$ to $G$ is also distorted by Lemma \ref{findex}. This implies that the function $\frac{\Delta_K^G(r)}{r}$ is unbounded. Therefore, because $K$ is cyclic, it follows from Lemma \ref{erlr} the relative growth of $K$ in $G$ also has $\frac{g_K^G(r)}{r}$ unbounded. This is a contradiction to the hypothesis that the relative growth of $H$ in $G$ is equivalent to a linear function. Now, in the case where the relative growth is $o(r^2)$, we have by Bass's formula, Equation \eqref{bass}, and the fact that the growth rate is 2, either $T$ is free abelian of rank 1 or $T$ is free abelian of rank 2. If $T$ is free abelian of rank 2, then the growth and relative growth of $T$ and therefore of $H$ are at least quadratic, by equations \eqref{ng} and \eqref{bass}, which is a contradiction to the hypothesis. Therefore $T$ is infinite cyclic. In this case, $M$ is also virtually $\z$ and so is $H$.
\end{proof}

\begin{lemma}\label{ygs}
Let $G$ be a finitely generated group, and let $H$ be an infinite cyclic subgroup generated by an element $a \in G$. Then $$\Delta_H^G(r) \succeq g_H^G(r).$$
\end{lemma}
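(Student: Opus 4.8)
The plan is to exploit the one-dimensionality of $H$. Since $H=\langle a\rangle_\infty$ is infinite cyclic, I am free to choose the generating set $Y=\{a\}$ of $H$ (the distortion function is independent of this choice up to $\approx$), and then $|a^m|_Y=|m|$ for every $m\in\z$. With this choice both invariants become statements about one and the same set of exponents, namely
$$S_r=\{m\in\z:|a^m|_X\le r\},$$
where $X$ is the fixed finite generating set of $G$. Because $a$ has infinite order, distinct exponents give distinct elements of $H$, so $g_H^G(r)=\#S_r$; and by the choice of $Y$ we have $\Delta_H^G(r)=\max\{|m|:m\in S_r\}$.

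The key step is a single counting estimate. The set $S_r$ is symmetric about $0$, since $|a^m|_X=|a^{-m}|_X$, and it contains $0$. Writing $M=\Delta_H^G(r)$, every $m\in S_r$ satisfies $|m|\le M$, so $S_r\subseteq\{-M,-M+1,\dots,M\}$, a set of cardinality $2M+1$. Hence
$$g_H^G(r)=\#S_r\le 2M+1=2\Delta_H^G(r)+1.$$

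It remains to upgrade the bound $g_H^G(r)\le 2\Delta_H^G(r)+1$ to the asymptotic relation $g_H^G\preceq\Delta_H^G$, which is exactly $\Delta_H^G\succeq g_H^G$. Here I would invoke the fact recorded after Definition \ref{distortion}: since $H$ is infinite, $\Delta_H^G$ is at least linear up to equivalence, and in particular $\Delta_H^G(r)\ge 1$ once $r\ge|a|_X$ (as then $a\in B_G(r)$ forces $M\ge 1$). Consequently $2\Delta_H^G(r)+1\le 3\Delta_H^G(r)$ for all large $r$, and absorbing the finitely many small values of $r$ into the multiplicative constant yields $g_H^G(r)\preceq\Delta_H^G(r)$, as required.

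I do not expect a genuine obstacle here: the whole argument reduces to the observation that in a one-dimensional subgroup the number of short elements is at most twice the longest admissible exponent, plus one. The only points needing care are retaining the symmetry of $S_r$ (to get the factor $2$ rather than a one-sided count) and converting the additive constant $+1$ into the language of $\preceq$ by using the at-least-linear lower bound on distortion; neither step involves any nontrivial calculation.
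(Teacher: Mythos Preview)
Your argument is correct and is essentially the same as the paper's: both set $M=\Delta_H^G(r)=\max\{|m|:|a^m|_X\le r\}$, observe that $\{a^m:|a^m|_X\le r\}\subseteq\{a^{-M},\dots,a^M\}$, and conclude $g_H^G(r)\le 2M+1\approx\Delta_H^G(r)$. Your remark about symmetry of $S_r$ is harmless but unnecessary, since the containment $S_r\subseteq\{-M,\dots,M\}$ already follows directly from the definition of $M$.
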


\begin{proof}
We have that $$\Delta_H^G(r)=\max\{|a^k|_H : |a^k|_G \leq r\}=\max\{|k|:|a^k|_G \leq r \}=k_0$$ for some integer $k_0=k_0(r)$. Then if $|a^m|_G \leq r$ we have that $|m| \leq k_0,$ by the definition of $k_0$. That is, if we consider the set $$S=S(r)=\{1, a, a^{-1}, a^2, a^{-2}, \dots, a^{k_0}, a^{-k_0}\}$$ we have that $\{a^m \in H : |a^m|_G \leq r\} \subseteq S$. Therefore, $$g_H^G(r) \leq \#S = 2k_0+1 \approx \Delta_H^G(r)$$ as required.
\end{proof}

Note that for any infinite subgroup $H$ of a finitely generated group $G$, where $H$ is not locally finite, the relative growth function $g_H^G$ of the embedding is at least linear up to equivalence, as we will prove in Theorem \ref{2.2} Part $(1)$.

Combining Lemmas \ref{erlr} and \ref{ygs}, we have proved the following.

\begin{prop}\label{ldr}
An infinite cyclic subgroup of a finitely generated group is undistorted if and only if it has relative growth function equivalent to a linear function.
\end{prop}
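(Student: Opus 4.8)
The plan is to prove Proposition \ref{ldr} by combining the two directions that are already essentially established in the lemmas immediately preceding it. Recall that we want to show: an infinite cyclic subgroup $H = \langle a \rangle_\infty$ of a finitely generated group $G$ is undistorted if and only if $g_H^G(r) \approx r$.

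First I would handle the easy direction: if $H$ is undistorted, then $g_H^G(r) \approx r$. This is immediate from the third bullet of Lemma \ref{findex}: if $\Delta_H^G(r) \approx r$ then $g_H^G(r) \approx g_H(r)$, and since $H$ is infinite cyclic, $g_H(r) \approx r$. Alternatively, one can avoid quoting Lemma \ref{findex} and argue directly: undistortedness means $|a^k|_H = |k|$ is comparable to $|a^k|_G$, so the number of powers $a^m$ with $|a^m|_G \le r$ is comparable to $\#\{m : |m| \le c r\} = 2cr+1$, which is linear.

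For the converse, I would argue the contrapositive using the material already assembled. Suppose $H$ is distorted. Then by Lemma \ref{erlr}, the relative growth function $g_H^G$ is not equivalent to a linear function, which is exactly the negation of $g_H^G(r) \approx r$. Hence if $g_H^G(r) \approx r$ then $H$ must be undistorted. (One could equally invoke Lemma \ref{ygs}, $\Delta_H^G(r) \succeq g_H^G(r)$, in combination with the fact that distortion forces unbounded $\Delta_H^G(r)/r$ and hence unbounded $g_H^G(r)/r$ via Lemma \ref{erlr} — either route closes the loop.)

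There is no real obstacle here: both directions are corollaries of the preceding lemmas, so the proof is a two-line synthesis. The only point requiring a moment's care is making sure the equivalence $\approx$ and the "not bounded above by any linear function" conclusion of Lemma \ref{erlr} are genuinely complementary — i.e., that $g_H^G(r) \approx r$ for a superadditive (hence eventually monotone) function is equivalent to $g_H^G$ being $\preceq$ some linear function, which is clear since the lower bound $g_H^G(r) \succeq g_H(r) \approx r$ always holds by Equation \eqref{ng}. Thus the "if and only if" is genuinely an equivalence of the two stated conditions, and the proof simply records the combination of Lemmas \ref{erlr}, \ref{ygs}, and \ref{findex}.
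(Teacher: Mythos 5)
Your proposal is correct and matches the paper's own (very terse) proof, which simply records that the proposition follows by combining Lemma \ref{erlr} (distorted implies relative growth not linear) with Lemma \ref{ygs} and the lower bound from Equation \eqref{ng} for the other direction. Your substitution of Lemma \ref{findex} for Lemma \ref{ygs} in the easy direction is an equally valid bookkeeping choice and does not constitute a different argument.
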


\begin{rem}
Observe that Proposition \ref{ldr} does not hold in a larger class of groups than virtually infinite cyclic. Indeed, it suffices to let $H$ be finitely generated with growth function greater than linear, up to equivalence and consider the identity embedding of $H$ to itself. Moreover, the same example shows that Lemma \ref{ygs} cannot hold in a larger class of subgroups than virtually infinite cyclic. 

\end{rem}

\section{Relative Growth Functions for Subgroups}\label{ekh}

In this section we will may make use of Theorems \ref{olshd}, and \ref{olshc} to prove Theorem \ref{2.2}, a result for classifying which functions are equivalent to relative growth functions of cyclic subgroups in finitely generated groups. We begin with proving the necessary conditions.

\begin{proof} of Theorem \ref{2.2} Part $(1)$:

Let $G$ be generated by a finite set $X$ and suppose that $H$ has an infinite finitely generated subgroup $K$. Then there exists $c>0$ and a sequence in $K$ given by ${\bf h'} = \{h_i'\}$ such that $|h_{i+1}'|_X-|h_i'|_X \leq c$ for all $i$ and $|h_i'|_X \rightarrow \infty$.

Every function $f: \mathbb{N}  \to \mathbb{N} $ has a superadditive  closure $\bar f$ given by the following formula. For $r \in \mathbb{N} $, consider a partition $P$ of $r=n_1+ \cdots + n_s$ where $n_1, \dots, n_s$ are positive integers. One defines $$\bar f(r) = \max_P (f(n_1)+...+f(n_s)),$$ where the maximum is taken over all such partitions $P$ of $r$.

It suffices to prove that the relative growth function $g=g_H^G$ of $H$ in $G$ is  $\sim$-equivalent to its superadditive  closure $\bar g.$ The inequality $g(r)\le \bar g(r)$ is obvious.

To obtain an upper bound for $\bar g(r)$ we will estimate $g(n_1)+...+g(n_s)$ for an arbitrary partition of $r=n_1+\dots+n_s.$ Choose a finite subsequence $\{h_1=1, h_2,\dots, h_s\}$ of ${\bf h'}$ with $$n_1+2\sum_{k=2}^{i-1} n_k+n_i+(i-2)c<|h_i|_X\le n_1+2\sum_{k=2}^{i-1} n_k+n_i +(i-1)c \;\; for \;\; i\ge 2.$$

Let $i \in \{1, \dots, s\}$. Let $B_i$ be the ball in $G$ of center $h_i$ and radius $n_i$. Note that the distance in $G$ between $h_{i}$ and $h_{j}$ is at least $|h_j|_X-|h_i|_X>n_i+n_{j}$ if $1\le i<j\le s,$ and so the balls $B_1,\dots, B_s$ are pairwise disjoint. Every $B_i$ has exactly $f(n_i)$ elements from $H$ since each of them is centered at an element of $H$, and by definition of $g$ as the relative growth function. Now let $x \in B_i$. Then $|x|_X \leq |h_{i}^{-1}x|_X+|h_i|_X \leq n_i +n_1+2\sum_{k=2}^{i-1}n_k+n_i+(i-1)c$. Therefore all the balls  $B_1,\dots, B_s$ are contained in the ball $B$ of radius $R=n_1+2\sum_{k=2}^s n_i  +(s-1)c < 2r +sc\le (c+2)r$ centered at $1.$ Hence $g(n_1)+\cdots+g(n_s)\le g((c+2)r).$ Since this is true for arbitrary partition $r=n_1+\dots+n_s,$ we have $\bar g(r)\le g(Cr),$ where $C=c+2$ does not depend on $r,$  and so $\bar g(r) \sim g(r).$ 

\end{proof}

Now we proceed with the proof of Theorem \ref{2.2} Part $(2)$, which contrasts with Part $(1)$ of the theorem in case the subgroup is locally finite.

\begin{proof} of Theorem \ref{2.2} Part $(2)$:

Let $F: \mathbb{N} \rightarrow \mathbb{N}$ be any nondecreasing function with $\displaystyle\lim_{r \to \infty} F(r) = \infty$. We will show that there exists an embedding of $H$ into a finitely generated group $G$ so that the relative growth function satisfies $g_H^G \prec F$. In particular, this implies that $g_H^G$ is not equivalent to any superadditive  function, else we would have a contradiction by choosing $F(r) = r$, as any non-zero superadditive  function dominates a linear one.

Because $H$ is locally finite and countably infinite, we may write $H=\displaystyle\bigcup_{i=1}^{\infty} H_i$, where for each $i  \geq 1$, we have $H_i=\langle h_1,\dots, h_i \rangle$ for elements $h_1, \dots, h_i \in H$ with $\# H_i = n_i<\infty$ and $H_{i+1}>H_{i}$.

We will define a function $l:H \rightarrow \mathbb{N} $ so that $l$ satisfies the $(D)$ condition. First, we assign a length $\tilde{l}(h_i^{\pm 1})=l_i$ to every $h_i^{\pm 1}$ so that 

$\min(l_i, F(\sqrt{l_i}))>n_i$ and $l_{i+1} > l_i$.
The set $S=\{h_1^{\pm 1},h_2^{\pm 1},\dots\}$ generates $H$. Therefore, for $h \in H$ we have several possible expressions of the form 
\begin{equation}\label{hpsm}
h=h_{j_1} \cdots h_{j_s} \textrm{ where } h_{j_1},\dots,h_{j_s} \in S
\end{equation}
For any  expression $P$ of this form, we define $$l(h,P) =l_{j_1}+\cdots+l_{j_s}.$$ Finally, for $h \in H$, we let $$l(h) = \min_P l(h,P)$$ where $P$ ranges over all possible expressions \eqref{hpsm}. It is clear that $l(h_i) \leq l_i$. Because $h_i\notin H_{i-1}$, arbitrary factorization $P$ of $h_i$ must have a factor $h_j^{\pm 1}$ with $j \geq i$ whence $l(h_i,P) \geq l_j \geq l_i$, so $l(h_i) \geq l_i$ as well. Hence $l(h_i)=l_i$.

The conditions $(D1)$ and $(D2)$ follow from the definiton of $l$. We will show that $(D3)$ holds. We must estimate $\#\{h \in H: l(h)\le r\}$, for $r>0$. We may assume that $l_{i}\le r< l_{i+1}$ for some $i$. Then if $h \in H$ has $l(h) \leq r$, we have that $h$ belongs to $H_i.$ Hence $\#\{h: l(h)\le r\}\le n_i\le l_i\leq r ,$ and $(D3)$ follows.

Now using Theorem \ref{olshd} we may embed $H$ in a finitely generated group $G$ with $l(h)\le C|h|_G $ for some $C\ge 1$ and all $h\in H$. Choosing $Cr\in [l_i,l_{i+1})$ for some $i$, we have by use of the fact that $F$ is nondecreasing that $$g_H^G(r) = \#\{h \in H : |h|_G \leq r\} \le \#\{h \in H: l(h)\le Cr\}$$ $$\le n_{i} < F(\sqrt{l_i})\le F(\sqrt{Cr})\le F(r) $$ if $i$ is large enough, i.e. $g_H^G \prec F(r)$, as required.
\end{proof}

Finally, we can add that the necessary conditions are also sufficient, but only for infinite cyclic groups.

\begin{proof} of Theorem \ref{2.2} Part $(3)$:
Define a function $l: \z \rightarrow \{0,1,2,\dots,\} $ as follows. If $r>0$ then $l(r)=m$ where $m$ is the minimal number so that $f(m) \geq r.$ If $r<0$ let $l(r)=l(-r)$. Finally, let $l(0)=0$. We will verify that $l$ satisfies the $(C)$ condition of Theorem \ref{olshc}. The symmetry condition $(C1)$ is satisfied by definition of $l$. Next we show that $l$ satisfies $(C2)$; i.e. that it is subadditive. Let $r,s \in \z$ and $l(r+s)=k$. Then $k$ is mimimal such that $f(k) \geq |r+s|.$ Let $l(r)=R$ and $l(s)=S$, so $f(R) \geq |r|$ and $f(S) \geq |s|$. Suppose by way of contradiction that $k>R+S$. Then $f(R+S)<r+s$. But $f$ is superadditive, so $f(R+S) \geq f(R) + f(S) \geq |r|+|s| \geq |r+s|$, a contradiction. The condition $(C3)$ is satisfied because for $n \in \mathbb{N}$ we have that $\#\{r \in \mathbb{N} : l(r) \leq n\} \approx f(n)$. This follows because for $r>0$, we have by the definition of $l$ that $f(n) < r$ if and only if $l(r) >n$. Therefore, $\#\{r : r>0, l(r) \leq n\} = \#\{r:r>0,r \leq f(n) \}= f(n).$ Also, we have by hypotehsis that $f(n)$ is at most equivalent to an exponential function $2^n$. Therefore by Theorems \ref{olshd}, \ref{olshsolv} there exists a finitely generated (solvable) group $G$ generated by a set $T$ and an embedding of $H$ into $G$ so that the function $l$ is $\Theta$-equivalent to the length function on $G$, $g \mapsto |g|_{T}$, restricted to $H$. It remains to observe that $g_H^G(r) = \# \{h \in H : |h|_T \leq r\} \approx \# \{ h \in H : l(h) \leq r\} \approx f(r)$.
\end{proof}

It is worthwhile to note that Theorem \ref{2.2} Part $(3)$ is false even for non-virtually cyclic groups because we could not choose $f$ to be linear. Therefore, we ask the following question.

\begin{question}
Is it possile to find necessary and sufficient conditions for a function to be a relative growth function of an embedding of arbitrary finitely generated (not necesssarily cyclic) subgroup of a finitely generated group?
\end{question}

\section{Possible Distortion of Infinite Finitely Generated Groups}\label{pdicg}
In this section we will prove Theorems \ref{s12} and \ref{ellp}.

To setup the proof of Theorem \ref{s12}, part $(1)$, let $f$ be an increasing superadditive  function on $\mathbb{N}$ and $H$ an infinite group with finite generating set $Y$.
We will define an infinite sequence of elements $\{h_i\}$ in $H$ as well as sequences of natural numbers $\{l_i\}$ and $\{n_i=f(l_i)\}$. Let $l_1=2,$ so $n_1=f(l_1)$. Inductively, we let $l_{i+1}$ be the minimal number such that $f(l_{i+1})\ge 2f(l_i)$ and so
$n_{i+1}=f(l_{i+1})$ for $i\ge 1$. Finally, we choose $h_i \in H$ so that $|h_i|_H= n_i.$ The sequence is defined since $f$ is increasing. Since $f$ is superadditive , we have $f(l_i+l_i)\ge 2f(l_i),$ and so by minimality $l_i<l_{i+1}\le 2l_i$. We assign a weight $w(h_i^{\pm 1})=l_i.$

Every element $h \in H$ has many factorizations $P$ of the form  $h=x_1 \cdots x_k$, where $x_i \in Y^{\pm 1}$ or $x_i=h_j^{\pm 1}$ for some $j$. We define a function $l: H \rightarrow \mathbb{N}$ by setting, for any $h \in H$, \begin{equation}\label{ps} l(h) = \min_P \sum w(x_i) \end{equation} over all the factorizations of $h$ where $w(x_i)=1$ if $x_i \in Y^{\pm1}$. 

\begin{lemma}\label{cait}
The function $l$ defined in Equation \eqref{ps} satisfies the $(D)$ condition.
\end{lemma}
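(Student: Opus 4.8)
The plan is to verify conditions (D1), (D2), (D3) for the function $l$ defined in Equation \eqref{ps}. Conditions (D1) and (D2) will follow formally from the construction, just as in the proof of Theorem \ref{2.2} Part $(2)$: symmetry $l(h)=l(h^{-1})$ holds because reversing and inverting a factorization $h=x_1\cdots x_k$ yields a factorization of $h^{-1}$ of the same total weight (the generating set $Y^{\pm1}\cup\{h_j^{\pm1}\}$ is symmetric and $w(x)=w(x^{-1})$); subadditivity $l(h_1h_2)\le l(h_1)+l(h_2)$ holds because concatenating a factorization of $h_1$ with one of $h_2$ gives a factorization of $h_1h_2$; and $l(h)=0$ forces the empty factorization, i.e. $h=1$, since every generator has weight at least $1$. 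So the real content is (D3): a bound of the form $\#\{h\in H: l(h)\le r\}\le a^r$.

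To bound $\#\{h : l(h)\le r\}$, first I would observe that any $h$ with $l(h)\le r$ admits a factorization $h=x_1\cdots x_k$ of total weight at most $r$ in which every letter is either in $Y^{\pm1}$ (weight $1$) or equals $h_j^{\pm1}$ for some $j$ with $w(h_j^{\pm1})=l_j\le r$. Since $l_1=2$ and $l_i<l_{i+1}$, the indices $j$ that can appear satisfy $2\le l_j\le r$, so there are at most $r$ admissible values of $j$; hence the alphabet of letters usable in a weight-$\le r$ factorization has size at most $2|Y| + 2r \le c\cdot r$ for a constant $c$ depending only on $|Y|$ (for $r$ large; small $r$ can be absorbed into the constant $a$). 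Moreover the number of letters $k$ in such a factorization is at most $r$, because each letter contributes weight at least $1$. Therefore the number of possible factorizations, and a fortiori the number of distinct elements $h$, is at most $\sum_{k=0}^{r}(cr)^k \le (r+1)(cr)^r$. This is not literally of the form $a^r$, so the final step is to note $(r+1)(cr)^r \le a^r$ fails — instead I should sharpen the count by grouping the $h_j^{\pm1}$-letters and the $Y^{\pm1}$-letters separately.

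Here is the sharper count, which I expect to be the main obstacle and the step requiring care. Split a factorization into its $Y^{\pm1}$-letters (say $p$ of them, each of weight $1$) and its $h_j^{\pm1}$-letters (say $q$ of them, of weights $l_{j_1},\dots,l_{j_q}\ge 2$). Then $p + \sum_{t} l_{j_t}\le r$, so in particular $p + 2q\le r$, giving $q\le r/2$ and $p\le r$. The positions of the $q$ special letters among the $p+q$ total positions can be chosen in at most $\binom{p+q}{q}\le 2^{p+q}\le 2^{3r/2}$ ways; the $p$ ordinary letters are a word in $Y^{\pm1}$ of length $p\le r$, contributing at most $(2|Y|)^r$ choices; and the multiset of weights $l_{j_1},\dots,l_{j_q}$ with $\sum l_{j_t}\le r$ corresponds to choosing $q\le r/2$ of the $l_i$'s with repetition, which — since consecutive $l_i$ satisfy $l_{i+1}\le 2l_i$ and $l_i\ge 2$, so the $l_i\le r$ number at most $\log_2 r + O(1)$ distinct values, but more crudely at most $r$ values — is bounded by the number of ways to write an integer $\le r$ as an ordered sum of $\le r/2$ parts each $\ge 2$, which is at most $2^r$. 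Multiplying these gives $\#\{h: l(h)\le r\}\le (2|Y|)^r\cdot 2^{3r/2}\cdot 2^r = a^r$ with $a = 2^{5/2}|Y|$, which is exactly (D3). (One can be even more economical using $l_{i+1}\le 2l_i$ to bound the number of distinct weights by $O(\log r)$, but the crude bound already suffices.) I would then simply assemble these three verifications into the proof.
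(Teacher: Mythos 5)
Your proof is correct, and its overall shape matches the paper's: (D1) and (D2) are treated as formal consequences of the definition of $l$ as a minimum over weighted factorizations, and the content is the count in (D3). The interesting difference is in how that count is done. The paper encodes a minimal factorization by a ``labelled partition'' of its total weight $R\le r$ into parts equal to $1$ or to some $l_j$, and bounds the number of partitions by $a^r$ with $a=1+\max\{j:l_j\le r\}$; that $a$ grows with $r$ (e.g.\ when $f$ is exponential one gets $l_{i+1}=l_i+1$, so $a\approx r$), so the paper's final bound $\bigl(a(2|Y|+2)\bigr)^{2r}$, read literally, does not have a constant base. Your count avoids this: by separating the $Y^{\pm1}$-letters from the $h_j^{\pm1}$-letters and using that the heavy letters have weights $\ge 2$ summing to at most $r$, you bound the ordered sequence of heavy weights by the number of compositions of an integer $\le r$ into parts $\ge 2$, which is genuinely at most $2^r$, and the distinctness of the $l_j$ lets you recover the indices from the weights. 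This is the step that actually delivers a bound of the form $a^r$ with $a$ independent of $r$, so your route is a tightening of the paper's argument rather than merely a paraphrase of it.

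Two small points. First, after fixing the positions, the $Y$-letters, and the ordered weight sequence, you must still choose the sign of each heavy letter $h_{j_t}^{\pm1}$; this is an extra factor $2^{q}\le 2^{r/2}$ and only changes your constant. Second, your parenthetical claim that $l_{i+1}\le 2l_i$ bounds the number of distinct values $l_i\le r$ by $O(\log r)$ has the inequality pointing the wrong way: $l_{i+1}\le 2l_i$ bounds the weights from above and hence bounds their number below $r$ from \emph{below} by about $\log_2 r$; in the exponential example above there are about $r$ of them. You never use this remark, and the composition bound does not need it, so neither issue affects the validity of the proof.
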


\begin{proof}
It is clear that $l$ satisfies the conditions $(D1)$ and $(D2)$. To show that $l$ also satisfies $(D3)$, we will count the number of elements  $h \in H$ which satisfy $l(h)= R\le r$. To estimate, we partition each $R \le r$ as $r_1+...+r_k$ with the summands either equal to $1$ or of the form $l_i$, where $i \ge 1$.  Assign to every summand $r_i$, a label from $Y^{\pm 1}$ (if $r_i=1)$ or  $\pm$ (if $r_i=l_j)$, the sign depending on whether the factor is $h_j$ or $h_j^{-1}$. 
The number of partitions of $R$ is bounded by $a^r$, where $a=1+\max \{j : l_j \leq r\}$. Thus the number of different labelings of a fixed partition by the finite alphabet is bounded by $(2|Y|+2)^r$.

Since a labeled partition determines the product $P$ (because all the numbers $l_1,l_2,\dots$ are different and greater than one), the number of elements  $h$ with $l(h) \leq r$ is bounded by $$\displaystyle\sum_{R=1}^r \#\{ h \in H : l(h) = R\} = ra^{r}(2|Y|+2)^r \leq (a(2|Y|+2))^{2r}.$$ This proves that (D3) holds.  
\end{proof}

By Theorems \ref{olshd} and \ref{olshsolv}, $H$ embeds in a finitely generated $G$ (solvable if $H$ is solvable) such that $l(h) \approx |h|_G$ for $h \in H$ and so that the distortion of $H$  in $G$ is equivalent to  $d(r)=\max\{|h|_H : l(h)\le r\}$. 

\begin{lemma}\label{wk}
We have that  $d(r) \approx f(r)$.
\end{lemma}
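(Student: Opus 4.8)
The plan is to establish the two inequalities $d(r) \preceq f(r)$ and $f(r) \preceq d(r)$ separately, using the construction of the sequences $\{h_i\}$, $\{l_i\}$, $\{n_i = f(l_i)\}$ and the weight function $l$ from Equation \eqref{ps}. Recall $d(r) = \max\{|h|_H : l(h) \le r\}$, and that we arranged $|h_i|_H = n_i = f(l_i)$, $w(h_i^{\pm 1}) = l_i$, $l_i < l_{i+1} \le 2l_i$, and $f(l_{i+1}) \ge 2 f(l_i)$.

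First I would prove the lower bound $d(r) \succeq f(r)$. Given $r$, pick the largest index $i$ with $l_i \le r$. Since $l(h_i) \le w(h_i) = l_i \le r$, the element $h_i$ is counted in $d(r)$, so $d(r) \ge |h_i|_H = f(l_i)$. Now because $l_{i+1} \le 2 l_i$ and $i$ is maximal, we have $r < l_{i+1} \le 2 l_i$, hence $l_i > r/2$, and since $f$ is increasing, $f(l_i) \ge f(\lceil r/2 \rceil)$. Thus $d(r) \ge f(\lceil r/2 \rceil)$, which gives $f(r) \preceq d(r)$ (absorbing the constant $2$ into the $\approx$-equivalence). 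One should check the edge case of small $r$ separately, but this is routine since $l_1 = 2$.

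Next I would prove the upper bound $d(r) \preceq f(r)$. Take any $h \in H$ with $l(h) \le r$ and a factorization $P$ realizing (or nearly realizing) the minimum in \eqref{ps}: $h = x_1 \cdots x_k$ with total weight $\sum w(x_i) \le r$. Bounding $|h|_H$ by the triangle inequality in $H$, a factor $x_i \in Y^{\pm 1}$ contributes $1 = w(x_i)$ to $|h|_H$, and a factor $x_i = h_j^{\pm 1}$ contributes $|h_j|_H = f(l_j)$ while carrying weight $l_j = w(x_i)$. So it suffices to show $f(l_j) \preceq$ (something summing nicely), i.e. to exploit the exponential gaps $f(l_{j+1}) \ge 2 f(l_j)$. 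The key observation is: if the factorization uses weights $l_{j_1}, \dots, l_{j_k}$ (with multiplicity) summing to at most $r$, then $\sum_m f(l_{j_m})$ is dominated — up to a constant factor coming from the geometric series $\sum 2^{-t}$ — by $f$ evaluated at the total weight. Concretely, group the factors by index; using superadditivity of $f$ and $f(2l_i) \ge 2f(l_i)$ one shows $\sum_m f(l_{j_m}) \le C f\!\left(\sum_m l_{j_m}\right) \le C f(r)$ for an absolute constant $C$. Combining, $|h|_H \le r + C f(r) \preceq f(r)$, since $f$ is at least linear (being increasing and superadditive with $f(1) \ge 1$, so $f(r) \ge r$).

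The main obstacle I anticipate is the second (upper bound) step: turning the weight bound $\sum l_{j_m} \le r$ into the growth bound $\sum f(l_{j_m}) \le C f(r)$. This requires carefully using the doubling property $f(l_{i+1}) \ge 2 f(l_i)$ together with superadditivity of $f$ to control sums of $f$-values by $f$ of the sum — essentially a convexity-type estimate where the geometric spacing of the $l_i$ prevents the sum of $f$-values from exceeding a constant multiple of $f(r)$. One has to be slightly careful because the same index $j$ can appear many times in the factorization; but repeated use of a single $h_j^{\pm 1}$ is handled directly by superadditivity of $f$ (if $h_j^{\pm 1}$ appears $t$ times, it contributes weight $t l_j$ and length $t f(l_j) \le f(t l_j)$ by superadditivity), and distinct indices are then handled by the geometric-gap argument. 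Once this inequality is in place, both directions follow and $d(r) \approx f(r)$.
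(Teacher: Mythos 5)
Your proposal is correct and follows essentially the same route as the paper: the lower bound via $d(r)\ge |h_i|_H=f(l_i)\ge f(\lceil r/2\rceil)$ for the largest $i$ with $l_i\le r$, and the upper bound by converting a minimal weighted factorization of $h$ into the estimate $|h|_H\le \sum_j k_j f(l_j)+k_0\le f\bigl(\sum_j k_j l_j+k_0\bigr)\le f(r)$. The only difference is that the ``main obstacle'' you anticipate dissolves: the inequality $\sum_m f(l_{j_m})\le f\bigl(\sum_m l_{j_m}\bigr)$ is an immediate consequence of superadditivity alone (with constant $C=1$), so no geometric-gap or doubling argument is needed, and the paper indeed applies superadditivity in exactly this one-line way.
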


\begin{proof}
Assume that $r>1$. Then  $l_i \le r < l_{i+1}\le 2l_i$ for some $i$. Let $l(h)\le r$, where $h$ is chosen so that $d(r) = |h|_Y$. Suppose that the minimal factorization $P$ of  $h$ has  $k_j$ factors of length $l_j$ in $H$, for $j \in \mathbb{N}$. 
Then $r\ge l(h)=k_il_i+\cdots+k_1l_1 +k_0,$
and so $|h|_H \leq  k_i n_i+\cdots+k_1 n_1+k_0 = k_i f(l_i)+\cdots+k_1f(l_1)+k_0 \le f(k_il_i+\cdots+k_1l_1+k_0) \le f(r)$.  Therefore  $d(r)\le f(r).$

On the other hand, $d(l_i)\ge |h_i|_H = n_i = f(l_i)$ since $l(h_i) \le l_i$.  Therefore 
$d(r) \ge d(l_i) \ge f(l_i) \ge f([r/2+1]).$  Therefore  $f(r) \approx d(r)$, as desired.   
\end{proof}

Now Theorem \ref{s12} part $(1)$ follows from Lemmas \ref{cait} and \ref{wk}. Theorem \ref{s12} Part $(2)$ follows by Lemma \ref{hfd}.\\

To setup the proof of Theorem \ref{s12}, part (3), we will use one of Philip Hall's groups: in \cite{hall} it was proved that every countable abelian group $C$ is isomorphic to the center of a solvable (center-by-metabelian) group $G$ with two generators. We choose $C$ to be the countable direct product of groups of order two. That is, we let $H$ be a two-generated group with generating set $Y$ with infinitely generated central subgroup $C$ with generating set $\{c_i : i \in \z\}$ so that the order of each $c_i$ is $2$. Let $g: \mathbb{N} \to \mathbb{N}$ be an increasing function. 

By induction, we will define a sequence of central elements $\{h_i\}$ of $H$ depending on $g$, and an increasing sequence of positive integers $\{d_i\}$. Let $h_1=c_1$ and $d_1=1$. Then 
set $d_i=|h_1|_Y+\dots+|h_{i-1}|_Y$, and let $h_i$ be an element of
minimal $Y$-length in $C$ such that  $|h_i|_Y\ge  g(d_i)$; so in particular, $d_i\ge i$.

To define a length function $l$ on $H$, we prescribe a weight $d_i=w(h_i)$
to each $h_i^{\pm 1}$, and $w(x_1)=1$ if $x_i \in Y^{\pm1}$. Then for any $h \in H$, we have many factorizations $P$ of $h=x_1\cdots x_k$ where $x_i \in Y^{\pm 1}$ or $x_i=h_j^{\pm1}$ for some $j$.  We define a length function on $H$ as in \eqref{ps}: for any $h \in H$, 
\begin{equation}\label{pct}
l(h) = \min_P \displaystyle\sum_i w(x_i). 
\end{equation}

\begin{lemma}\label{da}
The function $l$ defined in Equation \eqref{pct} satisfies the $(D)$ condition.
\end{lemma}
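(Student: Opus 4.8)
The plan is to follow exactly the template of Lemma \ref{cait}, since the length function in Equation \eqref{pct} has the same shape as the one in Equation \eqref{ps} --- a minimum over factorizations into generators of $Y$ (weight $1$) and the special elements $h_j^{\pm 1}$ (weight $d_j$). Conditions $(D1)$ and $(D2)$ are immediate from the definition: $l$ is symmetric because $Y^{\pm 1}$ and the set $\{h_j^{\pm 1}\}$ are symmetric, so reversing and inverting a factorization of $h$ gives a factorization of $h^{-1}$ of the same total weight; and $l(h_1h_2) \le l(h_1) + l(h_2)$ because concatenating minimal factorizations of $h_1$ and $h_2$ gives a factorization of $h_1h_2$. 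Also $l(h) = 0$ forces the empty factorization, i.e. $h = 1$. So the only real content is $(D3)$.

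For $(D3)$ I would count, for fixed $r$, the elements $h \in H$ with $l(h) \le r$ by the same bookkeeping as in Lemma \ref{cait}. Any such $h$ has a factorization $P$ with total weight $R \le r$; write $R = r_1 + \dots + r_k$ where each $r_t$ is either $1$ or one of the weights $d_j$. Since each $d_j \ge 1$, we have $k \le r$, so the number of ordered partitions of $R$ into such summands is at most $a^r$ where $a = 1 + \max\{j : d_j \le r\}$ (there are at most $a$ choices --- namely $1$ or one of $d_1, \dots, d_{a-1}$ --- for each of at most $r$ summands). To each summand equal to $1$ we attach a label in $Y^{\pm 1}$, and to each summand equal to some $d_j$ we attach a sign recording whether the factor is $h_j$ or $h_j^{-1}$; crucially, since the $d_j$ are strictly increasing (hence distinct) and all $\ge 1$, the summand value $d_j$ together with the sign determines the factor $h_j^{\pm 1}$, so a labeled partition determines the product $P$ and hence $h$. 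The number of labelings of a fixed partition is at most $(2|Y| + 2)^r$. Multiplying, $\#\{h \in H : l(h) \le r\} \le \sum_{R=1}^{r} a^r (2|Y|+2)^r \le r \cdot a^r (2|Y|+2)^r \le (a(2|Y|+2))^{2r}$, which is of the required form $b^r$ for a suitable constant $b$. This establishes $(D3)$.

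The only point that differs from Lemma \ref{cait}, and the one I would be slightly careful about, is the finiteness of $a = 1 + \max\{j : d_j \le r\}$: this needs the sequence $\{d_i\}$ to be genuinely increasing and unbounded, which holds because the construction makes $\{d_i\}$ strictly increasing with $d_i \ge i$. This is the analogue of the observation in Lemma \ref{cait} that "$a = 1 + \max\{j : l_j \le r\}$" is finite because the $l_j$ grow. I do not anticipate a real obstacle here; the argument is a direct transcription of the proof of Lemma \ref{cait}, with $d_j$ in place of $l_j$, and the fact that the $h_j$ are central plays no role in verifying the $(D)$ condition (it will matter only later, when computing the distortion).
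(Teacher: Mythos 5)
Your argument is correct in substance, but it is not the route the paper takes for Lemma \ref{da}. The paper's proof exploits exactly the structure you set aside as irrelevant: since the $h_i$ are central and of order $2$, a minimal-weight factorization of $h$ can be normalized to $z_1\cdots z_s v$ with each $z_i\in\{h_i,1\}$ and $v$ a word in $Y^{\pm 1}$ of length at most $r$; counting subsets (at most $2^s$ with $s\le r$, since $d_s\ge s$) times words (at most $4^r$) gives the bound $8^r$ immediately. Your generic labeled-partition count transplanted from Lemma \ref{cait} also works and has the virtue of not using centrality or the orders of the $h_i$, but two small points deserve care. First, unlike in Lemma \ref{cait}, where all the special weights satisfy $l_j\ge 2$, here $d_1=1$ coincides with the weight of the letters of $Y^{\pm 1}$, so a summand of value $1$ does not by itself distinguish a $Y$-letter from $h_1^{\pm 1}$; the label alphabet for weight-$1$ summands must be $Y^{\pm 1}\cup\{h_1^{\pm 1}\}$, which is exactly what your factor $(2|Y|+2)^r$ accommodates, so nothing breaks, but your statement ``to each summand equal to $1$ we attach a label in $Y^{\pm 1}$'' should be amended. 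Second, your $a=1+\max\{j: d_j\le r\}$ grows with $r$ (it is of order $r$, since $d_j\ge j$), so $(a(2|Y|+2))^{2r}$ is not literally of the form $b^r$ for a constant $b$; to land on a genuine exponential bound one should instead bound the number of ordered partitions of $R\le r$ into positive parts by $2^{R-1}\le 2^r$, independent of which part-values are allowed (the same remark applies to the printed proof of Lemma \ref{cait}, from which you inherited the $a^r$ estimate). With those two adjustments your count gives $\#\{h\in H: l(h)\le r\}\le r\cdot 2^{r}(2|Y|+2)^{r}$, which suffices for (D3), so the gap is cosmetic rather than structural; still, the paper's normalization argument is the shorter and cleaner path here.
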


\begin{proof}
Again the conditions (D1) and (D2) are clearly satisfied by $l$, and we want to check (D3). Assume that $l(h)\le r$. Since the elements $h_i$ are all central and have order $2$, a minimal factorization of $h$ has the form $z_1\dots z_s v$, where each $z_i$ is either $h_i$ or $1$, and
$v$ is a word of length at most $r$ in $Y^{\pm 1}$. Therefore the number of possible $v$'s
is bounded by $4^r$, and the number of products $z_1\dots z_s$ is at most
$2^s$. Here $s < r$  since otherwise $r\ge d_s\ge s>r$, a contradiction. Thus the number of elements $h\in H$ with $l(h)\le r$ is at most $8^r$, which is sufficient for (D3).
\end{proof}

By Theorems \ref{olshd} and \ref{olshsolv} we can embed the group $H$ into a finitely generated (solvable if $H$ is solvable) group $G$, such that the distortion is equivalent to the function $f(r) = \max \{|h|_Y :  h\in H, l(h)\le r\}$.

\begin{lemma}\label{vd}
If the function $g$ is bounded by a linear function on some infinite sequence, but grows faster than any recurisve function, then the distortion function of $H$ in $G$ is not bounded by a recursive function, yet it is bounded by a linear function on an infinite subset of $\mathbb{N}$.
\end{lemma}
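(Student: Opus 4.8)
The plan is to read off both properties of the distortion function $f(r) = \max\{|h|_Y : h \in H,\ l(h) \le r\}$ directly from the construction of the weights $d_i = w(h_i)$ and the choice of the elements $h_i$. First I would establish the lower bound that makes $f$ non-recursive. Since $l(h_i) \le d_i$ by the definition \eqref{pct} (the single-factor factorization $h = h_i$ has weight $d_i$), we get $f(d_i) \ge |h_i|_Y \ge g(d_i)$ for every $i$. Because $\{d_i\}$ is infinite and $g$ grows faster than any recursive function, $f$ cannot be bounded above by a recursive function: if it were bounded by the recursive $\phi$, then $g(d_i) \le f(d_i) \le c\,\phi(c d_i)$ for all $i$, and one checks this forces $g$ to be dominated by a recursive function along $\{d_i\}$ and then — using that $g$ is increasing — everywhere, contradicting the hypothesis. (I would spell out this last monotonicity step: for arbitrary $n$ pick the least $d_i \ge n$, so $g(n) \le g(d_i) \le c\phi(cd_i)$, and $d_i$ is itself a recursive function of $n$ once we know the $d_i$ are listable, which they are since $H$ has solvable word problem and the $h_i$ are defined by a minimization.)

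Second I would prove the upper bound along an infinite set. Here the key observation is the structure of minimal factorizations already isolated in the proof of Lemma \ref{da}: since every $h_i$ is central of order two, a minimal factorization of any $h$ with $l(h) \le r$ has the form $z_1 \cdots z_s v$ with each $z_i \in \{h_i, 1\}$, each relevant index $i < r$ (as $d_i \le l(h) \le r$ forces $i \le d_i \le r$, and in fact $i<r$), and $v$ a word of $Y$-length at most $r$. Now suppose $r$ lies strictly below the next jump, i.e. $r < d_{k+1}$ where $d_k$ is the largest $d_i$ that is $\le r$; more precisely I want an $r$ with $r < d_{k+1}$ and, say, $r \ge d_k$. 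Then only $h_1, \dots, h_k$ can appear, so $|h|_Y \le |h_1|_Y + \cdots + |h_k|_Y + r = d_{k+1} + r$ — wait, I should use $d_{k+1} = |h_1|_Y + \cdots + |h_k|_Y$, which is exactly the recursive definition of the sequence. Hence for every such $r$ we get $f(r) \le d_{k+1} + r$. So the bound $f(r) \le d_{k+1} + r$ is only useful when $d_{k+1}$ is comparable to $r$; the hypothesis that $g$ — and hence, along the subsequence realizing $|h_i|_Y$ close to $g(d_i)$, the gap $d_{i+1} - d_i = |h_i|_Y$ — is bounded by a linear function on an infinite set is what I would leverage. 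Concretely, if $g(d_i) \le A d_i$ for infinitely many $i$, then $d_{i+1} = d_i + |h_i|_Y \le d_i + (g(d_i)+O(1)) \le (A+1)d_i + O(1)$ for those $i$, so for $r$ in the window $[d_i, d_{i+1})$ we obtain $f(r) \le d_{i+1} + r \le (A+2) r + O(1)$, giving $f(r) \le C r$ on the infinite union of these windows' left endpoints (or on the infinite set $\{d_i : g(d_i) \le A d_i\}$ itself, which suffices).

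The main obstacle I anticipate is bookkeeping around what "bounded by a linear function on an infinite sequence" means for $g$ versus for $f$, and making sure the window $[d_i, d_{i+1})$ on which the linear bound for $f$ holds is genuinely infinite — this is clear because the $d_i$ strictly increase, so the left endpoints $d_i$ (restricted to the infinitely many $i$ with $g(d_i)$ small) form the required infinite subset of $\mathbb{N}$. A secondary point needing care is that $f$ is a distortion function only up to $\approx$, so all the inequalities above should be read up to the constant $c$ from the equivalence $l(h) \approx |h|_G$; since $\approx$ preserves both "not recursive" and "linear on an infinite set", this does not affect the conclusion. I would close by noting that a function that is linear on an infinite set cannot be $\approx$-equivalent to a superadditive function $\psi$, since a nonzero superadditive $\psi$ satisfies $\psi(r) \ge \lfloor r/r_0 \rfloor \psi(r_0)$ and hence, being forced to grow faster than linear wherever it is not itself linear, cannot be matched below by something that dips to linear size infinitely often while being globally non-recursive — this is exactly the clause asserted parenthetically in Theorem \ref{s12}(3), so I would simply cite that reduction rather than reprove it here.
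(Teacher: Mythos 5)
Your lower-bound argument for non-recursiveness is the paper's: $l(h_i)\le d_i$ gives $f(d_i)\ge |h_i|_Y\ge g(d_i)$ along the increasing sequence $\{d_i\}$, and $g$ outgrowing every recursive function kills any recursive upper bound on $f$ (your extra detour through the listability of $\{d_i\}$ is unnecessary but harmless). The genuine gap is in your upper bound. You bound $f(r)\le d_{k+1}+r$ for $r\in[d_k,d_{k+1})$ and then need $d_{k+1}=O(r)$, which you derive from $d_{k+1}=d_k+|h_k|_Y$ together with the claim $|h_k|_Y\le g(d_k)+O(1)$. But the construction only guarantees $|h_k|_Y\ge g(d_k)$: $h_k$ is chosen of minimal $Y$-length in $C$ \emph{subject to} $|h_k|_Y\ge g(d_k)$, and nothing forces the set of lengths realized by elements of the infinitely generated subgroup $C$ to have bounded gaps, so the minimal admissible length could far exceed $g(d_k)$. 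Hence $d_{k+1}$ need not be comparable to $d_k$, your windows give no linear bound, and the argument does not close. (It also leans on the hypothesis that $g$ is linear on an infinite sequence in a way the conclusion does not actually require.)

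The paper's proof evaluates just \emph{below} a threshold rather than just above it: take $r=d_i-1$. Then a minimal factorization $z_1\cdots z_s v$ of any $h$ with $l(h)\le r$ can involve only $h_1,\dots,h_{i-1}$, since $w(h_j)=d_j\ge d_i>r$ for $j\ge i$; and by the very definition $d_i=|h_1|_Y+\cdots+|h_{i-1}|_Y$ one gets $|z_1\cdots z_s|_Y\le d_i=r+1$ and $|v|_Y\le l(h)\le r$, hence $|h|_Y\le 2r+1$ and $f(r)\le 2r+1$ on the infinite set $\{d_i-1\}$ --- unconditionally, with no upper control on $|h_i|_Y$ and no use of the linearity of $g$. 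You are one step away from this: instead of $r\in[d_k,d_{k+1})$ with the useless bound $d_{k+1}+r$, take $r=d_{k+1}-1$, where the total $Y$-length of the usable $h_j$'s is exactly $r+1$ by construction.
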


\begin{proof}
The embedding is distorted since $f(d_i)\ge g(d_i)$ for the increasing sequence $d_1,d_2,\dots$ because $l(h_i)\le d_i$ while $|h_i|_Y\ge g(d_i)$.  The distortion is not recursive if $g$ grows faster than arbitrary recursive function.

It suffices to show that the function  $f$ is bounded by a linear function on an infinite subset of $\mathbb{N}$, since $f \approx \Delta_H^G$. For this goal we consider an element $h\in H$ with $l(h) \le d_i-1$ for a fixed $i$ and its minimal factorization $h=z_1\dots z_s v$ as above.  It follows from the definition of weights, that none of $z_1,\dots, z_s$  belongs to $\{h_i, h_{i+1},\dots\}$, and so $|z_1\dots z_s|_Y\le |h_1|_Y+\dots+ |h_{i-1}|_Y\le d_i$. Also we  have $|v|_Y\le d_i-1$ since $l(h)\ge |v|_Y$ by the definition of $l$ and $v$. Hence $|h|_Y\le 2d_i-1,$
and so $f(r)\le 2r+1$ on the increasing infinite sequence  of the numbers $r=d_i-1$, as desired.
\end{proof}

\begin{rem}\label{bjo}
If a function $F$ on $\mathbb{N}$ is bounded by a linear function on an infinite subset of $\mathbb{N}$ and is equivalent to a superadditive function, then it is linear up to equivalence.
\end{rem}

\begin{cor}\label{spra}
The distortion function $\Delta_H^G$ is not equivalent to a superadditive function.
\end{cor}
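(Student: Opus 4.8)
The plan is to obtain Corollary~\ref{spra} as a short consequence of Lemma~\ref{vd} and Remark~\ref{bjo}, once $g$ is chosen well. First I would fix an increasing function $g:\mathbb N\to\mathbb N$ that is not bounded above by any recursive function and yet is bounded above by a linear function on an infinite subset of $\mathbb N$. Such a $g$ exists: start from any increasing function $B$ that eventually exceeds every recursive function, and define $g$ simultaneously with the sequence $(d_i)$ of the construction preceding Lemma~\ref{da}. This interleaving is legitimate because each $d_i=|h_1|_Y+\dots+|h_{i-1}|_Y$ depends only on the values of $g$ on $\{d_1,\dots,d_{i-1}\}$; at stage $i$ one lets $g$ be constant on the block $[d_i,d_{i+1})$ with $g(d_i)>B(d_i)$. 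Then $g$ is not recursively bounded, and since $d_{i+1}\ge |h_i|_Y\ge g(d_i)$ we get $g(d_{i+1}-1)=g(d_i)\le d_{i+1}\le 2(d_{i+1}-1)$, so $g$ is bounded by a linear function along the infinite set $\{d_i-1:i\ge 2\}$.

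Next I would run the construction and the embedding $H\hookrightarrow G$ preceding Lemma~\ref{vd} with this $g$. Lemma~\ref{vd} then gives that the embedding is distorted, that $\Delta_H^G$ is not bounded above by any recursive function, and that $\Delta_H^G$ is bounded above by a linear function on an infinite subset of $\mathbb N$. Now suppose for contradiction that $\Delta_H^G$ is $\approx$-equivalent to a superadditive function. Then $\Delta_H^G$ satisfies both hypotheses of Remark~\ref{bjo}, so $\Delta_H^G\approx r$. But $\Delta_H^G\approx r$ contradicts the distortedness of the embedding; more concretely, $\Delta_H^G\approx r$ implies $\Delta_H^G(r)\le c^2r$ for some constant $c$, hence $\Delta_H^G$ would be bounded above by a recursive function, against Lemma~\ref{vd}. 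Therefore $\Delta_H^G$ is not equivalent to any superadditive function.

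Everything past the choice of $g$ is immediate from the two cited results, so the only delicate point — and the one I would expect to cost a little care — is verifying that a single $g$ can be both super-recursive in growth and linearly bounded on an infinite set while remaining compatible with the recursive definition of the $d_i$; the inequality $d_{i+1}\ge g(d_i)$ is precisely what makes this possible.
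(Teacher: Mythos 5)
Your proposal is correct and follows essentially the same route the paper intends: Corollary~\ref{spra} is an immediate consequence of Lemma~\ref{vd} (the distortion is linearly bounded on an infinite subset of $\mathbb{N}$ but not recursively bounded) combined with Remark~\ref{bjo} (a function with both a linear bound on an infinite subset and a superadditive $\approx$-representative is $\approx r$, hence recursively bounded), which is exactly your contradiction. Your extra care in interleaving the definition of $g$ with the $d_i$ is harmless but not needed — any increasing $g$ dominating all recursive functions while linearly bounded on some infinite sequence works, and the linear bound on the distortion along $\{d_i-1\}$ already comes from the construction in the proof of Lemma~\ref{vd} rather than from that property of $g$.
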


\begin{lemma}\label{emc}
There exists a finitely presented group $K$ containing the group $H$ as a distorted subgroup such that $\Delta_H^K$ is bounded by a linear function on an infinite subset of $\mathbb{N}$, but is not bounded by a linear function on the entire set $\mathbb{N}$.
\end{lemma}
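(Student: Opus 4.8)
The plan is to embed the two-generated solvable group $H$ (with its central subgroup $C\cong\prod_{i}\z/2\z$) into a finitely presented group $K$ using a Higman-type embedding theorem while keeping track of distortion. Recall that $H$ is recursively presented (indeed, center-by-metabelian and two-generated, with $C$ generated by order-two central elements $c_i$), so by the Higman Embedding Theorem $H$ embeds into a finitely presented group; but to control the distortion we want a version that records how the word length in $K$ relates to a prescribed length function on $H$. Concretely, I would choose the function $g$ in the construction preceding Lemma \ref{vd} to be a \emph{recursive} increasing function that nonetheless grows faster than any fixed polynomial (or faster than any prescribed recursive function we like, up to the constraints of the embedding theorem), so that the length function $l$ of Equation \eqref{pct} is itself recursive. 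Then the associated metric distortion data is computable, and one can appeal to an effective Higman-style embedding (as in the work of Olshanskii–Sapir, Birget–Olshanskii–Rips–Sapir, etc.) that embeds $H$ into a finitely presented $K$ so that $\Delta_H^K \approx f$, where $f(r)=\max\{|h|_Y : h\in H,\ l(h)\le r\}$ is exactly the distortion function analyzed in Lemmas \ref{vd} and the surrounding discussion.

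The key steps, in order, are: (1) specialize $g$ in the construction for part (3) to a recursive function that is increasing, bounded by a linear function on an infinite set (e.g.\ the numbers $d_i-1$ as in Lemma \ref{vd}), yet superpolynomial — this is possible since we only need recursiveness here, not non-recursiveness; (2) verify that the resulting length function $l$ on $H$ from \eqref{pct} is computable and still satisfies the $(D)$ condition (this is immediate from Lemma \ref{da}, whose proof does not use non-recursiveness of $g$); (3) invoke an effective/quantitative Higman embedding theorem to embed $H$ into a finitely presented $K$ with $|h|_K \approx l(h)$ for $h\in H$ — here one must check that the ambient machinery preserves the length function up to $\approx$, which is the technical heart of the matter; (4) conclude as in Lemma \ref{vd}: the embedding is distorted because $f(d_i)\ge g(d_i)$ grows superlinearly along $d_i$, while the same argument bounding $|h|_Y\le 2d_i-1$ for $l(h)\le d_i-1$ shows $\Delta_H^K(r)\le 2r+1$ along the infinite sequence $r=d_i-1$; (5) observe that being bounded by a linear function on an infinite set while being genuinely distorted forces, by Remark \ref{bjo}, that $\Delta_H^K$ is not equivalent to any superadditive function.

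The main obstacle is step (3): one needs a Higman-type embedding that is simultaneously \emph{finitely presented} on the target side and \emph{quasi-isometrically faithful} to the prescribed length function $l$ on $H$ — that is, one must cite or adapt a theorem guaranteeing $|h|_K \approx l(h)$ rather than merely $|h|_K \preceq$ (something recursive). The standard tool is the Olshanskii–Sapir machinery on distortion of finitely presented groups, or the S-machine/HNN-based constructions that realize a given computable "metric" on a recursively presented group inside a finitely presented one; since in steps (1)–(2) we have arranged $l$ to be recursive and at most exponential in $|h|_Y$, the hypotheses of such an embedding theorem are met. A secondary, milder point is checking that the linear-on-an-infinite-set behavior survives the embedding: since that behavior of $f$ was derived purely from the combinatorics of the weight function $l$ (Lemma \ref{vd}'s proof), and the embedding only changes $|h|_K$ by a multiplicative constant relative to $l(h)$, the bound $\Delta_H^K(d_i-1)\le C(2d_i-1)$ for a fixed constant $C$ follows, completing the argument. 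Finally, distortedness is not automatic and must be affirmed: $f$ is unbounded above relative to $r$ because $g$ was chosen superlinear, so $\Delta_H^K$ is not $\approx r$, hence $H$ is genuinely distorted in $K$ while failing to have superadditive distortion.
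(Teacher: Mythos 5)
Your overall route is the same as the paper's: choose $g$ recursive and superlinear (the paper simply takes $g(x)=x^2$), so that the weight function $l$ of Equation \eqref{pct} becomes computable; invoke the finitely presented distortion/embedding theorem of \cite{olshr} to obtain a finitely presented $K \supseteq H$ with $l(h)=\Theta(|h|_X)$; and then rerun the two halves of the Lemma \ref{vd} argument (the inequality $f(d_i)\ge g(d_i)$ gives superlinear, hence genuine, distortion, while $l(h)\le d_i-1$ forcing $|h|_Y\le 2d_i-1$ gives the linear bound along the infinite set $\{d_i-1\}$). Your identification of the "effective Higman-type" embedding as the technical crux, and your step (4), match the paper.

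The genuine gap is in your step (2). Computability of $l$ is \emph{not} immediate from Lemma \ref{da}: that lemma only verifies the $(D)$ condition and says nothing algorithmic. To compute $l$ one must decide the word problem in $H$ and, beyond that, effectively compute $Y$-geodesic lengths of central elements so as to determine the elements $h_i$ and the weights $d_i$ in the first place. Hall's theorem, as quoted before Lemma \ref{da}, merely asserts the existence of a two-generated center-by-metabelian group with prescribed center; it does not supply a presentation with decidable word problem. This is precisely why the paper's proof spends most of its length constructing $H$ explicitly, as the extension by the shift automorphism $\alpha$ of the concretely presented group $B$ (generated by involutions $b_i$ with $[[b_i,b_j],b_k]=1$ and the identifications $c_{ij}=c_{kl}$ for $j-i=l-k$), so that the word problem and the computation of $l$ are visibly effective. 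A secondary, cosmetic point: you do not need $g$ to be superpolynomial or to dominate "any prescribed recursive function"; any superlinear recursive $g$ already makes the distortion non-linear, and asking for more only muddies the requirement that $g$ itself remain recursive.
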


\begin{proof}
It is easy to see that the function $l$ of Equation \eqref{pct} is computable on $H$ if $g$ is recursive when $H$ has algorithmically decidable word problem. To see that $H$ can be constructed to have solvable word problem, we will briefly give the explicit construction of the group $H$. First we define a group $B$ by generators and relations :
$B=\langle b_i, i\in \z \mid b_i^2=1, [[b_i,b_j],b_k]=1, i,j,k \in  \z \rangle$.
The commutators  $c_{ij}=[b_i,b_j]$ ($i<j$) generate a central subgroup
in $B$ and they are linearly independent over $\z/2\z$.
Therefore adding the relation $c_{ij}=c_{kl}$ for
all $i,j,k,l$ with $j-i=l-k$, we obtain  a group (we keep the same notation
$B$) whose central subgroup $C$ is independently generated by the commutators
$c_m$, $m=1,2,\dots$, where $c_m= c_{i,i+m}=[b_i,b_{i+m}]$ for every $i\in \z$.
By Dyck's theorem, the mapping $b_i\mapsto b_{i+1}$ ($i\in \z)$ extends to an
automorphism $\alpha$ of $B$ leaving fixed all the elements of $C$. Hence the extension
$H$ of $B$ by this automorphism is generated by two elements $b_0$ and $\alpha$
and has the infinite central subgroup $C$ generated by all $c_m$-s, the elements of order $2$. Assume now that $g(x)=x^2$, and so $l$ is computable on $H$. By \cite{olshr}, one can choose a finitely presented group $K$ containing $H$ such that $l(h) = \Theta(|h|_X)$, where $X$ is a finite set of generators for $K$. Then as in Lemma \ref{vd}, we have that the distortion function of $H$ in $K$ is not bounded by a linear function but bounded by a linear funtion on an infinite subset of $\mathbb{N}$.

\end{proof}

Theorem \ref{s12}, parts $(3)$ and $(4)$ follows from Lemmas \ref{vd}, \ref{emc} and Corollary \ref{spra}.\\


To prove Part $(2)$ of Theorem \ref{ellp}, let $H=\z$, and consider the function $l$ defined in \eqref{ps}. We have that the relative growth function of the induced embedding is by definition $\#\{i \in \z: |i|_G \leq r\}$ which is equivalent to $\# \{i \in \z : l(i) \leq r\}$. 

\begin{lemma}\label{dd}
For any $r \geq 0$, the number of positive integers $i$ with $l(i) \leq r$ is bounded from above by a function equivalent to $2^{\sqrt{r}}.$
\end{lemma}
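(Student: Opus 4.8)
The plan is to bound the number of positive integers $i$ with $l(i)\le r$ by counting the "weight-$r$ factorizations" $P$ that realize such values, and to observe that each element $i\in\z$ with $l(i)\le r$ is obtained from a product of the special generators $h_j^{\pm1}$ (of weight $l_j$) together with at most $r$ letters from $Y^{\pm1}$ — here $Y=\{1\}$ since $H=\z$, so the $Y$-part contributes only a bounded additive constant and can be ignored up to equivalence. So it suffices to bound the number of integers expressible as $\sum_j k_j (\pm h_j)$ with $\sum_j k_j l_j \le r$, where $k_j\ge 0$. First I would recall from the construction preceding Lemma~\ref{cait} that $l_j < l_{j+1}\le 2l_j$ and $l_1=2$, so $l_j$ grows geometrically: $l_j \ge 2^{j-1}$. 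Consequently only indices $j$ with $l_j\le r$, i.e. $j\le \log_2 r + 1$, can appear, so the number $m=m(r)$ of relevant special generators is $O(\log r)$.

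Next I would estimate the number of admissible coefficient vectors $(k_1,\dots,k_m)$ with $k_j\ge 0$ and $\sum_j k_j l_j\le r$. Since $l_j\ge 2^{j-1}$, we have $k_j \le r/l_j \le r/2^{j-1}$, so the number of such vectors is at most $\prod_{j=1}^{m}(1+r/2^{j-1})$. Taking logarithms, $\sum_{j\ge1}\log(1+r 2^{-(j-1)})$: the terms with $2^{j-1}\le r$ contribute each at most $\log(2r)$ and there are $O(\log r)$ of them, giving $O((\log r)^2)$; the terms with $2^{j-1}>r$ contribute $\le \sum r2^{-(j-1)} = O(1)$. Hence $\prod_{j=1}^{m}(1+r/2^{j-1}) \le 2^{O((\log r)^2)}$. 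Multiplying by the $2^m = r^{O(1)}$ sign choices and by the bounded contribution of the $Y$-letters does not change this. But $2^{(\log r)^2}$ is \emph{larger} than $2^{\sqrt r}$, so this crude product bound is not good enough — this is the main obstacle, and the place where one must use the geometric growth of the $l_j$ more carefully, by separating the ranges of $j$.

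To get the sharp bound I would split the sum $\sum_j k_j l_j\le r$ at the threshold where $l_j\approx \sqrt r$. For the "large" generators, those with $l_j \ge \sqrt r$, there are only $O(\log r)$ of them, each coefficient is at most $\sqrt r$, and moreover their total coefficient sum is at most $\sqrt r$ (since each costs at least $\sqrt r$ in weight and the budget is $r$); the number of ways to choose nonnegative integers $k_j$ (for these $O(\log r)$ indices) with $\sum k_j \le \sqrt r$ is at most $\binom{\sqrt r + O(\log r)}{O(\log r)} \le (\sqrt r)^{O(\log r)} = 2^{O((\log r)^2)}$ — still too big unless handled even more tightly, so in fact I would instead bound the number of \emph{distinct integer values} $\sum_j k_j(\pm h_j)$ rather than the number of coefficient vectors, exploiting that the $h_j$ were chosen with $|h_j|_H=n_j=f(l_j)$ and $n_{j+1}\ge 2n_j$, so the values $\pm h_j$ are themselves geometrically spaced. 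For the "small" generators, those with $l_j<\sqrt r$: there are $O(\log r)$ such $j$, the coefficients are bounded by $r/l_j$, but crucially the resulting integer $\sum k_j(\pm h_j)$ has absolute value at most $\sum k_j n_j \le f(r)$, and (as in the proof of Lemma~\ref{wk}) $f(r)$ is controlled; more to the point, any integer $i$ with $l(i)\le r$ satisfies $|i|\le f(r)$, and conversely the map from factorizations to integers is what we must make injective enough. The clean way to finish: show directly that $\{i>0 : l(i)\le r\}$ injects into pairs (a subset-with-multiplicity of the $O(\log r)$ relevant $h_j$'s realizing a weight-$\le r$ sum, a sign vector), count these as above to get a bound of the form $2^{c(\log r)^2}$, and then — since Lemma~\ref{wk} already gives $d(r)\approx f(r)$ and we separately know $g_H^G\preceq 2^r$ — interpolate, OR, better, re-run the estimate with the threshold chosen so that the number of generators below it times the log of the max coefficient is $O(\sqrt r)$; because $l_j\ge 2^{j-1}$, taking $j$ up to $\tfrac12\log_2 r$ gives $O(\log r)$ generators each with coefficient $\le r/l_j \le \sqrt r$, contributing $O(\log r \cdot \log\sqrt r)=O((\log r)^2)$ again — so the honest resolution is that the $(\log r)^2$ in the exponent is absorbed: $2^{(\log r)^2}\preceq 2^{\sqrt r}$ is \emph{false}, so one genuinely needs the finer observation that the admissible $(k_1,\dots,k_m)$ with $\sum k_j l_j\le r$ and $l_j\ge 2^{j-1}$ number at most $2^{O(\sqrt r)}$, which follows because $\sum_j k_j l_j \le r$ with $l_j$ doubling forces $\sum_j k_j = O(\sqrt r)$ on the dominant range and the number of nonnegative solutions of $\sum k_j=O(\sqrt r)$ in $O(\log r)$ variables is subexponential in $\sqrt r$. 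I would therefore organize the final count around the single inequality: number of integers with $l(i)\le r$ is at most $\sum_{t\le r}(\#\{\text{compositions of }t\text{ into parts }l_j\})\cdot 2^{(\#\text{parts})}$, and bound compositions into geometrically-growing parts by $2^{O(\sqrt r)}$, completing the proof that $\#\{i>0 : l(i)\le r\}\preceq 2^{\sqrt r}$.
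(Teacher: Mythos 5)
Your proposal rests on a false premise: you claim that $l_1=2$ together with $l_j<l_{j+1}\le 2l_j$ forces $l_j\ge 2^{j-1}$. That chain of inequalities bounds the growth of the weights from \emph{above} ($l_j\le 2^j$), not from below; the only lower bound available is $l_j\ge j+1$, coming from strict monotonicity. Indeed, for $f(r)=2^r$ the construction gives $l_{j+1}=l_j+1$, so $l_j=j+1$ grows linearly. Consequently the number of indices $j$ with $l_j\le r$ can be of order $r$ rather than $O(\log r)$, the coefficient bounds $k_j\le r/2^{j-1}$ fail, and every subsequent estimate in your argument (the $2^{O((\log r)^2)}$ product bound, the split at the threshold $l_j\approx\sqrt{r}$, the final count of ``compositions into geometrically-growing parts'') collapses. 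A second, independent error: you assert that $2^{(\log r)^2}$ exceeds $2^{\sqrt{r}}$ and that $2^{(\log r)^2}\preceq 2^{\sqrt{r}}$ is false; in fact $(\log r)^2=o(\sqrt{r})$, so the comparison goes the other way (ironically, had your geometric lower bound been true, your crude product bound would already have sufficed). Your closing formula also counts \emph{compositions}, i.e.\ ordered factorizations; compositions of $t$ into parts that include the weight-$1$ letters number exponentially in $t$, so passing to unordered data is essential. Finally, the $Y$-part is not ``a bounded additive constant'': up to $r$ unit-weight letters may occur, contributing an additive integer of size up to $r$ (this costs only a polynomial factor, but it must be stated correctly).

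The paper's proof is short and uses an ingredient you never reach: a minimal expression realizing $l(i)=R\le r$ determines an unordered partition of $R$ into parts from $\{1\}\cup\{l_1,l_2,\dots\}$, and since these parts are positive integers, the number of such partitions over all $R\le r$ is at most $r\,p(r)\preceq 2^{\sqrt{r}}$ by the Hardy--Ramanujan asymptotics for the partition function. Given the partition, the integer $i$ is determined by sign data, which is controlled by the observation that an expression of total weight at most $r$ involves fewer than $\sqrt{2r}$ pairwise distinct part-sizes (distinct positive integers summing to at most $r$). Multiplying gives $2^{\sqrt{r}}\cdot 2^{\sqrt{2r}}\approx 2^{\sqrt{r}}$. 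To salvage your approach you would need to replace the geometric-growth assumption by this partition-function count; as written, the proposal does not prove the lemma.
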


\begin{proof}

Let $l(i)\leq r$ where the value of $l(i)=l_{i_1}+\cdots+l_{i_k} + n$ arises from the partition $P$ of $i=\pm n_{i_1} \pm \cdots \pm n_{i_k} \pm n$. Consider the number of summands in the expression $l(i)=l_{i_1}+\cdots+l_{i_k} + n$. 

It does not exceed the number $r p(r)$, where $p(r)$ is the number of partitions of $r$ in positive summands. By \cite{partition}, $p(r) \preceq 2^{\sqrt{r}}$, and so the number of expressions $i=\pm n_{i_1} \pm \cdots \pm n_{i_k} \pm n$ with $l(i)\le r$ is at most $ r 2^{\sqrt{r}}$, up to equivalence. Finally, $ r 2^{\sqrt{r}}
\approx 2^{\sqrt{r}}.$

  Now we estimate the number $m$  of pairwise distinct summands  in the
  expression  $i=\pm n_{i_1} \pm \cdots \pm n_{i_k} \pm n$.  Since one has $1+2+...+m > r$ for $m \ge \sqrt{2r}$,
  we must have $m<\sqrt{2r}$. Hence the number of possible sign arrangements
  in $\pm n_{i_1} \pm \cdots \pm n_{i_k} \pm n$ is less than  $2^{\sqrt{2r}}.$
  This implies that the number of positive integers  $i$ with  $l(i)\le r$
  is bounded (up to  equivalence) by $ 2^{\sqrt{r}}  2^{\sqrt{2r}}
 \approx  2^{\sqrt{r}}.$

\end{proof}

\begin{proof} of Theorem  \ref{ellp} Part $(1)$:\\

Assume that $H$ is an infinite, finitely generated, exponentially distorted subgroup of a finitely generated group $G$. Then for every $k\geq 1$ we can find, using the exponential distortion, elements $g_k \in H$, such that $3 |g_{k-1}|_H < |g_k|_H$ but $|g_k|_G \leq Dk$ for some $D > 0$. Then the ball of radius $Dk^2$ in $G$ contains all the products $g_k^{\nu_k} \cdots g_1^{\nu_1}$ where each $\nu_j \in \{0,1\}$ because any such product has length at most $|g_k^{\nu_k} \cdots g_1^{\nu_1}|_G \leq |g_k|_G+\dots+|g_1|_G \leq D(\frac{k(k+1)}{2})$. Moreover, all these products are pairwise distinct. 

To see this, suppose that we did have two equal expressions $$g_k^{\nu_k} \cdots g_1^{\nu_1} = g_k^{\nu'_k} \cdots g_1^{\nu'_1}.$$ Without loss of generality, $\nu_{k}, \nu'_{k} \ne 0$ and $\nu_k \ne \nu'_k$. 

Then $g_k$ is expressible as a product of $g^{\pm 1}_1,..., g_{k-1}^{\pm 1 }$,
  where each of $g_j$ occurs at most twice. Therefore
  $|g_k|_H\le 2|g_{k-1}|_H+\cdots+2|g_1|_H< |g_k|_H(\frac{2}{3}+\frac{2}{9}+...+\frac{2}{3^{k-1}})<|g_k|_H$,
  a contradiction.

Therefore, the products $g_k^{\nu_k} \cdots g_1^{\nu_1}$ represent $2^k$ distinct elements. Therefore for the relative growth function of $H$ in $G$, $g_H^G$, we have inequalities $g_H^G(Dk^2) \geq 2^k$, whence $g_H^G(r) \succeq 2^{\sqrt{r}}$.

\end{proof}

\section{Connections Between Relative Growth and Distortion}\label{swz}

\subsection{Constructing Length Functions with Prescribed Data}
We begin by introducing some lemmas that will be used in the proof of Theorem \ref{cm}.

\begin{lemma}\label{seq}
There exist increasing sequences $\{a_i\}_{i \in \mathbb{N}}, \{n_i\}_{i \in \mathbb{N}}$ of natural numbers satisfying the following properties for all $i \geq 2$.
\begin{itemize}
\item $a_1=n_1=1$
\item $a_i \ge  2^{i+2}n_{i-1}$ 
\item $ n_i > n_{i-1}a_i/a_{i-1}$
\item $n_{i-1} \mid n_i$
\item There does not exist a recursive function $f$ such that $n_i \leq f(a_i)$ for every $i$.
\item $a_i > a_{i-1}$
\end{itemize}
\end{lemma}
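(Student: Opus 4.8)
The plan is to construct the sequences $\{a_i\}$ and $\{n_i\}$ recursively, interleaving two concerns: the purely arithmetic inequalities ($a_i \ge 2^{i+2}n_{i-1}$, $n_i > n_{i-1}a_i/a_{i-1}$, $n_{i-1}\mid n_i$, and strict monotonicity) are easy to satisfy greedily once $a_{i-1}, n_{i-1}$ are fixed, while the non-recursiveness clause ``there is no recursive $f$ with $n_i \le f(a_i)$ for all $i$'' is the real content and requires a diagonalization against an enumeration of all partial recursive functions.

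First I would fix a standard enumeration $\varphi_1, \varphi_2, \dots$ of all partial recursive functions $\mathbb{N}\to\mathbb{N}$. I would build the sequences in blocks or with a book-keeping of ``requirements'' $R_e$: requirement $R_e$ is ``ensure $n_i > \varphi_e(a_i)$ for some index $i$,'' which if met for every $e$ guarantees no total recursive $f$ can dominate, since any such $f$ equals some $\varphi_e$. At stage $i$, having chosen $a_1<\dots<a_{i-1}$ and $n_1<\dots<n_{i-1}$ with $n_{i-1}\mid$ everything and all inequalities satisfied, I first pick $a_i$ to be any integer larger than $a_{i-1}$ and at least $2^{i+2}n_{i-1}$ (this is the only constraint on $a_i$, so it is unconstrained from below by $2^{i+2}n_{i-1}$). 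Then I need $n_i$ a multiple of $n_{i-1}$ exceeding $n_{i-1}a_i/a_{i-1}$, which is again satisfiable by any sufficiently large multiple of $n_{i-1}$ --- and crucially, since there is \emph{no upper bound} imposed on $n_i$, I am free to also make $n_i$ large enough to beat a value of some $\varphi_e$.

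The diagonalization step: process the requirements $R_1, R_2, \dots$ in order. To meet $R_e$ at stage $i$, run $\varphi_e(a_i)$; if (and when) it halts with value $v$, choose $n_i$ to be a multiple of $n_{i-1}$ with $n_i > \max\{n_{i-1}a_i/a_{i-1},\, v\}$ and declare $R_e$ satisfied, moving to $R_{e+1}$ at the next stage. The subtlety is that $\varphi_e(a_i)$ may diverge, which would block the construction if we waited. The standard fix is a dovetailing/finite-injury-free scheme: we do \emph{not} wait. We process requirements in a way that at stage $i$ we only spend $i$ steps simulating the currently active $\varphi_e$ on the relevant argument; if it has not halted, we simply pick $n_i$ minimal satisfying the arithmetic constraints and carry the requirement forward. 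If $\varphi_e$ is total it eventually halts on the argument we feed it and gets diagonalized; if $\varphi_e$ is not total on that argument it is not a total recursive function and need not be diagonalized --- but to be safe one feeds $\varphi_e$ a fresh argument $a_i$ at each stage it remains active, and totality of $\varphi_e$ then forces a halt. Either way every total recursive $f=\varphi_e$ gets injured at some finite stage, so the fifth bullet holds.

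The main obstacle is exactly this interplay between divergence of the $\varphi_e$ and the need to keep the sequences \emph{total} and increasing at every stage: we must never stall. I would handle it by the dovetailing argument above --- at each stage commit to a concrete $a_i, n_i$ satisfying the arithmetic bullets, running only a bounded simulation of the active Turing machine, and only ``upgrade'' $n_i$ (make it bigger) when a halt is observed. Since making $n_i$ bigger never violates any of the arithmetic constraints (they are all lower bounds or divisibility, both monotone upward through multiples of $n_{i-1}$), this is consistent. A clean way to present it is: define $a_i, n_i$ by the explicit recursion $a_i = \max\{a_{i-1}+1,\ 2^{i+2}n_{i-1}\}$ and $n_i = n_{i-1}\cdot\big(1 + \max\{\lceil a_i/a_{i-1}\rceil,\ \psi_i(a_i)\}\big)$, where $\psi_i(a_i)$ is the output of running the $i$-th machine (under a schedule that cycles through all machines infinitely often) on input $a_i$ for $i$ steps, or $0$ if it has not halted; then verify all six bullets, the last by the dovetailing argument guaranteeing each total $\varphi_e$ is eventually beaten.
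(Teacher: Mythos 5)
Your high-level plan (greedy choices for the arithmetic constraints, diagonalization for the non-recursiveness clause) is the right one, but the execution of the diagonalization has a fatal flaw, and the flaw is baked into your ``clean'' final recursion. That recursion is fully effective: $a_i$ and $n_i$ are computed from $a_{i-1}$, $n_{i-1}$ and an $i$-step bounded simulation, so $i\mapsto a_i$ and $i\mapsto n_i$ are recursive functions. But then $f(a)=\max\{n_j : j\le a\}$ is a total recursive function with $f(a_i)\ge n_i$ for every $i$ (since $a_i\ge i$), so the fifth bullet provably fails for \emph{any} computable construction of the sequences. Concretely, the mechanism of failure is the one you flagged but did not resolve: at stage $i$ you simulate the active machine on input $a_i$ for only $i$ steps, while $a_i\ge 2^{i+2}n_{i-1}$ grows at least exponentially in $i$; a total machine whose running time on input $n$ is at least $n$ therefore never halts within the allotted budget, $\psi_i(a_i)$ returns $0$ forever, and the corresponding requirement is never met. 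Your hedge that feeding $\varphi_e$ a fresh argument at each stage ``forces a halt'' by totality is false for exactly this reason: totality forces a halt in \emph{some} finite time, not within $i$ steps on an input of size $2^{i+2}n_{i-1}$.

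The paper avoids all of this because the lemma is a pure existence statement, so no effectivity is needed: it takes an arbitrary enumeration $\{f_j\}$ of the countable set of \emph{total} recursive functions and sets $a_i=2^{i+3}n_{i-1}+1$ and $n_i=n_{i-1}\max\{\lceil n_{i-1}a_i/a_{i-1}\rceil+1,\ \max_{j\le i}f_j(a_i)+1\}$. Every value $f_j(a_i)$ is defined because the $f_j$ are total, so there is no divergence to manage, and each $f_j$ is beaten at every stage $i\ge j$. Your argument becomes correct if you drop partial recursive functions and bounded simulation entirely and diagonalize against this non-effective enumeration; as written, it does not establish the fifth bullet.
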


\begin{proof}
We use that the set of recursive functions is countable. Denote it by $\{f_i\}_{i \in \mathbb{N}}$. Suppose that $a_{i-1}$ and $n_{i-1}$ have been defined. Let $a_i=2^{i+3}n_{i-1}+1$. Let $$n_i=n_{i-1} \max \{\lceil\frac{n_{i-1}a_i}{a_{i-1}}\rceil +1, \max_{j\leq i}(f_j(a_i))+1\}.$$
\end{proof}

Define a function $l: \mathbb{Z} \rightarrow \{0,1,2,\dots,\} $ by the formula $l(0)=0$ and for nonzero $n \in \z$
\begin{equation}\label{ellfn}
l(n) = \min\{a_{i_1}+\cdots+a_{i_s} | n=\pm n_{i_1} \pm \cdots \pm n_{i_s} \textrm{ for some } i_1,\dots,i_s \in \mathbb{N} \}
\end{equation}

We will show that the function $l$ of equation \eqref{ellfn} satisfies the $(C)$ condition. The following auxilliary Lemmas will help us to prove that condition $(C3)$ is satisfied. Let $r$ be a natural number. We want to be able to compute $\#\{n \in \mathbb{Z} :l(n) \leq r\}.$ Suppose that $n$ is in this set, let $$l(n) = a_{i_1}+\cdots+a_{i_s}$$ and consider the corresponding minimal presentation given by $$n=\pm n_{i_1} \pm \cdots \pm n_{i_s}.$$ 

\begin{lemma}\label{rrr}
This expression has no summands with subscript greater than or equal to $j$, where $j$ is defined by the property: $a_{j-1}\le r <  a_j$. 
\end{lemma}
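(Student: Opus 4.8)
The plan is to prove Lemma \ref{rrr} by a direct contradiction argument, exploiting the super-geometric growth of the sequence $\{a_i\}$ guaranteed by Lemma \ref{seq}. Suppose the minimal presentation $n = \pm n_{i_1} \pm \cdots \pm n_{i_s}$ has some summand with subscript $i_t \ge j$, where $j$ is defined by $a_{j-1} \le r < a_j$. Then, since all the $a_i$ are positive, $l(n) = a_{i_1} + \cdots + a_{i_s} \ge a_{i_t} \ge a_j > r$, contradicting the assumption $l(n) \le r$. So the real content is just that a single large-index term already overshoots $r$; the defining property of $j$ does the rest.

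First I would record precisely why $j$ is well-defined: since $\{a_i\}$ is increasing with $a_1 = 1$, for any natural number $r \ge 1$ there is a unique index $j$ with $a_{j-1} \le r < a_j$ (with the convention $a_0 = 0$ or simply using $a_1 = 1 \le r$ as the base case), so the statement is not vacuous. Then I would spell out the one-line estimate above: the minimality of the presentation is not even needed for this direction — \emph{any} presentation realizing $l(n)$, or indeed any presentation at all achieving a sum $\le r$, cannot use an index $\ge j$. I would phrase the conclusion as: every summand $n_{i_t}$ appearing in the minimal presentation has $i_t < j$, equivalently $i_t \le j-1$.

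The step I expect to require the most care — though it is still routine — is simply being careful about the boundary conventions (whether $j$ can equal $1$, what $a_0$ means) and making sure the inequality chain $a_{i_t} \ge a_j$ uses monotonicity correctly (it needs $i_t \ge j$, which is exactly the negation we assumed). There is no genuine obstacle here; this lemma is a bookkeeping device that lets the subsequent counting argument for condition $(C3)$ restrict attention to finitely many indices $i_1, \dots, i_s < j$, after which one bounds the number of sign patterns and the number of index multisets to get an exponential-in-$r$ bound on $\#\{n : l(n) \le r\}$.
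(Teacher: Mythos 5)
Your argument is exactly the paper's: the paper's entire proof reads ``This is true, since otherwise $l(n) \ge a_j > r$, a contradiction,'' which is precisely your one-line estimate that a summand with subscript at least $j$ forces $l(n) \ge a_{i_t} \ge a_j > r$ by monotonicity of $\{a_i\}$. Your additional remarks on why $j$ is well-defined are correct bookkeeping that the paper leaves implicit.
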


\begin{proof}
This is true, since otherwise $l(n) \ge a_j > r,$ a contradiction.
\end{proof} 

Therefore, we may rewrite the expression as $$n=k_1n_{1} + \cdots + k_{j-1}n_{j-1},$$ for some integers $k_1, \dots, k_{j-1}$. We may assume that $j \geq 3$ in the following, since eventually we will let $r$ become very large, and with it, so will $j$.

\begin{rem}\label{lsl}
Observe that $|k_{i}| \leq \frac{r}{a_{i}}$, for $i \leq j-1$ for otherwise, $l(n) \geq |k_{i}|a_{i} >r$.

\end{rem}

\begin{lemma}\label{atpy}
For any $2 \leq i < j-1$ we have that $|k_i| < \frac{n_{i+1}}{n_i}$. 
\end{lemma}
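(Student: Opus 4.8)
The plan is to exploit the minimality of the presentation $n = k_1 n_1 + \cdots + k_{j-1}n_{j-1}$ together with the rapid growth of the sequences $\{a_i\}$ and $\{n_i\}$ guaranteed by Lemma \ref{seq}. Suppose, for contradiction, that some coefficient $k_i$ with $2 \le i < j-1$ satisfies $|k_i| \ge n_{i+1}/n_i$. Since $n_i \mid n_{i+1}$, the quantity $n_{i+1}/n_i$ is a positive integer, so we may write $k_i = q\,(n_{i+1}/n_i) + k_i'$ with $q \ge 1$ and $|k_i'| < n_{i+1}/n_i$ (choosing the balanced remainder). Then $k_i n_i = q\, n_{i+1} + k_i' n_i$, and substituting this into the expression for $n$ replaces the term $k_i n_i$ by a contribution $q\, n_{i+1}$ at level $i+1$ and a smaller contribution $k_i' n_i$ at level $i$. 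The point is to compare the cost $a_i|k_i|$ of the original term against the cost of the rewritten term.

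The key estimate is a cost comparison. The original term contributes at least $a_i \cdot (n_{i+1}/n_i)$ to $l(n)$ (taking just the part $q=1$; more precisely $a_i|k_i| \ge q\, a_i (n_{i+1}/n_i)$), whereas after rewriting, the extra mass at level $i+1$ costs $q\, a_{i+1}$ plus the carried-over material is absorbed into the already-present coefficient $k_{i+1}$ — and here one has to be slightly careful, because $k_{i+1}$ may grow, but by Remark \ref{lsl} all the $|k_m|$ are controlled by $r/a_m$, and one can instead argue directly that the single monomial $q n_{i+1}$, re-expressed via $\pm n_{i+1}$ summands, costs only $q\, a_{i+1}$. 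So it suffices to check that $a_i (n_{i+1}/n_i) > a_{i+1} + a_i \cdot (n_{i+1}/n_i - 1)\cdot 0$, i.e. essentially that $a_i (n_{i+1}/n_i) > a_{i+1}$, which would contradict the minimality of $l(n)$ once we also account for the leftover $k_i' n_i$. Unwinding: minimality forces $a_i |k_i| \le a_{i+1} q + a_i |k_i'| < a_{i+1} q + a_i (n_{i+1}/n_i)$, while $a_i|k_i| \ge a_i q (n_{i+1}/n_i)$; combining gives $a_i q (n_{i+1}/n_i) < a_{i+1} q + a_i(n_{i+1}/n_i)$, hence $(q-1) a_i (n_{i+1}/n_i) < a_{i+1} q$. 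For $q \ge 1$ this is only possible if $a_i(n_{i+1}/n_i)$ is not too large compared to $a_{i+1}$; but the defining inequality $n_{i+1} > n_i a_{i+1}/a_i$ from Lemma \ref{seq} says exactly $a_i (n_{i+1}/n_i) > a_{i+1}$, and feeding this back (with the $q \ge 1$ bookkeeping, using $q/(q-1) \le 2$ for $q \ge 2$ and handling $q=1$ separately, where no rewriting mass remains and one gets $a_i|k_i| = a_i(n_{i+1}/n_i) + a_i|k_i'|$ contradicting minimality against the cheaper $a_{i+1} + a_i|k_i'|$) yields the contradiction.

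The step I expect to be the main obstacle is the careful bookkeeping of the "carry": when $q \ge 2$, rewriting $k_i n_i$ introduces $q\,n_{i+1}$, and one must be sure that re-expressing this via $\pm n_{i+1}$ summands genuinely costs only $q\, a_{i+1}$ in the definition \eqref{ellfn} (it does, since $q n_{i+1} = n_{i+1} + \cdots + n_{i+1}$ is an admissible presentation), and that one is comparing against the \emph{true} minimal value $l(n)$ rather than against the particular presentation at hand — so the argument must be phrased as: the rewritten presentation is a valid presentation of $n$ of strictly smaller $a$-weight, contradicting minimality of $l(n)$. The inequalities $a_i \ge 2^{i+2} n_{i-1}$ and $a_i > a_{i-1}$ provide the slack needed to absorb the $q$-dependent factor. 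Once this is set up correctly the desired bound $|k_i| < n_{i+1}/n_i$ follows, and this in turn will be used (presumably in the next lemma) to bound the total number of presentations, hence to verify $(C3)$.
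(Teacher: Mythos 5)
Your core idea --- carry excess copies of $n_i$ into the level $i+1$, compare the costs of the two presentations using the minimality defining $l(n)$, and contradict the inequality $n_{i+1}>n_i a_{i+1}/a_i$ from Lemma \ref{seq} --- is exactly the argument the paper uses. However, your execution has a genuine gap, created by the choice of \emph{balanced} remainder. First, with $k_i=q(n_{i+1}/n_i)+k_i'$ and $k_i'$ allowed to be negative, the inequality $a_i|k_i|\ge a_i q (n_{i+1}/n_i)$ that you rely on is false. Second, even granting it, your combined inequality $(q-1)a_i(n_{i+1}/n_i)<q a_{i+1}$ only yields $a_i(n_{i+1}/n_i)<2a_{i+1}$ when $q\ge 2$, and this does \emph{not} contradict what Lemma \ref{seq} provides: the third bullet gives only $a_i(n_{i+1}/n_i)>a_{i+1}$ (equivalently $n_{i+1}/n_i\ge\lfloor a_{i+1}/a_i\rfloor+1$), with no factor of $2$ to spare. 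The ``slack'' you invoke from $a_i\ge 2^{i+2}n_{i-1}$ and $a_i>a_{i-1}$ does not close this, so the $q\ge 2$ branch does not terminate in a contradiction as written.

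The repair is small and lands you precisely on the paper's proof: do not extract the maximal quotient at all. Since $n_i\mid n_{i+1}$ and $|k_i|\ge n_{i+1}/n_i$ by assumption, write $|k_i|=n_{i+1}/n_i+s$ with $s\ge 0$, and replace $\pm|k_i|n_i$ by $\pm n_{i+1}\pm s n_i$, absorbing the single new $\pm n_{i+1}$ into the coefficient $k_{i+1}$ (this is where the hypothesis $i<j-1$ is needed, so that level $i+1$ is still available below the cutoff $j$ of Lemma \ref{rrr}). Minimality of $l(n)$ together with $|k_{i+1}\pm 1|\le |k_{i+1}|+1$ gives $|k_i|a_i\le s a_i+a_{i+1}$, i.e. $(n_{i+1}/n_i)a_i\le a_{i+1}$, directly contradicting $n_{i+1}>n_i a_{i+1}/a_i$. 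Alternatively, keep your general $q$ but use the ordinary non-negative remainder $0\le k_i'<n_{i+1}/n_i$: then $a_i|k_i|=q a_i(n_{i+1}/n_i)+a_i|k_i'|$ exactly, and minimality gives $q a_i(n_{i+1}/n_i)\le q a_{i+1}$, again a clean contradiction for every $q\ge 1$.
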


\begin{proof}
Suppose by way of contradiction that there is $s \geq 0$ such that $|k_{i}| = \frac{n_{i+1}}{n_{i}}+s$. Then $$n=k_1n_1+\cdots \pm |k_{i}|n_{i}+\cdots+k_{j-1}n_{j-1}=$$ $$k_1n_1+\cdots \pm \bigg(\frac{n_{i+1}}{n_{i}}\bigg)n_{i} \pm sn_{i}+\cdots+k_{j-1}n_{j-1}=$$ $$k_1n_1+\cdots \pm sn_{i} + (k_{i+1} \pm 1)n_{i+1}\cdots + k_{j-1}n_{j-1}.$$ Thus we have two expressions for $n$ as in Equation \eqref{ellfn}. By hypothesis, $l(n) = |k_1|a_1+\cdots+|k_{j-1}|a_{j-1}$ and so since this is the minimum over all such expressions for $n$, we have that $|k_i|a_i+|k_{i+1}|a_{i+1} \leq sa_i+|k_{i+1} \pm 1| a_{i+1}$. By the triangle inequality, $sa_{i}+|k_{i+1} \pm 1|a_{i+1} \leq sa_{i}+ |k_{i+1}|a_{i+1}+a_{i+1}$. Thus we have that $|k_i|a_i \leq sa_i+a_{i+1}$. Using again that $|k_i|=\frac{n_{i+1}}{n_i}+s$ we have that $\frac{n_{i+1}a_i}{n_i} \leq a_{i+1}$, contrary to the third bullet point of Lemma \ref{seq}.

 \end{proof}

\begin{lemma}\label{fpt}
The function $l$ defined in Equation \eqref{ellfn} satisfies the $(C)$ condition of Theorem \ref{olshc}. Moreover, $\#\{n : l(n) \leq r\}$ is $o(r^2)$.
\end{lemma}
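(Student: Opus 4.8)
The plan is to verify conditions $(C1)$, $(C2)$, and $(C3)$ for the function $l$ of Equation \eqref{ellfn}, and then to sharpen the counting argument used for $(C3)$ so as to obtain the bound $\#\{n : l(n) \leq r\} = o(r^2)$. Conditions $(C1)$ and $(C2)$ are immediate from the definition: $l$ is symmetric since $n = \pm n_{i_1} \pm \cdots \pm n_{i_s}$ iff $-n = \mp n_{i_1} \mp \cdots \mp n_{i_s}$, and $l(n) = 0$ iff $n = 0$ by construction; subadditivity follows because concatenating a minimal presentation of $m$ with one of $n$ yields a presentation of $m+n$ whose cost is $l(m) + l(n)$, so the minimum defining $l(m+n)$ is no larger. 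The real work is $(C3)$, and for that I would combine the structural facts already established: Lemma \ref{rrr} says a minimal presentation of any $n$ with $l(n) \le r$ uses only $n_1,\dots,n_{j-1}$ where $a_{j-1} \le r < a_j$; Remark \ref{lsl} bounds $|k_i| \le r/a_i$; and Lemma \ref{atpy} bounds $|k_i| < n_{i+1}/n_i$ for $2 \le i < j-1$.

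The key step is to bound the number of admissible tuples $(k_1,\dots,k_{j-1})$. First I would observe that the map $(k_1,\dots,k_{j-1}) \mapsto k_1 n_1 + \cdots + k_{j-1}n_{j-1}$ restricted to tuples satisfying the Lemma \ref{atpy} bounds is injective: since $n_i \mid n_{i+1}$ for all $i$ and $|k_i| < n_{i+1}/n_i$, a standard mixed-radix (Cantor-style base representation) argument shows the value modulo $n_{i+1}$ recovers the lower-order data, so distinct tuples give distinct $n$. Hence $\#\{n : l(n) \le r\}$ is at most the number of such tuples, which is bounded by $\prod_{i=1}^{j-1}(2|k_i| + 1)$. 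Using $|k_i| \le r/a_i$ and the growth condition $a_i \ge 2^{i+2}n_{i-1} \ge 2^{i+2}$ (so $a_i$ grows at least geometrically), the product $\prod_{i \ge 3}(2r/a_i + 1)$ is $1 + o(1)$ relative to $r$, while the first couple of factors $i = 1, 2$ each contribute $O(r)$ (indeed $2k_1 + 1 \le 2r + 1$ and $2k_2 + 1 \le 2r/a_2 + 1 = O(r)$). Multiplying, we get $\#\{n : l(n) \le r\} \le C r \cdot r \cdot (1 + o(1))$; to push this below $r^2$ I would note that $a_2 \ge 2^4 n_1 = 16$, so the $i=2$ factor is at most $r/8 + 1$, and then observe more carefully that the tail product $\prod_{i\ge 3}(2r/a_i+1) \to 1$ as $r \to \infty$ because $\sum_{i \ge 3} r/a_i \to 0$ is false — rather $\sum 1/a_i$ converges, so $\prod(1 + 2r/a_i)$ is not $1+o(1)$; instead I would split at an index growing slowly with $r$ and use that for $i$ with $a_i > r$ the factor is $1$, giving a product over only $O(\log r)$ indices, each bounded, and the Lemma \ref{atpy} bounds kick in to make this $o(r^2)$.

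The main obstacle I anticipate is exactly this last estimate: showing the crude product bound is genuinely $o(r^2)$ rather than merely $O(r^2)$ or $O(r^2\log r)$. The resolution should be to not bound each $|k_i|$ by $r/a_i$ separately but to use the \emph{joint} constraint $\sum_i |k_i| a_i = l(n) \le r$: the number of tuples $(k_1,\dots,k_{j-1})$ of integers with $\sum |k_i| a_i \le r$ and $a_i \ge 2^{i+2}n_{i-1}$ growing geometrically is dominated by the number of ways to write $r$ as such a weighted sum, and since $a_1 = 1, a_2 \ge 16, a_3 \ge \cdots$ grow fast, the $i \ge 2$ coordinates contribute only a slowly-growing (sub-polynomial, in fact $o(r^{\epsilon})$ for every $\epsilon$) factor while $k_1$ ranges over $O(r)$ values; combined with injectivity this gives $O(r^{1+\epsilon}) = o(r^2)$. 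I would present this via the partition-counting estimate $p(r) \preceq 2^{\sqrt r}$ (as cited via \cite{partition} and used in Lemma \ref{dd}) applied only to the weights $a_2, a_3, \dots$, which already forces the count of their contributions to be $2^{O(\sqrt r)}$-ish but, because the $a_i$ grow geometrically rather than linearly, is actually $r^{o(1)}$; multiplying by the $O(r)$ choices for $k_1$ yields $o(r^2)$, completing $(C3)$ and the lemma.
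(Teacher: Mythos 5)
Your verification of $(C1)$ and $(C2)$ matches the paper and is fine, and you correctly identify the three structural facts (Lemma \ref{rrr}, Remark \ref{lsl}, Lemma \ref{atpy}) that the counting must rest on. But the actual counting step for $(C3)$ and the $o(r^2)$ bound does not work as you describe, and this is the entire content of the lemma. Your final claim --- that the coordinates $k_2,\dots,k_{j-1}$ contribute only an $r^{o(1)}$ factor because the weights $a_i$ grow geometrically and $\sum_i |k_i|a_i\le r$ --- is false: $a_2$ is a \emph{fixed} constant, so the single coordinate $k_2$ already ranges over $\Theta(r/a_2)=\Theta(r)$ values under that joint constraint, and the same is true of every fixed index. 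Multiplying by the $O(r)$ choices of $k_1$ then gives something of order $r^2$ or worse, not $o(r^2)$. (The appeal to $p(r)\preceq 2^{\sqrt r}$ does not help either, since $r\cdot 2^{\sqrt r}$ is far larger than $r^2$; and the injectivity/mixed-radix observation is unnecessary for an upper bound.) Your intermediate self-correction shows you sensed the problem, but the proposal never resolves it.

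The missing idea is \emph{which bound to use at which index}, together with the absorption built into Lemma \ref{seq}. The paper bounds $|k_i|$ by $2n_{i+1}/n_i$ (Lemma \ref{atpy}) for all the low indices $i\le j-3$ --- so that this part of the product \emph{telescopes} to $2^{j-3}n_{j-2}/n_1$ --- and reserves the bound $|k_i|\le r/a_i$ (Remark \ref{lsl}) for only the top two indices $i=j-2,\,j-1$. This yields
$$\frac{3r}{a_{j-1}}\cdot\frac{3r}{a_{j-2}}\cdot\frac{2n_{j-2}}{n_{j-3}}\cdots\frac{2n_2}{n_1}<\frac{r^2\,2^{j+1}n_{j-2}}{a_{j-1}a_{j-2}}\le\frac{r^2}{a_{j-2}},$$
where the last inequality uses the second bullet of Lemma \ref{seq}, $a_{j-1}\ge 2^{j+1}n_{j-2}$, precisely to swallow the huge telescoped factor $n_{j-2}$. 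Since $j=j(r)\to\infty$ and hence $a_{j-2}\to\infty$, this is $o(r^2)$. Without this pairing of the telescoping product against the growth condition on the $a_i$, no bound of the form you propose can get below $r^2$.
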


\begin{proof}
Observe that for each $n \in \z, n=n \cdot 1 = n \cdot n_1$, so the function is defined. To see that the condition $(C1)$ is satisfied, select $n \in \z$. Without loss of generality, $n \ne 0$. Let $l(n) = a_{i_1}+\cdots+a_{i_s}$, so that there is an expression $n=\pm n_{i_1} \pm \cdots \pm n_{i_s}.$ This implies that $-n=-(\pm n_{i_1} \pm \cdots \pm n_{i_s})$ and so by definition of $l$, we have that $l(-n) \le l(n)$. Equality holds by symmetry. The $(C2)$ condition is similarly easy: let $l(n)=a_{i_1}+\cdots+a_{i_s}$ and $l(m)=a_{j_1}+\cdots+a_{j_t}.$ Then one expression representing $n+m$ is $\pm n_{i_1} \pm \cdots \pm n_{i_s} \pm n_{j_1} \pm \cdots \pm n_{j_t}$ which implies that $l(n+m) \leq l(n)+l(m)$. Finally, we will show that the condition $(C3)$ is satisfied by showing that $\#\{n \in \z: l(n) \leq r\}$ is $o(r^2)$ (and using the fact that $r^2 \preceq 2^r$). Taking into account the signs, we have by Remark \ref{lsl} and Lemma \ref{atpy} that the number of values of $n$ with $l(n) \le r$ is at most the product over the number of values of $k_j$, namely: $$\frac{3r}{a_{j-1}}\frac{3r}{a_{j-2}}\frac{2n_{j-2}}{n_{j-3}}\frac{2n_{j-3}}{n_{j-4}}\cdots\frac{2n_{2}}{n_1}<\frac{9r^22^{j-3}n_{j-2}}{a_{j-1}a_{j-2}}<\frac{r^22^{j+1}n_{j-2}}{a_{j-1}a_{j-2}}<\frac{r^2}{a_{j-2}},$$ by the choice of $a_i$ in Lemma \ref{seq}. Now we have that $\displaystyle\lim_{r \to \infty} a_{j-2} = \infty$ by the choice of $j=j(r)$ as in Lemma \ref{rrr}. Therefore, $\#\{n : l(n) \leq r\}$ is $o(r^2)$.

\end{proof}

 \subsection{Producing Bounds on Relative Growth}

We now introduce some notation and lemmas towards proving Theorem \ref{cm} Part $(2)$ in the case where the finitely generated subgroup $H$ is cyclic.

That is, we aim to prove the following Lemma.

\begin{lemma}\label{cm2}
For any virtually infinite cyclic subgroup $H$ of a finitely generated group $G$ such that $g_H^G(r)$ is $o^e(r^2)$, we have that $\Delta_H^G(r)$ is bounded above by a recursive function.
\end{lemma}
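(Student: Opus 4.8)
The plan is to reduce to the case $H = \langle a\rangle_\infty$ infinite cyclic, which loses nothing up to $\approx$ by Lemma \ref{findex}, and then to show that the hypothesis $g_H^G(r) = o^e(r^2)$ gives an \emph{algorithm} that, on input $r$, produces an upper bound for $\Delta_a^G(r)$. By Lemma \ref{ygs} we already know $\Delta_a^G(r) \succeq g_a^G(r)$, but that is the wrong direction; instead I would exploit the quantitative content of Lemma \ref{erlr}. The key geometric observation is a pigeonhole/counting argument: if $\Delta_a^G(r)$ is large, say $\ge N$, then $a^0, a^1, \dots, a^N$ are $N+1$ distinct elements all of $G$-length $\le$ something controlled, and more usefully, combining short powers one can force many distinct short elements. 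Concretely, if $|a^m|_G \le r$ with $m$ large, then for $k \approx R/r$ the element $a^{km}$ has length $\le R$, so $g_a^G(R) \ge R/r$ roughly; that only gives linear growth. To get the quadratic obstruction one iterates: pick the \emph{largest} $m = m(r)$ with $|a^m|_G \le r$ (this is $\Delta_a^G(r)$), and then among $a^0, a^m, a^{2m}, \dots$ together with all $a^j$, $0\le j \le$ (something), one builds $\asymp t^2$ distinct elements inside a ball of radius $\asymp tr$.

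Here is the counting I would carry out. Fix $r$ and let $N = \Delta_a^G(r)$, so $|a^N|_G \le r$ and $|a^m|_G > r$ for $|m| > N$. For any $t \ge 1$, every element $a^j$ with $|j| \le tN$ can be written $a^{qN + j'}$ with $|q| \le t$ and $0 \le j' < N$, hence $|a^j|_G \le |q|\cdot r + \max_{0\le j'<N}|a^{j'}|_G \le tr + r = (t+1)r$ — wait, the correction term is not bounded by $r$ in general, so instead I would argue: every $a^j$ with $|j| < N$ has $|a^j|_G \le r$ trivially (since $N$ is the max), hence $a^j$ for $|j| \le tN$ has $|a^j|_G \le (t+1)r$. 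Thus $g_a^G((t+1)r) \ge 2tN + 1 \ge 2tN$. Choosing $t$ as large as we like and writing $R = (t+1)r$, this says $N \le \frac{g_a^G(R)}{2t} = \frac{g_a^G(R)\, r}{2(R-r)} \le \frac{g_a^G(R)\, r}{R}$ for $R \ge 2r$. So $\Delta_a^G(r) = N \le \min_{R \ge 2r} \frac{r\, g_a^G(R)}{R}$. Now invoke $g_a^G(R) = o^e(R^2)$: there is an algorithm that, given any constant $C$, produces $N_0(C)$ with $g_a^G(R) \le C R^2$ for all $R > N_0(C)$; taking $C$ fixed (say $C=1$ after absorbing constants into the $\approx$) and $R = \max(2r, N_0(1)+1)$ gives $\Delta_a^G(r) \le r \cdot \frac{R^2}{R} = rR$, which is a computable function of $r$ (the map $r \mapsto N_0(1)$ is a single fixed constant, and $r \mapsto \max(2r, N_0(1)+1)$ is recursive). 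Hence $\Delta_a^G(r)$ is bounded above by a recursive function. The translation back to $\Delta_H^G$ for $H$ virtually cyclic is immediate from Lemma \ref{findex}.

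The main obstacle I anticipate is handling the additive ``remainder'' term $\max_{0 \le j' < N}|a^{j'}|_G$ cleanly, and more seriously, the subtlety that $g_a^G$ itself is not a priori a computable function — the hypothesis only says its effective limit behavior, so I must be careful that the \emph{bound} I extract is recursive even though $g_a^G$ may not be. The point is that I never need to compute $g_a^G(R)$: I only need the \emph{upper bound} $g_a^G(R) \le CR^2$ for $R$ past an effectively-computable threshold, and then $\Delta_a^G(r) \le rR \le rCR^2/R$ with $R$ chosen recursively in $r$. So the output function $r \mapsto Cr\cdot\max(2r, N_0(C)+1)$ is manifestly recursive. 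I would also need to double-check the inequality direction in ``$N$ is the max so $|a^j|_G \le r$ for $|j| \le N$'' — this is exactly the definition of $\Delta_a^G(r)$ provided $a$ is in the generating set, which we may assume. A secondary point worth stating explicitly: one should take $r$ large enough that $2r > N_0(C)$ is not required — rather $R := \max(2r, N_0(C)+1)$ always works, and the resulting bound $rR$ is recursive and, for $r$ large, equals $2r^2$, consistent with Lemma \ref{cm3} which forces at least linear (indeed here we only get a recursive, not linear, bound, matching Theorem \ref{s12}(3) in the non-effective case).
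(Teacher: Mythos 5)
Your argument has a fatal gap at the step ``every $a^j$ with $|j| < N$ has $|a^j|_G \leq r$ trivially (since $N$ is the max).'' The definition $N = \Delta_a^G(r) = \max\{|m| : |a^m|_G \leq r\}$ says only that \emph{some} $m_0$ with $|m_0| = N$ satisfies $|a^{m_0}|_G \leq r$, and that no larger exponent does; it does \emph{not} say that the set $\{m : |a^m|_G \leq r\}$ is the whole interval $[-N,N]$. In a distorted embedding this set is typically a very sparse subset of $[-N,N]$: indeed, the counting inequality you derive from this claim, $g_a^G((t+1)r) \geq 2tN$, would with $t=1$ give $\Delta_a^G(r) \preceq g_a^G(2r)$, i.e.\ that distortion is always dominated by relative growth for cyclic subgroups. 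That is refuted by this very paper: Theorem \ref{ellp}(1) produces exponential distortion with relative growth only $2^{\sqrt{r}}$, and Theorem \ref{cm}(3) produces non-recursive distortion with relative growth $o(r^2)$. Your conclusion that $\Delta_a^G(r) \leq rR \approx 2r^2$ for large $r$ is likewise too strong; the lemma only asserts a recursive bound, and the bound actually obtained is the effective modulus coming from the $o^e$ hypothesis, not a polynomial.

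The reason the true proof is harder is exactly that a single deep power $a^N$ contributes essentially nothing to the count of elements in a ball. The paper argues by contradiction: assuming $\Delta_a^G$ is \emph{not} recursively bounded, Lemma \ref{waffl} shows that for every function $\phi$ with effective limit infinity there are infinitely many $n$ with $20\,l(n) < \phi(n)$ (where $l(n) = |a^n|_G$); otherwise $l$ would have effective limit infinity and its effective modulus $N(\cdot)$ would be a recursive bound on the distortion. One then extracts a rapidly growing sequence $n_1 < n_2 < \cdots$ with $a_i = l(n_i)$ controlled (Remark \ref{na}) and counts the sums $k_1 n_1 + \cdots + k_i n_i$ with coefficients in the ranges \eqref{coeffs}: Lemma \ref{pbj} shows these sums are pairwise distinct and Lemma \ref{wrl} shows they all have $l$-value at most $r = n_i$, yielding at least $r^2/(20 a_i) > r^2/\phi(n_i)$ elements in the ball of radius $r$. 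Since $\phi$ was an arbitrary function with effective limit infinity, this contradicts $g_H^G(r) = o^e(r^2)$. Note that the quadratic lower bound comes from multiplying contributions across the whole tower $n_1, \ldots, n_i$, not from one exponent; to repair your proof you would need to replace the single element $a^N$ by such a tower, at which point you are essentially reconstructing the paper's argument.
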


Let $H = \langle a \rangle \leq G$ where $G$ is finitely generated. Without loss of generality, we may include $a$ in the finite set of generators of $G$. Consider the length function corresponding to the embedding given by $l : \mathbb{Z} \rightarrow \mathbb{N} : l(r) = |a^r|_G.$

\begin{lemma}\label{waffl}
Suppose that $\phi(r)$ is a function with effective limit infinity, and that the distortion function $\Delta_H^G(r)$ is not bounded from above by any recursive function. Then we cannot have $20 l(n) \ge \phi(n)$ for almost all $n$.
\end{lemma}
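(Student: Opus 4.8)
The plan is to argue by contradiction: assume both that $\Delta_H^G$ is not bounded above by any recursive function and that $20l(n)\ge\phi(n)$ for almost all $n$, and derive a recursive bound on $\Delta_H^G$. The idea is that the hypothesis $20l(n)\ge\phi(n)$ for almost all $n$ means $l(n)$ itself grows to infinity effectively — since $\phi$ has effective limit infinity, given $C$ one can compute $N$ with $\phi(n)\ge 20C$ for $n>N$, hence $l(n)=|a^n|_G\ge C$ for all but finitely many $n$ (and the finitely many exceptions, being bounded in number, can be folded into the estimate). The point is that if $|a^n|_G\to\infty$ effectively, then from a given radius $r$ one can \emph{compute} a bound on $\max\{|n| : |a^n|_G\le r\}=\Delta_H^G(r)$: one finds $N$ such that $|a^n|_G > r$ for all $n>N$ (this requires knowing $\phi$ is computable and that the finite exceptional set is controlled), and then $\Delta_H^G(r)\le N$, which is a recursive function of $r$. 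This contradicts the assumption that $\Delta_H^G$ is not recursively bounded.

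First I would make precise the reduction from ``$20l(n)\ge\phi(n)$ for almost all $n$'' to ``$l(n)\to\infty$ effectively.'' Here one must be careful: $\phi$ having effective limit infinity gives an algorithm producing, for each $C$, a threshold $N(C)$ beyond which $\phi(n)\ge C$. Combined with $20l(n)\ge\phi(n)$ for $n>N_0$ (some fixed constant), we get $l(n)\ge C/20$ for $n>\max(N_0,N(C))$. The subtlety is that $N_0$ is a constant that exists but may not be known; however, for the membership/distortion application what matters is that \emph{some} recursive bound exists, and $N_0$ contributes only an additive constant, so a recursive bound still results. I would spell this out: define $M(r)=\max(N_0, N(20r))$ where we interpret things so that $|a^n|_G > r$ for $n>M(r)$, hence $\Delta_H^G(r)\le M(r)$.

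Then I would invoke the standard fact — cited in the excerpt via \cite{gromov}, \cite{farb}, and used throughout Section \ref{swz} — that an algorithmically-produced bound on $\Delta_H^G(r)$ is exactly what it means for $\Delta_H^G$ to be bounded above by a recursive function. Since $M(r)$ is computed by an algorithm from $r$ (using the algorithm witnessing that $\phi$ has effective limit infinity, plus the constant $N_0$), the function $r\mapsto M(r)$ is recursive, and $\Delta_H^G\le M$, contradicting the hypothesis. This completes the contradiction and proves the lemma.

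The main obstacle I anticipate is handling the unknown constant $N_0$ (the threshold beyond which $20l(n)\ge\phi(n)$ holds) and the finitely many small-$n$ exceptions cleanly, so that the resulting bound is genuinely recursive rather than merely ``recursive up to a non-effective constant.'' The resolution is that a function dominated by a recursive function \emph{plus a constant} is still dominated by a recursive function, and the notion of ``bounded above by a recursive function'' is insensitive to such additive constants; once this is noted the argument goes through. A secondary point to check is that $\phi$ itself (or at least its effective-limit-infinity certificate) is available as input — but this is exactly the content of ``$\phi$ has effective limit infinity,'' which supplies the algorithm computing $N(\cdot)$, so no extra hypothesis is needed.
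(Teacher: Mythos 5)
Your proposal is correct and follows essentially the same route as the paper: assume the domination $20l(n)\ge\phi(n)$ holds for almost all $n$, conclude that $l(n)=|a^n|_G$ has effective limit infinity, and hence that $r\mapsto N(r)$ gives a recursive upper bound for $\Delta_H^G$, contradicting the hypothesis. The extra care you take with the non-effective threshold $N_0$ is a reasonable elaboration of a point the paper leaves implicit, but it does not change the argument.
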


\begin{proof}
Suppose by way of contradiction that $20 l(n) \geq \phi(n)$ for all $n>N$ and some $N$. Then the effective limit of $l(n)$ is also infinity, and so given any $C$, one can effectively compute $N(C)$, such that $l(n) >C$ for any $n\ge N(C)$. But this means that the distortion function $\Delta_H^G(r)$ is bounded from above by the recursive function $N(r)$ of $r$, a contradiction.
\end{proof}

\begin{rem}\label{na}
By Lemma \ref{waffl}, there exists an infinite sequence $n_1=1< n_2< n_3<\dots$  such that $20 l(n_i)< \phi(n_i).$
In addition we may assume by choosing a subsequence that for all $i>1$ we have 
\begin{itemize}
\item $n_i>i^2(n_1+\cdots+n_{i-1})$. 
\item $n_{i+1} > 6n_i^2$
\item $a_{i+1} \geq a_i$ where $a_j=l(n_j)$ for $j=1,2,...$
\end{itemize}
\end{rem}
 
Fix $i \in \mathbb{N}$. Let $r=n_i$. Our current goal is to obtain a lower bound for the number of integers $n$ with $l(n) \leq r$. In order to compute this, we will consider for the moment only numbers $n$ of the form $n=k_1n_1+...+k_in_i$
where 
\begin{equation}\label{coeffs}
0\le k_j < \frac{n_{j+1}((j+1)^2-1)}{n_j(j+1)^2}, \textrm{ for } j \in \{1,\dots,i\}.
\end{equation}

\begin{lemma}\label{pbj}
Different coefficients satisfying the condition of Equation \eqref{coeffs} define diferent sums $k_1n_1+\cdots + k_in_i$.
\end{lemma}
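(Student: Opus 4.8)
The plan is to show that a sum $n = k_1 n_1 + \cdots + k_i n_i$, with the coefficients constrained as in \eqref{coeffs}, is uniquely recovered from its coefficient vector $(k_1,\dots,k_i)$. First I would observe that the bound \eqref{coeffs} gives $0 \le k_j n_j < n_{j+1}$ for each $j$, so each individual term $k_j n_j$ is strictly smaller than $n_{j+1}$; more precisely $k_j n_j \le n_{j+1}\big(1 - \tfrac{1}{(j+1)^2}\big)$. The intended mechanism is a "mixed-radix" / carry-free argument: because the $n_j$ grow fast and $n_j \mid n_{j+1}$ (from Lemma \ref{seq}, inherited through the setup), the top term $k_i n_i$ dominates the tail $k_1 n_1 + \cdots + k_{i-1} n_{i-1}$, so $k_i$ is determined by $n$, and then one peels off $k_i n_i$ and induces downward.

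The key steps, in order, are: (1) Establish the tail estimate: using the first bullet of Remark \ref{na}, $n_j > j^2(n_1 + \cdots + n_{j-1})$, together with the coefficient bound \eqref{coeffs}, show that $k_1 n_1 + \cdots + k_{j-1} n_{j-1} < n_j$ for each relevant $j$. Indeed $\sum_{t<j} k_t n_t < \sum_{t<j} n_{t+1} = n_2 + \cdots + n_j$, and one checks that $n_2 + \cdots + n_{j-1}$ is small compared to the "room" $n_j/(j+1)^2$ left below the maximal value of $k_{j-1}n_{j-1}$; this is exactly the purpose of the $(j+1)^2$-denominators in \eqref{coeffs} and of the first bullet of Remark \ref{na}. (2) Conclude that if two coefficient vectors $(k_j)$ and $(k_j')$ give the same $n$, then comparing them modulo $n_i$ (using $n_{i-1}\mid n_i \mid \cdots$, so all of $n_1,\dots,n_{i-1}$ divide $n_i$... wait, one only has $n_{j}\mid n_{j+1}$, hence $n_j \mid n_i$ for all $j \le i$) forces $k_i n_i \equiv k_i' n_i$, but since both $k_i n_i, k_i' n_i$ lie in $[0, n_{i+1})$ and, after subtracting the tails which are $< n_i$, the difference is a multiple of $n_i$ bounded in absolute value by... here I would instead argue directly: from Step (1), $n - k_i n_i \in [0, n_i)$, so $k_i = \lfloor n/n_i \rfloor$ is determined; then replace $n$ by $n - k_i n_i$ and repeat. (3) Downward induction on $i$ completes the uniqueness.

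The main obstacle I anticipate is bookkeeping the inequalities in Step (1) so that the strict bound $\sum_{t<j}k_t n_t < n_j$ genuinely follows from \eqref{coeffs} and Remark \ref{na} rather than from a circular appeal — in particular making sure that the upper limit for $k_{j-1}$, namely just under $\frac{n_j}{n_{j-1}}\big(1-\frac{1}{j^2}\big)$, leaves enough slack $\frac{n_j}{j^2}$ to absorb $k_1n_1+\cdots+k_{j-2}n_{j-2} < n_{j-1}+\cdots$, which needs $n_{j-1} + \cdots + n_2 < n_j/j^2$; this is where $n_j > j^2(n_1+\cdots+n_{j-1})$ is used, and I would just quote it. Everything else is routine: monotonicity of the construction and the divisibility $n_j \mid n_{j+1}$ handle the rest, so I would keep the writeup to the tail estimate plus the peeling-off induction.
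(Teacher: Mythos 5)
Your proposal is correct and follows essentially the same route as the paper: everything rests on the tail estimate $k_1n_1+\cdots+k_{j-1}n_{j-1}<n_j$, obtained exactly as you describe from the slack $n_j/j^2$ in \eqref{coeffs} together with the first bullet of Remark \ref{na}; the paper merely phrases the conclusion contrapositively (a nontrivial relation $mn_j=m_{j-1}n_{j-1}+\cdots+m_1n_1$ with $|m_s|<(1-\frac{1}{(s+1)^2})\frac{n_{s+1}}{n_s}$ would force $mn_j<n_j$) rather than as your greedy peeling of $k_i=\lfloor n/n_i\rfloor$. One small correction: the divisibility $n_j\mid n_{j+1}$ you briefly reach for belongs to the sequence of Lemma \ref{seq}, not to the sequence of Remark \ref{na} used here, but since your final argument does not rely on it, nothing is lost.
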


\begin{proof}
This is true because otherwise for some $j \leq i$ and $m>0$, we will have $m n_j = m_{j-1}n_{j-1}+\cdots+m_1n_1$ where for each $s \leq j$, $|m_s| <(1-\frac{1}{(s+1)^2})\frac{n_{s+1}}{n_s}$ by the choice of coefficients in Equation \eqref{coeffs}. Then we have that $$m n_j < n_j\bigg(1-\frac{1}{j^2}\bigg)+ n_{j-1}\bigg(1-\frac{1}{(j-1)^2}\bigg) +\cdots + n_{2}\bigg(1-\frac{1}{(2)^2}\bigg) < $$ $$n_j\bigg(1-\frac{1}{j^2}\bigg)+\frac{n_j}{j^2}=n_j$$ by the choice of $n_j$, a contradiction.
\end{proof}

\begin{lemma}\label{wrl}
If we assume in addition to \eqref{coeffs} that, for $i>1$, $$k_{i-1}<\frac{n_i}{3n_{i-1}}, k_i\le \frac{r}{3a_i} \textrm{ and } r = n_i,$$ then we have that $l(n) \leq r$.
\end{lemma}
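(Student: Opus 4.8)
The plan is to bound $l(n)$ directly by exhibiting a single good expression for $n$ in the sense of Equation \eqref{ellfn}, namely the expression $n = k_1 n_1 + \cdots + k_i n_i$ itself, and then estimating the associated cost $|k_1| a_1 + \cdots + |k_i| a_i$ against $r = n_i$. Since all the $k_j$ here are nonnegative, the cost is exactly $\sum_{j=1}^i k_j a_j$, and this is an upper bound for $l(n)$ because $l(n)$ is defined as a minimum over all such expressions (each $k_j n_j$ can be written as $\pm n_j \pm \cdots \pm n_j$ with $k_j$ terms).

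First I would split off the top two terms: write $\sum_{j=1}^i k_j a_j = k_i a_i + k_{i-1}a_{i-1} + \sum_{j=1}^{i-2} k_j a_j$. The term $k_i a_i$ is controlled by the new hypothesis $k_i \le \frac{r}{3a_i}$, giving $k_i a_i \le r/3$. For the term $k_{i-1}a_{i-1}$, I would combine the new hypothesis $k_{i-1} < \frac{n_i}{3 n_{i-1}}$ with the third bullet of Remark \ref{na} (or Lemma \ref{seq}), which gives $n_{i-1} a_i \ge n_{i-1} a_{i-1} \cdot(\text{something})$; more directly, since $a_{i-1} \le a_i$ and $n_i = r$, we get $k_{i-1} a_{i-1} \le k_{i-1} a_i < \frac{n_i}{3n_{i-1}} a_i$, and then I would invoke the bound $n_i > n_{i-1} a_i / a_{i-1}$ (third bullet of Lemma \ref{seq}), rearranged as $a_i < n_i a_{i-1}/n_{i-1}$, to conclude $k_{i-1} a_{i-1} < \frac{n_i}{3 n_{i-1}} \cdot \frac{n_i a_{i-1}}{n_{i-1}}$ — hmm, this needs care; the cleaner route is probably $k_{i-1} a_{i-1} \le \frac{n_i}{3n_{i-1}} a_{i-1}$ and then use $n_{i-1} a_i \ge $ (a lower bound) to get this at most $r/3$. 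The remaining tail $\sum_{j=1}^{i-2} k_j a_j$ should be bounded using the coefficient constraints \eqref{coeffs}, $k_j < \frac{n_{j+1}}{n_j}\cdot\frac{(j+1)^2-1}{(j+1)^2} < \frac{n_{j+1}}{n_j}$, together with $a_j \le a_{i}$ and crucially the rapid growth $a_{j+1} \ge 2^{j+3} n_j$ from Lemma \ref{seq}, so that $k_j a_j \le \frac{n_{j+1}}{n_j} a_j$; summing a geometric-type series and using $n_i > i^2(n_1 + \cdots + n_{i-1})$ (first bullet of Remark \ref{na}) should bring this tail below $r/3$ as well. Adding the three pieces gives $l(n) \le r/3 + r/3 + r/3 = r$.

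The main obstacle I anticipate is getting the tail estimate $\sum_{j=1}^{i-2} k_j a_j \le r/3$ to close cleanly: one has to track how $a_j$ compares to $n_j$ and $n_{j+1}$, since $a_j$ can itself be large, and the coefficient bound only controls $k_j$ in terms of the ratio $n_{j+1}/n_j$, not in terms of $a_j$. The resolution should be that $a_{j+1} \ge 2^{j+3} n_j \ge 2^{j+3} n_j$, so each term $k_j a_j < \frac{n_{j+1}}{n_j} a_j$, and since the $n$-sequence and the constraint $n_i > i^2 \sum_{j<i} n_j$ force very rapid growth, one can telescope; alternatively one shows each $k_j a_j$ is dominated by $k_{j+1}a_{j+1}$-type quantities. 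I would be prepared to strengthen the auxiliary inequalities in Remark \ref{na} (choosing a further subsequence) if the bookkeeping requires it, since those bullets were introduced precisely to make such estimates go through.
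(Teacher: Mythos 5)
Your overall architecture is the right one and matches the paper's: exhibit the single expression $n=k_1n_1+\cdots+k_in_i$, bound $l(n)$ by $\sum_j k_ja_j$ via subadditivity, and split the sum into a head, a middle term, and a tail, each controlled by $r/3$ (the paper groups the first two pieces into a single $2r/3$). But there is a genuine gap, and it is exactly at the point you flag with ``hmm, this needs care.'' You are importing hypotheses from the wrong construction: the inequalities $a_i\ge 2^{i+2}n_{i-1}$ and $n_i>n_{i-1}a_i/a_{i-1}$ come from Lemma \ref{seq}, which builds the sequences for the \emph{counterexample} in Theorem \ref{cm} Part (3). In Lemma \ref{wrl} you are instead in the setting of Lemma \ref{cm2}: here $l(n)=|a^n|_G$, the $n_i$ come from Remark \ref{na}, and $a_j$ is \emph{defined} as $l(n_j)$. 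None of the Lemma \ref{seq} bullets are available, so your treatment of $k_{i-1}a_{i-1}$ does not close, and your tail estimate rests on growth properties of $a_j$ that are not hypotheses in this context.

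The missing ingredient is the elementary observation that, since $a$ is included in the generating set of $G$, one has $a_j=l(n_j)=|a^{n_j}|_G\le n_j$ for every $j$. This is what the ``lower bound'' you were searching for reduces to: it gives $k_{i-1}a_{i-1}<\frac{n_i}{3n_{i-1}}\,a_{i-1}\le\frac{n_i}{3}=\frac{r}{3}$ immediately, and it also controls the tail, since each term $k_ja_j<\frac{n_{j+1}}{n_j}a_j$ for $j\le i-2$ is then handled using only the bullets of Remark \ref{na}. The paper does this by showing $\frac{a_{j-1}n_j}{n_{j-1}}\le\frac{a_jn_{j+1}}{6n_j}$ (from $n_{j+1}>6n_j^2$ and $a_{j-1}\le a_j$), so the terms form a rapidly decreasing series dominated by $2\cdot\frac{a_{i-1}n_i}{3n_{i-1}}\le\frac{2r}{3}$. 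Without $a_j\le n_j$ somewhere in the argument, neither your middle term nor your tail can be bounded, so you should state and use that inequality explicitly rather than proposing to strengthen Remark \ref{na}.
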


\begin{proof}
Observe that the restrictions on $k_{i-1}$ and $k_i$ in the statement of the Lemma are stronger than the initial assumptions in \eqref{coeffs}. 
By the properties of $l$, the definition of $a_i$, by inequiality $\eqref{coeffs}$ and together with the additional assumptions stated in Lemma \ref{wrl}, we have that $$l(k_1n_1+\cdots+k_in_i)\le k_1a_1+\cdots+k_ia_i \leq \frac{a_1 n_2}{n_1}+\cdots+\frac{a_{i-2}n_{i-1}}{n_{i-2}}+\frac{a_{i-1} n_i}{3 n_{i-1}}+\frac{a_i r}{3a_i}.$$ We clearly have that $\frac{a_i r}{3a_i} = \frac{r}{3}$.

Also, because $a$ is a generator of $H$, we have that $a_{j}=l(n_{j}) = |a^{n_{j}}|_G \le n_{j},$ for any $j \in \{1, \dots, i\}$. Finally, observe that $$\frac{a_{j-1} n_j}{n_{j-1}}\leq \frac{a_jn_{j+1}}{6n_j}$$ for each $j \geq 2$ by Remark \ref{na} as well as the fact that $a_{j-1} \leq a_j$ for all $j$. Therefore, $$\frac{a_1 n_2}{n_1}+\cdots+\frac{a_{i-2}n_{i-1}}{n_{i-2}}+\frac{a_{i-1} n_{i}}{3 n_{i-1}} \leq (\frac{1}{2^{i-2}}+\cdots+\frac{1}{2}+1)\frac{a_{i-1}n_i}{3n_{i-1}} < 2\cdot \frac{n_i}{3} = \frac{2r}{3}.$$ Therefore, $l(n) \leq \frac{2r}{3}+\frac{r}{3}=r$.
\end{proof}

We now proceed with the proof of Lemma \ref{cm2}.

\begin{proof}
Let $r=n_i$ be fixed. Lemmas \ref{pbj} and \ref{wrl} together with the choice of $k_1, \dots, k_i$ imply that the number of $n$'s with $l(n) \leq r$ is at least $$\left(1-\frac{1}{4}\right)\frac{n_2}{n_1}\left(1-\frac{1}{9}\right)\frac{n_3}{n_2}\cdots \left(1-\frac{1}{{(i-1)}^2}\right)\frac{n_{i-1}}{n_{i-2}}\left(\frac{1}{3}\frac{n_i}{n_{i-1}}\right)\frac{n_i}{3a_i} >$$ $$\prod_{j=2}^{i-1}\left(1-\frac{1}{j^2}\right)\frac{n_i}{3a_i} \frac{n_i}{3} =\prod_{j=2}^{i-1}\left(1-\frac{1}{j^2}\right)\frac{r^2}{9a_i} >\frac{r^2}{20a_i}.$$ This follows because $r=n_i$ and the product $\displaystyle\prod_{j=2}^ {\infty}(1-\frac{1}{j^2})$ converges to $\frac{1}{2}$. Hence the value of the corresponding relative growth function of $\z$ at $r = n_i$ is at least $$\frac{r^2}{20 a_i} = \frac{r^2}{20 l(n_i)} > \frac{r^2}{\phi(n_i)}$$ by the choice of $n_i$. Thus this function $g_{\z}^G(r)$ is not bounded from above by any $\frac{r^2}{\phi(r)}$ where $\phi$ is a function with effective limit infinity, because the $\phi$ we started with was arbitrary, a contradiction.
\end{proof}

Theorem \ref{cm} Part $(1)$ follows by Equation \eqref{ng} together with Lemma \ref{cm3}. Part (2) follows by Lemma \ref{cm2} in conjunction with Lemma \ref{cm3}.


\begin{proof} of Theorem \ref{cm} Part $(3)$:

 We will show that there exists a cyclic subgroup $H$ of a two generated group $G$ such that $\Delta_H^G(r)$ is not bounded above by any recursive function, yet $g_H^G(r)$ is $o(r^2)$. 

Consider the function $l$ of Equation \eqref{ellfn}. Lemma \ref{fpt} and Theorem \ref{olshc} impliy that there is an embedding of a cyclic subgroup $\langle a \rangle$ into a finitely-generated (solvable) group  $G$ with $|a^n|_G=\Theta(l(a^n))$.

 By the choice of the sequences $\{a_i\}, \{n_i\}$ the embedding has distortion function which is not bounded by a recursive function. This follows because $\Delta_{\langle a \rangle}^G(a_i) \geq n_i$ by definition, and by the construction from Lemma \ref{seq}, there is no recursive function satisfying this property. 
 
To complete the proof, it suffices to observe that the relative growth function is equivalent to $f(r) = \#\{n : l(n) \leq r\}$ is $o(r^2)$, by Lemma \ref{fpt}.

\end{proof}

\section{Extending Length Functions}\label{elf}

Let  $H \le G$, where $G$ is generated by a finite set $X$, and $H$ is generated by a finite set $Y$. Suppose that we have an arbitrary function $l : H \rightarrow \{0,1,2,\dots,\} $ satisfying the $(D)$ condition. In this section we will explore the question of when the function $l$ can be extended to $G$.

Here we introduce the funtion which will serve as the desired extension. The details are similar to those in the proof of Theorem \ref{2.2} Part $(2)$. Let $g \in G$. Then $g$ may be written as a product  $g= x_1 \cdots x_k$, where  $x_i$ is either an element of $X^{\pm 1}$ or  $x_i \in  H$.  For this factorization $P$ of $g$, we define
$L(g,P)= w(x_1)+\cdots+w(x_k)$, where $w(x_i)=1$  for the generators
from $X^{\pm 1}$  and  $w(x_i)=l (x_i)$  for  $x_i\in H.$
Finally, let \begin{equation}\label{bsvd}
L(g) =\min_P L(g,P) \textrm{ where $P$ varies over all such factorizations
of  $g$} 
\end{equation}

First we will show that $L$ defined in Equation \eqref{bsvd} is a length function. Let $g, h \in G$. Then for the specific factorizations $P$ of $g=x_1 \cdots x_k$ and $Q$ of $h=y_1 \cdots y_r$ satisfying $L(g)=L(g,P)$ and $L(h)=L(h,Q)$, we have the factorization $R$ of $gh$ given by $x_1 \cdots x_k y_1 \cdots y_r$. For this factorization $R$ we have $L(gh) \leq L(gh,R) = w(x_1)+\cdots+w(x_k)+w(y_1)+\cdots+w(y_r) = L(g,P)+L(h,Q) = L(g)+L(h)$.

Now, given a factorization $P$ of $g=x_1 \cdots x_k$ witnessing $L(g)=L(g,P)$ we have the factorization $Q$ of $g^{-1}$ given by $x_k^{-1} \cdots x_{1}^{-1}$. Then $L(g) = w(x_1)+\cdots+w(x_k) = w(x_1^{-1})+\cdots+w(x_k^{-1}) = L(g^{-1},Q) \geq L(g^{-1})$. By symmetry, $L(g)=L(g^{-1})$.

The following auxilliary definition will be a tool to prove that condition $(D3)$ is satisfied. Let $r \in \mathbb{N}$. We will compute $\# \{g \in G: L(g) \leq r\}$. 
\begin{defn}
Let $g \in G$ have $L(g) \leq n$. Let $L(g) = L(g,P)$ where $P$ is given by $g=x_1 \dots x_k$. We define the type $\tau(P)$ of the product $P$ as follows. One separates all the factors $x_1,\dots,x_k$ by commas, preserves all the factors $x_i \in X^{\pm 1}$, and replaces every $x_i\in H$ by the string $\$\dots\$ $ of length $l(x_i)$. 
\end{defn}
Thus $\tau(P)$ is a word over a finite alphabet (which includes the comma sign). Let $a$ be the number of letters in this finite alphabet. We will compute the number of choices for $g$ by multiplying the number of types $\tau(P)$ arising from a factorization $P$ with $L(g,P) \leq r$ by how many of each such type there are.

First, let us compute the number of types. Let $\tau(P)$ be any type arising from a factorization $P$ of $g$ with $L(g,P) \leq r$. The length of the type $\tau(P)$ is at most $2r$, and so the number of types of the products $P$ with $r$ factors is bounded by $a^{2r}$, a function equivalent to $2^r$. 

If the type is fixed, then to obtain a product $P$ of this type, one must replace the substrings  $\$\dots\$$ by elements of $H$. For a given string of length $n_i$, we have that there are at most $\# \{ h \in H: l(h) \leq n_i \} \leq b^{n_i}$ products of this type, for some $b$ arising from the hypothesis that $l$ is a length function. Since $\sum n_i\le r,$ we have at most $ b^{n_1}b^{n_2}\cdots b^{n_k}\le b^r$  products of the same type. Finally we note that the product of two exponential functions is an exponential function, and so condition $(D3)$ is satisfied for $L$. 

\begin{proof} of Theorem \ref{kam} Part $(1)$:
We will show that the length function $L$ is in fact an extension of $l$, where $l$ is the usual length function on $H$, $l$ is special, and $H \leq G$ is undistorted. Let $h \in H$. Then one factorization $P$ is $h=h$, and so $L(h,P)=w(h)=l(h)$. Therefore, $L(h) \leq l(h)$. 

To obtain the reverse inequality, we will use that the embedding is undistorted and that $l$ is special. Because the embedding is undistorted, there exists a positive integer $c$ so that for any $h \in H$, 
\begin{equation}\label{undistorted}
|h|_Y \leq c|h|_X.
\end{equation}

Let $h \in H$ and consider the patrition $P$ of $h$ so that $L(h,P)=L(h)$, where $P$ is $h=x_1\dots x_k,$ where for each $i$, either $x_i \in X^{\pm 1}$ or $x_i \in H$. Because $l$ is special, we have that there exists a function as in Definition \ref{special}, so that for $h \in H$, $l(h) = \Theta f(|h|_Y).$ That is, there exists a constant $e$ so that $l(h) \leq ef(|h|_Y)$.  By Equation \eqref{undistorted}, $f(|h|_Y)\le f(c|h|_X),$ because $f$ is nondecreasing. Because $f$ is subadditive, $f(c|h|_X) \le cf(|h|_X).$ Using the given partition and that $f$ is subadditive and nondecreasing, we have that $$cf(|h|_X) \le c f(|x_1|_X+\dots+|x_k|_X) \le c (f(|x_1|_X)+\dots+f(|x_k|_X)).$$ For each $i$ we have that $f(|x_i|_X) \le D w(x_i)$ for some positive constant $D$. Indeed, if $x_i \in X^{\pm 1}$ then $w(x_i)=|x_i|_X=1$ and so we choose a constant $d$ so that $f(1) \leq d$. If $x_i \in H$, then by Equation \eqref{undistorted}, and the properties of $f$, we have $$f(|x_i|_X)\le f(c |x_i|_Y)\le cf(|x_i|_Y).$$ Because $l$ is special, there is a constant $k$ so that $cf(|x_i|_Y) \leq kc l(x_i)$. Finally, since $x_i \in H$ and by definition of $w$ we have that $kc l(x_i) = kcw(x_i).$ Letting $D=\max\{d,kc\}$ we see that $f(|x_i|_X) \le D w(x_i)$. 
Hence $$l(h)\le ecD(w(x_1)+\dots + w(x_k))= CL(P,h)=CL(h)$$
where $C=ecD$ does not depend on $h$, as required.

\end{proof}

\begin{lemma}\label{oio}
Let $H$ be a finitely generated group and let $l : H \rightarrow \{0,1,\dots\} $ be a length function that is not special. There is an increasing sequence of integers $n_k$ and pairs $a_k, b_k$ of elements in $H$ such that $|a_k|_Y=n_k\ge 1$, $|b_k|_Y \geq n_k$ and $$l(a_k)\ge k^3 l(b_k), k \geq 1.$$ 
\end{lemma}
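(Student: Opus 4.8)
The plan is to negate the definition of "special" and extract the sequence by a compactness/diagonal argument. Recall $l$ is special iff there is a subadditive nondecreasing $f$ with $l(h) = \Theta(f(|h|_Y))$ for all $h \in H$; so $l$ not special means: for every subadditive nondecreasing $f \colon \mathbb{N} \to \mathbb{N}$, the two-sided bound fails. The natural candidate for $f$ to compare against is the "best possible" one built from $l$ itself, namely $f_l(n) = \max\{ l(h) : h \in H,\ |h|_Y \le n \}$ (the maximum of $l$ over the ball of radius $n$). This $f_l$ is manifestly nondecreasing, and it is subadditive because $l$ is subadditive and $|gh|_Y \le |g|_Y + |h|_Y$: any $h$ with $|h|_Y \le m+n$ factors as a product of a word of length $\le m$ and a word of length $\le n$, so $l(h) \le f_l(m) + f_l(n)$. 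Also $l(h) \le f_l(|h|_Y)$ trivially. Hence the upper bound $l(h) \preceq f_l(|h|_Y)$ always holds, so the failure of speciality must come entirely from the \emph{lower} bound: there is \emph{no} constant $C$ with $f_l(|h|_Y) \le C\, l(h)$ for all $h$. Equivalently, $\sup_{h \in H,\, h \ne 1} f_l(|h|_Y) / l(h) = \infty$.

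From this I would extract, for each $k$, an element realizing a large ratio. Fix $k$. Since the supremum is infinite, pick $h \in H$ with $f_l(|h|_Y)/l(h) > k^3$; set $n_k = |h|_Y$ (note $n_k \ge 1$ since $h \ne 1$, and $l(h) \ge 1$). By definition of $f_l$, there is some $a_k \in H$ with $|a_k|_Y \le n_k$ and $l(a_k) = f_l(n_k) > k^3 l(h)$; in particular $l(a_k) > k^3 l(h) \ge k^3$, so $a_k \ne 1$ and actually $|a_k|_Y$ can be taken to be exactly the radius where the max is attained — but more care is needed to get $|a_k|_Y = n_k$ on the nose and to get the partner $b_k$ with $|b_k|_Y \ge n_k$. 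I would instead set things up so that $n_k$ is chosen as $|a_k|_Y$ directly: let $a_k$ achieve the maximum defining $f_l$ at the relevant radius, put $n_k := |a_k|_Y$, and let $b_k := h$ be the element that gave the large ratio, so $l(a_k) = f_l(n_k) \ge f_l(|b_k|_Y) / (\text{nondecreasing}) $— here I must ensure $|b_k|_Y \ge n_k$, which I arrange by noting $f_l$ is nondecreasing so I may always replace $b_k$ by a word of length at least $n_k$ with the same or smaller $l$-value without decreasing the ratio, or simply choose the witness $h$ from a large enough ball in the first place. Finally, to make $\{n_k\}$ strictly increasing, I use that $l$ takes only finitely many values on each ball (as $H$ is finitely generated each ball $B_H(n)$ is finite), so $f_l(n) \to \infty$ forces the witnesses to come from arbitrarily large balls; thus after passing to a subsequence we get $n_1 < n_2 < \cdots$. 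Then $l(a_k) \ge k^3 l(b_k)$ holds by construction.

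The main obstacle I anticipate is \emph{bookkeeping the two length parameters simultaneously}: we need $|a_k|_Y = n_k$, $|b_k|_Y \ge n_k$, and the ratio inequality $l(a_k) \ge k^3 l(b_k)$, all at once, and the three requirements pull in slightly different directions (the element with large $l$-value, $a_k$, should live in a \emph{small} ball, while $b_k$ should live in a \emph{large} ball). The resolution is the observation that non-speciality, after the reduction above, says exactly that small-ball elements can have $l$-value vastly exceeding that of large-ball elements; concretely, for every $k$ there exist a radius $n$, an element of length $\le n$ with $l$-value $f_l(n)$, and an element of length $\ge n$ (even length $= n$ would do, using monotonicity of $f_l$ in the other direction carefully) with $l$-value $< f_l(n)/k^3$. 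Making that "concretely" precise — i.e. showing that $\sup f_l(|h|_Y)/l(h) = \infty$ genuinely yields, for each $k$, a pair at the \emph{same} scale $n_k$ with $|b_k|_Y \ge n_k$ rather than at incomparable scales — is the one place that requires an honest (though short) argument, using monotonicity and subadditivity of $f_l$ to transfer the large-ratio witness up to the correct ball.
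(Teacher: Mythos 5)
Your proposal is correct and follows essentially the same route as the paper: the paper compares $l$ against the extremal envelopes $f_1(r)=\max\{l(h):|h|_Y=r\}$ and $f_2(r)=\min\{l(h):|h|_Y\ge r\}$, observes that boundedness of $f_1/f_2$ would make $l$ special, and takes $a_k,b_k$ to be the two extremizers at a radius $n_k$ where the ratio exceeds $k^3$. Your single envelope $f_l(n)=\max\{l(h):|h|_Y\le n\}$, with the large-ratio witness $h$ playing the role of $b_k$ and the ball maximizer playing the role of $a_k$, accomplishes the same thing (and your $f_l$ is visibly subadditive and nondecreasing, which is slightly cleaner than the paper's unverified subadditivity of $f_2$).
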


\begin{proof}
Consider two functions on the natural numbers defined as follows. Let $$f_1(r) = \max\{l(h) : |h|_Y =r\} \textrm{ and } f_2(r) = \min\{l(h) : |h|_Y \geq r\}.$$ If there exists a constant $C$ so that for all $r \in \mathbb{N}$ we have $\frac{f_1(r)}{f_2(r)} \leq C$ then for $h \in H$ we have $$f_2(|h|_Y) \leq l(h) \leq f_1(|h|_Y) \leq Cf_2(|h|_Y)$$ which implies that $f_2 \restriction_H = \Theta (l \restriction_H)$, contrary to our assumptions, since $f_2$ is also non-decreasing. Therefore, for any $k^3$ we have $n_k$ so that $f_1(n_k) > k^3 f_2(n_k)$. Thus for elements $a_k, b_k \in H$ with $|a_k|_Y=n_k, |b_k|_Y \geq n_k$ we have $l(a_k) \geq k^3 l(b_k)$ by definition of $f_1(n_k)=l(a_k)$ and $f_2(n_k)=l(b_k)$.
\end{proof}

\begin{proof} of Theorem \ref{kam} Part $(2)$:

As before, let $H$ be a finitely generated group, and let $l : H \rightarrow \{0,1,2,\dots,\} $ be any function satisfying the $(D)$ condition. Suppose that $l$ is not special.

We construct words in the alphabet $\{1,2\}$ $$v_k=\displaystyle\prod_{i=1}^k (1^i \cdot 2) =\epsilon_{k,1}\dots\epsilon_{k,m_k},$$ where $m_k = \frac{k(k+3)}{2}$ and each $\epsilon_{k,j} \in \{1,2\}$.

We define $G$ as the factor-group of the free product $H* \langle c \rangle$  over the normal
closure of the set $R=\{w_k\}_{k \geq 40}$ where $$w_k=a_kc^{\epsilon_{k,1}}b_kc^{\epsilon_{k,2}}b_k\dots c^{\epsilon_{k,m_k-1}}b_kc^{\epsilon_{k,m_k}}$$ and $a_k, b_k$ are the elements constructed in Lemma \ref{oio}.

Note that if $j-i\ge 2k-2$, then a subword $c^{\epsilon_{k,i}}b_kc^{\epsilon_{k,i+1}}b_k\dots c^{\epsilon_{k,j}}$ of the normal form $w_k$ is not equal in $H$ to a subword  $c^{\epsilon_{k',i'}}b_{k'}c^{\epsilon_{k',i'+1}}b_{k'}\dots c^{\epsilon_{k',j'}}$ unless $(k,i,j)= (k',i',j')$. This follows from the following observation. Any subword occurring twice in $v_k$ is a subword of $1^{k-2}21^{k-1}.$ In particular, every subword of length $2k-1$ is unique in $v_k$.

Hence the syllabic length of a piece ({\it cf.} \cite{ls} Chapter 5.9) is at most $2(2k-2)=4k-4$ while the syllabic length of $w_k$ is $2m_k=k(k+3).$ Since $k\ge 40,$ it follows that the presentation of $G$ as a quotient of the free product $H* \langle c \rangle$ satisfies the condition $C'(\frac{1}{10})$.

By the version of Greendlinger's lemma for free products (cf. \cite{ls} Ch. 5, Theorem 9.3) every
non-empty normal form of a word in $H* \langle c \rangle$ equal 1 in $G$ must contain a subword of a cyclic shift of some $w_k^{\pm 1}$ with syllabic length at least $(1-\frac{3}{10})k(k+3)$
(where $k(k+3)$ is the syllabic length of $w_k$). In particular such a normal
form must contain a non-trivial power of $c.$ Therefore $H$ is embedded in
$G$ by the canonical epimorphism $H* \langle c \rangle \to G.$

Next we will show that this embedding is undistorted. Indeed, if a geodesic
word $u$ in  $H$ is equal to a different geodesic word $u'$ in the generators of $H$ plus $c$,  then $u'$ must contain a subword $w'$, where $w'w''$ is a cyclic
shift of some $w_k^{\pm 1}$ and the syllabic length of $w'$ is greater then
$\frac{7}{10} k(k+3)$. It follows from the form of $w_k$ ($1\le\epsilon_{k,j}\le 2$, $|a_k|,|b_k|\ge k$) that the length of $w'$ in the generators of $H$ and $c$ is greater than
the length of $w''.$ Since $w'=w''^{-1}$ in $G$, $u'$ is not geodesic in $G$, a contradiction.

Consider the undistorted embedding of $H$ into the finitely generated group $G$ created above. It remains to show that $l$  does not extend to $G$.

Suppose by way of contradiction that there exists a function $L$ extending the function $l$ from $H$ to $G$ (up to $\Theta$ equivalence as in Definition \ref{theta}). Denote $L(c)=\alpha$.

 For the elements created in Lemma \ref{oio}, we have by hypothesis that the function is an extension that there exists a constant $\beta$ with $L(b_k)\le \beta l(b_k)$.  
Then for $k\ge 40$ we have
$$L(a_k)=  L(c^{\epsilon_{k,1}}b_kc^{\epsilon_{k,2}}b_k\dots c^{\epsilon_{k,m_{k}-1}}b_kc^{\epsilon_{k,m_k}})$$ by definition of $G$. Then $$L(c^{\epsilon_{k,1}}b_kc^{\epsilon_{k,2}}b_k\dots c^{\epsilon_{k,m_{k}-1}}b_kc^{\epsilon_{k,m_k}}) \leq \sum_{j=1}^{m_k} |\epsilon_{k,j}| \cdot L(c) +(m_{k}-1) L(b_k)$$ and by definition of $\epsilon_{k,j}, m_k$ and $L(c)$ as well as by Lemma \ref{oio} we have that $$\sum_{j=1}^{m_k} |\epsilon_{k,j}| \cdot L(c) + m_{k}L(b_k) \leq2m_k\alpha +m_{k}\beta l(b_k)$$ $$\leq k(k+3)(\alpha+\frac{\beta l(a_k)}{2k^3})\leq(\alpha+\beta)\frac{l(a_k)}{k}$$
Thus $L(a_k)=o(l(a_k))$, a contradiction.

\end{proof}

\begin{proof} of Theorem \ref{kam} Part $(3)$:
Let $H$ be distorted in $G$. Consider the function $l: H \rightarrow \mathbb{N}: l(h) = |h|_Y$. Then $l$ is special, and $l$ admits no extension to $G$. That is because if $L$ were such an extension, then $L=O(|g|_X)$ on $G$. Indeed, take a presentation of $g$ as a shortest product of generators $g=x_1\cdots x_k.$ Then $L(g) = L(x_1 \cdots x_k) \leq L(x_1) + \cdots + L(x_k) \leq ck =c|g|_X$ where $c=\max \{L(x):x \in X^{\pm1}\}$. If $l$ were equivalent to $L$ on $H$, then $H$ would be undistorted, a contradiction.
\end{proof}

\renewcommand{\abstractname}{Acknowledgements}
\begin{abstract}
The authors are grateful to Denis Osin for the valuable discussion and suggestions.
\end{abstract}

\bibliographystyle{abbrv} 
 \addtolength{\textwidth}{.7in}
\addtolength{\evensidemargin}{-0.35in}
\addtolength{\oddsidemargin}{-0.35in}
\addtolength{\textheight}{.5in}
\addtolength{\topmargin}{-.25in}

\end{document}